\numberwithin{equation}{section}
\newtheorem{theorem}{Theorem}[section]
\newtheorem{proposition}[theorem]{Proposition}
\newtheorem{corollary}[theorem]{Corollary}
\newtheorem{lemma}[theorem]{Lemma}
\newtheorem{observation}[theorem]{Observation}
\newtheorem{problem}[theorem]{Problem}
\theoremstyle{definition}
\newtheorem{defn}[theorem]{Definition}
\newtheorem{example}[theorem]{Example}
\newcommand{\comaj}{{\mathrm {comaj}}}
\newcommand{\maj}{{\mathrm {maj}}}
\newcommand{\inv}{{\mathrm {inv}}}
\newcommand{\sign}{{\mathrm {sign}}}
\newcommand{\Des}{{\mathrm {Des}}}
\newcommand{\Val}{{\mathrm {Val}}}
\newcommand{\Rise}{{\mathrm {Rise}}}
\newcommand{\Stir}{{\mathrm {Stir}}}
\newcommand{\Hilb}{{\mathrm {Hilb}}}
\newcommand{\grFrob}{{\mathrm {grFrob}}}
\newcommand{\des}{{\mathrm {des}}}
\newcommand{\asc}{{\mathrm {asc}}}
\newcommand{\coinv}{{\mathrm {coinv}}}
\newcommand{\rev}{{\mathrm {rev}}}
\newcommand{\Asc}{{\mathrm {Asc}}}
\newcommand{\Ind}{{\mathrm {Ind}}}
\newcommand{\SYT}{{\mathrm {SYT}}}
\newcommand{\Frob}{{\mathrm {Frob}}}
\newcommand{\initial}{{\mathrm {in}}}
\newcommand{\shape}{{\mathrm {shape}}}
\newcommand{\symm}{{\mathfrak{S}}}
\newcommand{\CC}{{\mathbb {C}}}
\newcommand{\QQ}{{\mathbb {Q}}}
\newcommand{\ZZ}{{\mathbb {Z}}}
\newcommand{\OP}{{\mathcal{OP}}}
\newcommand{\DDD}{{\mathcal{D}}}
\newcommand{\WWW}{{\mathcal{W}}}
\newcommand{\NNN}{{\mathcal{N}}}
\newcommand{\FFF}{{\mathcal{F}}}
\newcommand{\CCC}{{\mathcal{C}}}
\newcommand{\AAA}{{\mathcal{A}}}
\newcommand{\MMM}{{\mathcal{M}}}
\newcommand{\BBB}{{\mathcal{B}}}
\newcommand{\EGS}{{\mathcal{EGS}}}
\newcommand{\ED}{{\mathcal{ED}}}
\newcommand{\zz}{{\mathbf {z}}}
\newcommand{\xx}{{\mathbf {x}}}
\newcommand{\II}{{\mathbf {I}}}
\newcommand{\yy}{{\mathbf {y}}}
\newcommand{\TT}{{\mathbf {T}}}
\newcommand{\mm}{{\mathbf {m}}}
\newcommand{\blambda}{{ \bm{\lambda} }}
\newcommand{\bT}{{ \bm{T}}}
\newcommand{\bbeta}{{ \bm{\beta} }}
\begin{document}

\title[Generalized coinvariant algebras for wreath products]
{Generalized coinvariant algebras for wreath products}

\author{Kin Tung Jonathan Chan}
\address
{Department of Mathematics \newline \indent
University of California, San Diego \newline \indent
La Jolla, CA, 92093-0112, USA}
\email{joc090@ucsd.edu, bprhoades@math.ucsd.edu}

\author{Brendon Rhoades}

\begin{abstract}
Let $r$ be a positive integer and
let $G_n$ be the  reflection group  of $n \times n$ monomial matrices whose
entries are $r^{th}$ complex roots of unity and let $k \leq n$.  We define and study 
two new graded
quotients $R_{n,k}$ and $S_{n,k}$ of the polynomial ring $\CC[x_1, \dots, x_n]$
in $n$ variables.  When $k = n$, both of these quotients coincide with the classical coinvariant 
algebra attached to $G_n$.  
The algebraic properties of our quotients are governed by the combinatorial properties of 
$k$-dimensional faces in the Coxeter complex attached to $G_n$ (in the case of $R_{n,k}$)
and $r$-colored ordered set partitions of $\{1, 2, \dots, n\}$ with $k$ blocks
(in the case of $S_{n,k}$).
Our work generalizes a construction of Haglund, Rhoades, and Shimozono  from
the symmetric group $\symm_n$ to the more general wreath products $G_n$.
\end{abstract}

\keywords{Coxeter complex, coinvariant algebra, wreath product}
\subjclass{Primary 05E18, Secondary 05E05}
\maketitle

\section{Introduction}
\label{Introduction}

The coinvariant algebra of the symmetric group $\symm_n$ is among the most important
$\symm_n$-modules in combinatorics.   It is a graded version of the regular representation of 
$\symm_n$, has structural properties deeply tied to the combinatorics of permutations,
and gives a combinatorially
accessible model for the action of $\symm_n$ on the cohomology ring $H^{\bullet}(G/B)$
of the flag manifold $G/B$.    

Haglund, Rhoades, and Shimozono \cite{HRS} recently defined a generalization
of the $\symm_n$-coinvariant algebra which depends on an integer parameter $k \leq n$.
The structure of their graded $\symm_n$-module is governed by the combinatorics of 
ordered set partitions of $[n] := \{1, 2, \dots, n \}$ with $k$ blocks.
The graded Frobenius images of this module is  (up to a minor twist) either of the combinatorial
expressions $\Rise_{n,k}(\xx;q,t)$ or
$\Val_{n,k}(\xx;q,t)$ appearing in the {\em Delta Conjecture} of Haglund, Remmel, and Wilson \cite{HRW}
upon setting $t = 0$.  The Delta Conjecture is a generalization
of the Shuffle Conjecture in the field of 
Macdonald polynomials; this gives the first example of a `naturally constructed' module 
with Frobenius image related to
the Delta Conjecture.

A linear transformation $t \in GL_n(\CC)$ is a {\em reflection} if the fixed space of $t$ has codimension $1$
in $\CC^n$ and $t$ has finite order.  A finite subgroup $W \subseteq GL_n(\CC)$ is called a 
{\em reflection group} if $W$ is generated by reflections.
Given any complex reflection group $W$, there is a coinvariant algebra $R_W$ attached to $W$.
The algebra $R_W$ is a graded $W$-module with structural properties closely related to the combinatorics
of $W$.  In this paper we provide a Haglund-Rhoades-Shimozono style generalization of $R_W$
in the case where $R_W$ belongs to the family of reflection groups $G(r,1,n) = \ZZ_r \wr \symm_n$.

The general linear group $GL_n(\CC)$ acts on the polynomial ring 
$\CC[\xx_n] := \CC[x_1, \dots, x_n]$ by linear substitutions.
If $W \subset GL_n(\CC)$ is any finite subgroup,
let 
\begin{equation*}
\CC[\xx_n]^W := \{ f(\xx_n) \in \CC[\xx_n] \,:\, w.f(\xx_n) = f(\xx_n) \text{ for all $w \in W$} \}
\end{equation*}
denote the associated subspace of {\em invariant polynomials} and
let $\CC[\xx_n]^W_+ \subset \CC[\xx_n]^W$ denote 
the collection of invariant polynomials with vanishing constant term.
The {\em invariant ideal} $I_W \subset \CC[\xx_n]$ is 
the ideal $I_W := \langle \CC[\xx_n]^W_+ \rangle$ generated by $\CC[\xx_n]^W_+$ and the 
{\em coinvariant algebra} is $R_W := \CC[\xx_n]/I_W$.
The quotient $R_W$ is a graded $W$-module.
A celebrated result of Chevalley \cite{C} states that if $W$ is a complex reflection group,
then $R_W$ is isomorphic to the regular representation $\CC[W]$ as a $W$-module.

\begin{quote}
{\bf Notation.}  {\em Throughout this paper $r$ will denote a positive integer.  Unless otherwise stated, we 
assume $r \geq 2$.  Let $\zeta := e^{\frac{2 \pi i}{r}} \in \CC$ and let $G := \langle \zeta \rangle$ be the 
multiplicative 
group of $r^{th}$ roots of unity in $\CC^{\times}$.}
\end{quote}

Let us introduce the family of reflection groups we will focus on.
A  matrix is {\em monomial } if it has a unique nonzero entry in every row and column.
Let $G_n$ be the group of $n \times n$ monomial matrices whose nonzero entries lie in $G$.
For example, if $r = 3$ we have
\begin{equation*}
g = 
\begin{pmatrix}
0 & 0 & \zeta & 0 \\
0 & 1 & 0 & 0 \\
0 & 0 & 0 & \zeta^2 \\
\zeta & 0 & 0 & 0 
\end{pmatrix} \in G_4.
\end{equation*}
Matrices in $G_n$ may be thought of combinatorially as {\em $r$-colored permutations}
$\pi_1^{c_1} \dots \pi_n^{c_n}$, where $\pi_1 \dots \pi_n$ is a permutation in $\symm_n$ and
$c_1 \dots c_n$ is a sequence of `colors' in the set $\{0, 1, \dots, r-1\}$ representing powers of $\zeta$.
For example, the above element of $G_4$ may be represented combinatorially as
$g = 4^1 2^0 1^1 3^2$.  

In the usual classification of complex reflection groups we have $G_n = G(r,1,n)$.  The group
$G_n$ is isomorphic to the wreath product $\ZZ_r \wr \symm_n = (\ZZ_r \times \cdots \times \ZZ_r) \rtimes \symm_n$,
where the symmetric group $\symm_n$ acts on the $n$-fold direct product of cyclic groups
$\ZZ_r \times \cdots \times \ZZ_r$  by coordinate permutation.
For the sake of legibility, we suppress reference to $r$ in our notation for $G_n$ and related objects.

Let $I_n \subseteq \CC[\xx_n]$ be the invariant ideal associated to $G_n$.  We have
$I_n = \langle e_1(\xx_n^r), \dots, e_n(\xx_n^r) \rangle$, where 
\begin{equation*}
e_d(\xx_n^r) = e_d(x_1^r, \dots, x_n^r) := \sum_{1 \leq i_1 < \cdots < i_d \leq n} x_{i_1}^r \cdots x_{i_d}^r
\end{equation*}
is the $d^{th}$ elementary symmetric function in the variable powers $x_1^r, \dots, x_n^r$.
Let $R_n := \CC[\xx_n]/I_n$ denote the coinvariant ring attached to $G_n$.

The algebraic properties of the quotient $R_n$ are governed by the combinatorial properties of 
$r$-colored permutations in $G_n$.  
Chevalley's result \cite{C} implies that $R_n \cong \CC[G_n]$ as ungraded $G_n$-modules.
The fact that $e_1(\xx_n^r), \dots, e_n(\xx_n^r)$ is a regular sequence in $\CC[\xx_n]$ gives the 
following expression for the Hilbert series of $R_n$:
\begin{equation}
\Hilb(R_n; q) =  \prod_{i = 1}^n \frac{1-q^{ri}}{1-q}  = \sum_{g \in G_n} q^{\maj(g)},
\end{equation}
where $\maj$ is the {\em major index} statistic on $G_n$ 
(also known as the {\em flag-major index}; see \cite{HLR}).
Bango and Biagoli \cite{BB} described a {\em descent monomial basis}
$\{b_g \,:\, g \in G_n\}$ of $R_n$ whose elements satisfy $\deg(b_g) = \maj(b_g)$.
Stembridge \cite[Thm. 6.6]{Stembridge} described the graded $G_n$-module structure of $R_n$ 
using (the $r \geq 1$ generalization of) standard Young tableaux.

When $r = 1$ and $G_n = \symm_n$ is the symmetric group, Haglund, Rhoades, and Shimozono
\cite[Defn. 1.1]{HRS} introduced and studied a generalization of the coinvariant algebra $R_n$ depending
on a positive integer $k \leq n$.
In this paper we extend \cite[Defn. 1.1]{HRS} to $r \geq 2$ by introducing the following {\em two} families of ideals
$I_{n,k}, J_{n,k} \subseteq \CC[\xx_n]$.

\begin{defn}
\label{main-definition}
Let $n, k,$ and $r$ be nonnegative integers which satisfy $n \geq k, n \geq 1$, and $r \geq 2$.  
We define
two quotients of the polynomial ring $\CC[\xx_n]$ as follows.

\begin{enumerate}
\item  Let $I_{n,k} \subseteq \CC[\xx_n]$ be the ideal
\begin{equation*}
I_{n,k} := \langle x_1^{kr+1}, x_2^{kr+1}, \dots, x_n^{kr+1}, e_n(\xx_n^r), e_{n-1}(\xx_n^r), \dots, e_{n-k+1}(\xx_n^r) \rangle
\end{equation*}
and let $R_{n,k}$ be the corresponding quotient:
\begin{equation*}
R_{n,k} := \CC[\xx_n]/I_{n,k}.
\end{equation*}
\item  Let $J_{n,k} \subseteq \CC[\xx_n]$ be the ideal
\begin{equation*}
J_{n,k} := \langle x_1^{kr}, x_2^{kr}, \dots, x_n^{kr}, e_n(\xx_n^r), e_{n-1}(\xx_n^r), \dots, e_{n-k+1}(\xx_n^r) \rangle
\end{equation*}
and let $S_{n,k}$ be the corresponding quotient:
\begin{equation*}
S_{n,k} := \CC[\xx_n]/J_{n,k}.
\end{equation*}
\end{enumerate}
\end{defn}

Both of the ideals $I_{n,k}$ and $J_{n,k}$ are homogeneous and stable under the action of 
$G_n$ on $\CC[\xx_n]$.  It follows that the quotients $R_{n,k}$ and $S_{n,k}$ are graded 
$G_n$-modules.  The ring introduced in \cite[Defn. 1.1]{HRS} is the ideal $S_{n,k}$ with $r = 1$.

When $k = n$, it can be shown 
\footnote{By \cite[Sec. 7.2]{Bergeron} under the change of variables $(x_1, \dots, x_n) \mapsto (x_1^r, \dots, x_n^r)$
we have $x_n^{nr} \in I_n$, and the ideal $I_n$ is stable under $\symm_n$.}
that for any $1 \leq i \leq n$, the variable power $x_i^{nr}$ lies in the invariant ideal
$I_n$, so that $I_{n,n} = J_{n,n} = I_n$, and $R_{n,n} = S_{n,n}$ are both equal to the classical 
coinvariant algebra $R_n$ for $G_n$.
At the other extreme, we have $R_{n,0} \cong \CC$ (the trivial representation in degree $0$)
and $S_{n,0} = 0$.

The reader may wonder why we are presenting two generalizations of the ring of \cite{HRS} rather than one.
The combinatorial reason for this is the presence of {\em zero blocks} in the $G_n$-analog of ordered 
set partitions.  These zero blocks do not appear in the case of \cite{HRS} when $r = 1$ 
(or in the case of the classical coinvariant algebra when $k = n$).
Roughly speaking, the ring $S_{n,k}$ will be a `zero block free' version of  $R_{n,k}$.
These rings will be related in a nice way (see Proposition~\ref{r-to-s-reduction}), and  
 $S_{n,k}$ will be easier to analyze directly.
 
 The generators of the ideal $I_{n,k}$ defining the quotient $R_{n,k}$ come in two flavors:
 \begin{itemize}
 \item  high degree invariant polynomials $e_n(\xx_n^r), e_{n-1}(\xx_n^r), \dots, e_{n-k+1}(\xx_n^r)$, and
 \item  a collection of polynomials $x_1^{kr+1}, \dots, x_n^{kr + 1}$ whose linear span
 $\mathrm{span} \{x_1^{kr+1}, \dots, x_n^{kr+1} \}$ is stable under the action of $G_n$ and carries the 
 dual of the defining action of $G_n$ on $\CC^n$.
 \end{itemize}
 This extends the two flavors of generators for the ideal of \cite{HRS}.  In the context of the 0-Hecke
 algebra $H_n(0)$ attached to the symmetric group, Huang and Rhoades \cite{HuangRhoades}
 defined another ideal  (denoted in \cite{HuangRhoades} by $J_{n,k} \subseteq \mathbb{F}[\xx_n]$,
 where $\mathbb{F}$ is any field) with analogous types of generators: high degree $H_n(0)$-invariants
 together with a copy of the defining representation of $H_n(0)$ sitting in homogeneous degree $k$.
 It would be interesting to see if the favorable properties of the corresponding quotients
 could be derived from this choice of generator selection in a more conceptual way.

In this paper we will prove that the structures of the rings 
$R_{n,k}$ and $S_{n,k}$ are controlled by $G_n$-generalizations of ordered set 
partitions.  
We will use the usual $q$-analog notation
\begin{align*}
[n]_q := 1 + q + \cdots + q^{n-1} &   &[n]!_q := [n]_q [n-1]_q \cdots [1]_q  \\
{n \brack a_1, \dots , a_r}_q := \frac{[n]!_q}{[a_1]!_q \cdots [a_r]!_q} 
& &{n \brack a}_q := \frac{[n]!_q}{[a]!_q [n-a]!_q}.
\end{align*}
We also let $\rev_q$ be the operator which reverses the coefficient sequences in polynomials in the
variable $q$ (over any ground ring).
For example, we have
\begin{equation*}
\rev_q(8q^2 + 7q + 6) = 6q^2 + 7q + 8.
\end{equation*}
Let $\Stir(n,k)$ be the (signless) Stirling number of the second kind counting set partitions of $[n]$ into $k$ blocks
and let $\Stir_q(n,k)$ denote the {\em $q$-Stirling number} 
defined by the recursion 
\begin{equation*}
\Stir_q(n,k) = [k]_q \cdot \Stir_q(n-1,k) + \Stir_q(n-1,k-1) 
\end{equation*}
for $n, k \geq 1$ and the 
initial condition $\Stir_q(0,k) = \delta_{0,k}$.
Deferring various definitions to Section~\ref{Background}, we state our main results.

\begin{itemize}
\item  As {\em ungraded} $G_n$-modules we have
\begin{center}
$R_{n,k} \cong \CC[\FFF_{n,k}]$ and $S_{n,k} \cong \CC[\OP_{n,k}]$,
\end{center}
where $\FFF_{n,k}$ is the set of $k$-dimensional faces in the Coxeter complex attached
to $G_n$ and $\OP_{n,k}$ is the set of $r$-colored ordered set partitions of 
$[n]$ with $k$ blocks (Corollary~\ref{ungraded-isomorphism-type}).  
In particular, we have
\begin{align*}
\dim(R_{n,k}) &= \sum_{z = 0}^{n-k}  {n \choose z} \cdot r^{n-z} \cdot k! \cdot \Stir(n-z,k), \\
\dim(S_{n,k}) &= r^n \cdot k! \cdot \Stir(n,k).
\end{align*}

\item The Hilbert series $\Hilb(R_{n,k}; q)$ and $\Hilb(S_{n,k};q)$ are given by
(Corollary~\ref{hilbert-series-corollary})
\begin{align*}
\Hilb(R_{n,k}; q) &= \sum_{z = 0}^{n-k} {n \choose z}  \cdot q^{krz} \cdot 
\rev_q( [r]_q^{n-z} \cdot [k]!_{q^r} \cdot \Stir_{q^r}(n-z,k)), \\
\Hilb(S_{n,k}; q) &= \rev_q( [r]_q^n \cdot [k]!_{q^r} \cdot \Stir_{q^r}(n,k)).
\end{align*}

\item  Endow monomials in $\CC[\xx_n]$ with the lexicographic term order.  The standard monomial 
basis of $R_{n,k}$ is the collection of monomials $m = x_1^{a_1} \cdots x_n^{a_n}$ whose exponent
sequences $(a_1, \dots, a_n)$ are componentwise $\leq$ some shuffle of the sequences
$(r-1, 2r-1, \dots, kr-1)$ and $(\underbrace{kr, \dots, kr}_{n-k})$.  

The standard monomials basis of $S_{n,k}$ is the collection of monomials 
$m = x_1^{b_1} \cdots x_n^{b_n}$ whose exponent sequences $(b_1, \dots, b_n)$ are componentwise
$\leq$ some shuffle of the sequences $(r-1, 2r-1, \dots, kr-1)$ and $(\underbrace{kr-1, \dots, kr-1}_{n-k})$
(Theorem~\ref{artin-basis}).

\item There is a generalization of Bango and Biagoli's  descent monomial basis of $R_n$
to the rings $R_{n,k}$ and $S_{n,k}$ (Theorems~\ref{s-gs-basis-theorem} and \ref{r-gs-basis-theorem}).

\item  We have an explicit description of the {\em graded} isomorphism type of the $G_n$-modules 
$R_{n,k}$ and $S_{n,k}$ in terms of standard Young tableaux 
(Theorem~\ref{graded-isomorphism-type}). 
\end{itemize}

Although the properties of the rings $R_{n,k}$ (and $S_{n,k}$) shown above give natural extensions of the 
corresponding properties of $R_n$, the proofs of these results will be quite different.
Since the classical invariant ideal $I_n$ is cut out by a regular sequence 
$e_1(\xx_n^r), \dots, e_n(\xx_n^r)$, standard tools from commutative algebra (the {\em Koszul complex})
can be used to derive the graded isomorphism type of $R_n$.
Since neither the dimension
$\dim(R_{n,k}) = \sum_{z = 0}^{n-k}  {n \choose z} \cdot r^{n-z} \cdot k! \cdot \Stir(n-z,k)$ nor
$\dim(S_{n,k}) = r^n \cdot k! \cdot \Stir(n,k)$
have nice product formulas, we cannot hope to apply this technology to our situation.

Replacing the commutative algebra machinery used to analyze $R_n$
will be {\em combinatorial} commutative algebra machinery (Gr\"obner theory and straightening laws)
which will determine the structure of $R_{n,k}$.
Although some portions of our analysis will follow  from the arguments of \cite{HRS}
after making the change of variables $(x_1, \dots, x_n) \mapsto (x_1^r, \dots, x_n^r)$,
other arguments will have to be significantly adapted to account for the possible presence of zero blocks.

The rest of the paper is organized as follows.
In {\bf Section~\ref{Background}} we give background material related to $r$-colored ordered set partitions,
the Coxeter complex of $G_n$, symmetric functions, the representation theory of $G_n$, and 
Gr\"obner theory.
In {\bf Section~\ref{Polynomial}} we  prove some polynomial and symmetric function identities that will
be helpful in later sections.
In {\bf Section~\ref{Hilbert}} we calculate the standard monomial bases of $R_{n,k}$ and $S_{n,k}$
with respect to the lexicographic term order and calculate the Hilbert series of these quotients.
In {\bf Section~\ref{Descent}} we present our generalizations of the Bango-Biagoli descent monomial
basis of $R_n$ to obtain descent monomial-type bases for $R_{n,k}$ and $S_{n,k}$.
In {\bf Section~\ref{Frobenius}} we derive the graded isomorphism type of the 
$G_n$-modules $R_{n,k}$ and $S_{n,k}$.
We close in {\bf Section~\ref{Conclusion}} with some open questions.

\section{Background}
\label{Background}

\subsection{$r$-colored ordered set partitions}
We will make use of two orders on the 
alphabet 
\begin{equation*}
\AAA_r := \{i^c \,:\, i \in \ZZ_{> 0} \text{ and } 0 \leq c \leq r-1 \} 
\end{equation*}
of $r$-colored positive integers.  The first order $<$
weights colors more heavily than letter values, with higher colors being smaller:
\begin{equation*}
1^{r-1} < 2^{r-1} < \cdots < 1^{r-2} < 2^{r-2} < \cdots < 1^0 < 2^0 < \cdots.
\end{equation*}
The second order $\prec$ weights letter values more heavily than colors:
\begin{equation*}
1^{r-1} \prec 1^{r-2} \prec \cdots \prec 1^0 \prec 2^{r-1} \prec 2^{r-2} \prec \cdots \prec 2^0 \prec \cdots.
\end{equation*}

Let $w = w_1^{c_1} \dots w_n^{c_n}$ be any word in the alphabet $\AAA_r$.  
The {\em descent set} and {\em ascent set} of $w$ are defined using the order $<$:
\begin{equation}
\Des(w) := \{1 \leq i \leq n-1 \,:\, w_i^{c_i} > w_{i+1}^{c_{i+1}} \}, \hspace{0.2in}
\Asc(w) := \{1 \leq i \leq n-1 \,:\, w_i^{c_i} < w_{i+1}^{c_{i+1}} \}.
\end{equation}
We write $\des(w) := |\Des(w)|$ and $\asc(w) := |\Asc(w)|$ for the number of descents 
and ascents in $w$.
The {\em major index} $\maj(w)$ is given by the formula
\begin{equation}
\maj(w) := c(w) + r \cdot \sum_{i \in \Des(w)} i,
\end{equation}
where $c(w)$ denotes the sum of the colors of the letters in $w$.
This version of major index was defined by Haglund, Loehr, and Remmel in \cite{HLR}
(where it was termed `flag-major index').

Since we may view elements of $G_n$ as  $r$-colored permutations, the objects 
defined in the above paragraph make sense for $g \in G_n$.
For example, if $r = 3$ and $g = 3^0 4^1 6^2 2^0 5^2 1^2 \in G_6$, we have
$\Des(g) = \{1,2,4,5\}, \Asc(g) = \{3\}, \des(g) = 4, \asc(g) = 1,$
and 
\begin{equation*}
\maj(g) = (0 + 1 + 2 + 0 + 2 + 2) + 3 \cdot (1 + 2 + 4 + 5) = 43.
\end{equation*}

An {\em ordered set partition} is a set partition equipped with a total order on its blocks.
An {\em $r$-colored ordered set partition of size $n$} 
is an ordered set partition $\sigma$ of $[n]$ in which every letter is
assigned a color in the set $\{0, 1, \dots, r-1\}$.
For example,
\begin{equation*}
\sigma = \{3^0,4^1\} \prec \{6^2\} \prec \{1^2,2^0,5^0\}
\end{equation*}
is a $3$-colored ordered set partition of size $6$ with $3$ blocks.
We let $\OP_{n,k}$ be the collection of $r$-colored ordered set partitions of size $n$ with $k$ blocks.
We have 
\begin{equation}
|\OP_{n,k}| = r^n \cdot k! \cdot \Stir(n,k).
\end{equation}

We will often use bars to represent colored ordered set partitions more 
succinctly.  Here we write block elements in increasing order with respect to $\prec$.  Our example ordered set partition
becomes
\begin{equation*}
\sigma = (3^0 4^1 \mid 6^2 \mid 1^2 2^0 5^2 ).
\end{equation*}

We also have a descent starred notation for colored ordered set partitions, where we order elements within blocks 
in a decreasing fashion with respect to $<$.  Our example ordered set partition becomes
\begin{equation*}
\sigma = 3^0_* 4^1 \, \, 6^2 \, \, 2^0_* 5^2_* 1^2.
\end{equation*}
Notice that we use the order $\prec$ for the bar notation, but the order $<$ for the star notation.
The star notation represents $\sigma \in \OP_{n,k}$ as a pair $\sigma = ( g, S)$,
where $g \in G_n$, $|S| = n-k$ and $S \subseteq \Des(g)$.  
Our example ordered set partition becomes
\begin{equation*}
\sigma = (3^0 4^1 6^2 2^0 5^2 1^2, \{1,4,5\}).
\end{equation*}

Let $\sigma \in \OP_{n,k}$ and let $(g,S)$ be the descent starred representation of $\sigma$.
The {\em major index} of $\sigma = (g, S)$ is
\begin{equation}
\maj(\sigma) = \maj(g, S) = c(\sigma) + r \cdot \left[  \sum_{i \in \Des(g)} i 
- \sum_{i \in S} |\Des(g) \cap \{i, i+1, \dots, n\}| \right],
\end{equation}
where $c(\sigma)$ denotes the sum of the colors in $\sigma$.
In the example above, we have
\begin{equation*}
\maj(3^0_* 4^1 \, \, 6^2 \, \, 2^0_* 5^2_* 1^2) =
(0 + 1 + 2 + 0 + 2 + 2) + 3 \cdot [ (1 + 2 + 4 + 5) - (4 + 2 + 1) ] = 22.
\end{equation*}

Whereas the definition of $\maj$ for colored ordered set partitions used the order $<$ to compare elements, 
the definition of $\coinv$ uses the order $\prec$.  In particular, let $\sigma$ be a colored ordered set partition.
A {\em coinversion pair} in $\sigma$ is a pair of colored letters $i^c \preceq j^d$ appearing in $\sigma$ such that 
\begin{equation*}
\begin{cases}
\text{at least one of $i^c$ and $j^d$ is $\prec$-minimal in its block in $\sigma$,} \\
\text{$i^c$ and $j^d$ belong to different blocks of $\sigma$, and} \\
\text{if $i^c$'s block is to the right of $j^d$'s block, then only $j^d$ is $\prec$-minimal in its block.}
\end{cases}
\end{equation*}
In our example $\sigma = (3^0 4^1 \mid 6^2 \mid 1^2 2^0 5^2 )$, 
the 
coinversion pairs are  $3^0 6^2,  2^0 3^0 , 3^0 5^2, 2^0 6^2, 4^1 6^2,$ and $5^2 6^2$.
The statistic $\coinv(\sigma)$ is defined by
\begin{equation}
\coinv(\sigma) = [n\cdot(r-1) - c(\sigma)] + r \cdot (\text{number of coinversion pairs in $\sigma$}).
\end{equation}
In our example we have
\begin{equation*}
\coinv(3^0 4^1 \mid 6^2 \mid 1^2 2^0 5^2 ) = [6 \cdot 2 - (0 + 1 + 2 + 2 + 0 + 2)] + 3 \cdot 6 = 23.
\end{equation*}
In particular, whereas the statistic $\maj$ involves a sum over colors, the statistic $\coinv$ involves a sum
over {\em complements} of colors.
The statistic $\coinv$ on $r$-colored $k$-block ordered set partitions of $[n]$ is complementary to the statistic
$\inv$ defined in \cite[Sec. 4]{Rhoades}.

We need an extension of colored set partitions involving repeated letters.
An {\em $r$-colored ordered multiset partition} $\mu$ is a sequence of finite nonempty 
sets $\mu = (M_1, \dots, M_k)$ of elements from the alphabet $\AAA_r$.
The {\em size} of $\mu$ is $|M_1| + \cdots |M_k|$ and we say that $\mu$ has {\em $k$ blocks}.
For example, we have that $\mu = (2^1 2^0 3^1  \mid 1^2 3^1 \mid 2^0 4^2 )$ is a 
$3$-colored ordered multiset partition
of size $7$ with $3$ blocks.  

We emphasize that the blocks of ordered multiset partitions are {\em sets}; there
are no repeated letters within blocks (although the same letter can occur with different colors within a single block).
If $\mu$ is an ordered multiset partition, the statistics $\coinv(\mu)$ and $\maj(\mu)$ have the same definitions
as in the case of no repeated letters.

\subsection{$G_n$-faces}
To describe the combinatorics of the rings $R_{n,k}$, 
we introduce the following concept of a $G_n$-face.
In the following definition we require $r \geq 2$.

\begin{defn}
\label{g-face}
A {\em $G_n$-face} is an ordered set partition
$\sigma = (B_1 \mid B_2 \mid \cdots \mid B_m)$ of $[n]$ such that the letters in every block of $\sigma$,
with the possible exception of the first block $B_1$, are decorated by the colors $\{0, 1, \dots, r-1\}$.
\end{defn}

Let $\sigma = (B_1 \mid B_2 \mid \cdots \mid B_m)$ be an $G_n$-face.  If the letters in $B_1$ are uncolored, then
$B_1$ is called the {\em zero block} of $\sigma$.  The {\em dimension} of $\sigma$ is the number of nonzero blocks
in $\sigma$.  Let $\FFF_{n,k}$ denote the set of $G_n$-faces of dimension $k$.
For example, if $r = 3$ we have
\begin{align*}
( 2 5  \mid 1^1 3^2 6^2   \mid 4^1 ) &\in \FFF_{6,2} \text{ and } \\
( 2^2 5^1 \mid 1^1  3^2 6^2  \mid 4^1) &\in \FFF_{6,3},
\end{align*}
where the lack of colors on the letters of the first block $\{2,5\}$ of the top face indicates that $\{2,5\}$ is a 
zero block.  When $k = n$, we have $\FFF_{n,n} = \OP_{n,n} = G_n$ as there cannot be a zero block.

The notation {\em face} in Definition~\ref{g-face} comes from the identification of the $k$-dimensional 
$G_n$-faces with the $k$-dimensional faces in the Coxeter complex of $G_n$.
The set $\FFF_{n,k}$ may also be identified with the 
collection of rank $k$ elements in the Dowling lattice $Q_n(\Gamma)$
to a group $\Gamma$ of size $r$ (see \cite{Dowling}).  By considering the possible sizes of zero
blocks, we see that the number of faces in $\FFF_{n,k}$ is 
\begin{equation}
|\FFF_{n,k}| = \sum_{z = 0}^{n-k} {n \choose z} \cdot r^{n-z} \cdot k! \cdot \Stir(n-z,k).
\end{equation}

We will consider an action of the group $G_n$ on $\FFF_{n,k}$.  To describe this action it suffices
to describe the action of permutation matrices $\symm_n \subseteq G_n$
and the diagonal subgroup $\ZZ_r \times \cdots \times \ZZ_r \subseteq G_n$.
If $\pi = \pi_1 \dots \pi_n \in \symm_n$, then 
$\pi$ acts on $G_n$ by swapping letters while preserving colors. 
For example, if $\pi = 614253 \in \symm_6$, then
\begin{equation*}
\pi. (25 \mid 1^1 3^2 6^2 \mid 4^1) = (15 \mid 6^1 4^2 3^2 \mid 2^1) = (15 \mid 3^2 4^2 6^1 \mid 2^1).
\end{equation*}
A diagonal matrix $g = \mathrm{diag}(\zeta^{c_1}, \dots, \zeta^{c_n})$ acts by increasing the color of the letter
$i$ by $c_i$ (mod $r$), while leaving elements in the zero block uncolored.
For example, if $r = 3$ 
an example action of the diagonal matrix $g = \mathrm{diag}(\zeta, \zeta^2, \zeta^2, \zeta, \zeta^2, \zeta) \in G_6$ is
\begin{equation*}
g. (25 \mid 1^1 3^2 6^2 \mid 4^1) = (25 \mid 1^2 3^1 6^0 \mid 4^2).
\end{equation*}
It is clear that the action of $G_n$ on $\FFF_{n,k}$ preserves the subset $\OP_{n,k}$ of $r$-colored
ordered set partitions.

We extend the definition of $\coinv$ to $G_n$-faces as follows.
There is a natural map
\begin{equation}
\pi: \FFF_{n,k} \rightarrow \bigcup_{z = 0}^{n-k} \OP_{n-z,k}
\end{equation}
which removes the zero block $Z$ of a $G_n$-face
 (if present), and then maps the letters in $[n] - Z$ onto $\{1, 2, \dots, n - |Z| \}$ via an order-preserving
bijection while preserving colors.  For example, we have
\begin{equation*}
\pi:  (2 5  \mid  1^1 3^2 6^2   \mid 4^1)  \mapsto (1^1 2^2 4^2   \mid 3^1).
\end{equation*}
If $\sigma$ is a $G_n$-face whose zero block has size $z$, we define $\coinv(\sigma)$ by
\begin{equation}
\coinv(\sigma) := krz + \coinv(\pi(\sigma)).
\end{equation}
In the $r = 3$ example above, we have
\begin{equation*}
\coinv(2 5  \mid  1^1 3^2 6^2   \mid 4^1) = 2 \cdot 3 \cdot 2 + \coinv(1^1 2^2 4^2   \mid 3^1) = 12 + 8 = 20.
\end{equation*}

\subsection{Symmetric functions}
For $n \geq 0$,
a {\em (weak) composition of $n$} is a sequence $\alpha = (\alpha_1, \dots, \alpha_k)$ 
of nonnegative integers with $\alpha_1 + \cdots + \alpha_k = n$.
We write $\alpha \models n$ or $|\alpha| = n$ to indicate that $\alpha$ is a composition of $n$.

A {\em partition of $n$} is a composition $\lambda$ of $n$ whose parts are positive and weakly 
decreasing.  We write $\lambda \vdash n$ to indicate that $\lambda$ is a partition of $n$.
If $\lambda$ and $\mu$ are partitions (of any size) we say that $\lambda$ {\em dominates} $\mu$ and 
write $\lambda \geq_{dom} \mu$ if
$\lambda_1 + \cdots + \lambda_i \geq \mu_1 + \cdots  + \mu_i$ for all $i \geq 1$.

The {\em Ferrers diagram} of  a partition $\lambda$ 
(in English notation) consists of $\lambda_i$ left-justified boxes in row $i$.
The Ferrers diagram of $(4,2,2) \vdash 8$ is shown below.
The {\em conjugate} $\lambda'$ of a partition $\lambda$ is obtained by reflecting the Ferrers diagram across
its main diagonal.  For example, we have $(4,2,2)' = (3,3,1,1)$.
\begin{small}
\begin{center}
\begin{Young}
 & & & \cr
 &  \cr 
 & \cr
\end{Young}
\end{center}
\end{small}

For an infinite sequence of variables $\yy = (y_1, y_2, \dots )$, let $\Lambda(\yy)$ denote the ring of symmetric 
functions in the variable set $\yy$ with coefficients in the field $\QQ(q)$. 
The ring $\Lambda(\yy) = \bigoplus_{n \geq 0} \Lambda(\yy)_n$ is graded by
degree.  The degree $n$ piece $\Lambda(\yy)_n$ has vector space dimension equal to the number of partitions of $n$.

For a partition $\lambda$, let
\begin{center}
$\begin{array}{ccccc}
m_{\lambda}(\yy),  & e_{\lambda}(\yy), & h_{\lambda}(\yy), & s_{\lambda}(\yy)
\end{array}$
\end{center}
be the corresponding {\em monomial, 
elementary, (complete) homogeneous,} and {\em Schur} symmetric functions.
As $\lambda$ varies over the collection of all partitions, these symmetric functions give four different bases
for $\Lambda(\yy)$.
Given any composition $\beta$ whose nonincreasing rearrangement is the partition $\lambda$,
we extend this notation by setting $e_{\beta}(\yy) := e_{\lambda}(\yy)$ and
$h_{\beta}(\yy) := h_{\lambda}(\yy)$.

Let $\omega: \Lambda(\yy) \rightarrow \Lambda(\yy)$ be the linear map
which sends 
$s_{\lambda}(\yy)$ to $s_{\lambda'}(\yy)$ for all partitions $\lambda$.  
The map $\omega$ is an involution and a ring automorphism.  For any partition $\lambda$,
we have $\omega(e_{\lambda}(\yy)) = h_{\lambda}(\yy)$ and
$\omega(h_{\lambda}(\yy)) = e_{\lambda}(\yy)$.

We let $\langle \cdot, \cdot \rangle$ denote the {\em Hall inner product} on $\Lambda(\yy)$.  This can be 
defined by either of the rules $\langle s_{\lambda}(\yy), s_{\mu}(\yy) \rangle = \delta_{\lambda,\mu}$
or $\langle h_{\lambda}(\yy), m_{\mu}(\yy) \rangle = \delta_{\lambda, \mu}$ for all partitions $\lambda, \mu$.
If $F(\yy) \in \Lambda(\yy)$ is any symmetric function, let $F(\yy)^{\perp}$ be the linear operator on
$\Lambda(\yy)$ which is adjoint to the operation of multiplication by $F(\yy)$.  That is, we have
\begin{equation}
\langle F(\yy)^{\perp} G(\yy), H(\yy) \rangle = \langle G(\yy), F(\yy) H(\yy) \rangle
\end{equation}
for all symmetric functions $G(\yy), H(\yy) \in \Lambda(\yy)$.

The representation theory of $G_n$ is analogous to that of  $\symm_n$,
but involves $r$-tuples of objects.  
Given any  $r$-tuple $\bm{o} = (o^{(1)}, o^{(2)}, \dots, o^{(r-1)}, o^{(r)})$ of objects, we define the {\em dual}
$\bm{o^*}$ to be the $r$-tuple 
\begin{equation}
\bm{o^*} := (o^{(r-1)}, \dots, o^{(2)}, o^{(1)}, o^{(r)})
\end{equation}
obtained by reversing the first $r-1$ terms in the sequence $\bm{o}$. 
At the algebraic level, the operator $\bm{o} \mapsto \bm{o^*}$ corresponds to the 
entrywise action of
 complex conjugation
on matrices in $G_n$ (which is trivial when $r = 1$ or $r = 2$).
If $1 \leq i \leq r$, we define the {\em dual} $i^*$ of $i$ by the rule
\begin{equation}
i^* = \begin{cases}
r-i & 1 \leq i \leq r-1 \\
r & i = r.
\end{cases}
\end{equation}
We therefore have
\begin{equation}
\bm{o^*} = (o^{(1^*)}, \dots, o^{(r^*)}) \text{ if } \bm{o} = (o^{(1)}, \dots, o^{(r)}).
\end{equation}

For a positive integer $n$, an {\em $r$-composition} 
$\bm{\alpha}$ of $n$ is an $r$-tuple of compositions 
$\bm{\alpha} = (\alpha^{(1)}, \dots, \alpha^{(r)})$ which satisfies 
$|\bm{\alpha}| := |\alpha^{(1)}| + \cdots + |\alpha^{(r)}| = n$.
We write $\bm{\alpha} \models_r n$ to indicate that $\bm{\alpha}$ is an $r$-composition of $n$.

Similarly, an {\em $r$-partition}
$\blambda = (\lambda^{(1)}, \dots, \lambda^{(r)})$ of $n$ is an $r$-tuple of partitions with
$|\blambda| := |\lambda^{(1)}| + \cdots + |\lambda^{(r)}| = n$.  
We write $\blambda \vdash_r n$ to mean that $\blambda$ is an $r$-partition of $n$.
The {\em conjugate} of an $r$-partition $\blambda = (\lambda^{(1)}, \dots, \lambda^{(r)})$ 
is defined componentwise;
$\bm{\lambda'} := (\lambda^{(1)'}, \dots, \lambda^{(r)'})$.

The {\em Ferrers diagram} of an $r$-partition $\bm{\lambda} = (\lambda^{(1)}, \dots, \lambda^{(r)})$ 
is  the $r$-tuple of Ferrers diagrams of its constituent partitions.  The Ferrers diagram of the 
$3$-partition $((3,2), \varnothing, (2,2)) \vdash_3 9$ is shown below.
\begin{center}
\begin{small}
\begin{Young}
 & & \cr
  & \cr
\end{Young},  \, \,
\begin{large}$\varnothing$\end{large},  \, \,
\begin{Young}
 &  \cr
&  \cr
\end{Young}
\end{small}
\end{center}

Let $\blambda = (\lambda^{(1)}, \dots, \lambda^{(r)}) \vdash_r n$ be an $r$-partition of $n$.
A {\em semistandard tableau $\bT$ of shape $\blambda$} is a tuple
$\bT = (T^{(1)}, \dots, T^{(r)})$, where $T^{(i)}$ is a filling of the boxes of $\lambda^{(i)}$ with positive integers
which increase weakly across rows and strictly down columns.  
A semistandard tableau $\bT$ of shape $\blambda$ is {\em standard} if the entries 
$1, 2, \dots, n$ all appear precisely once in $\bT$.
Let $\SYT^r(n)$ denote the collection of all possible standard tableaux with $r$ components and $n$
boxes.

For example, let $\blambda= ((3,2), \varnothing, (2,2)) \vdash_3 9$. 
A semistandard tableau $\bT = (T^{(1)}, T^{(2)}, T^{(3)})$ of shape $\blambda$ is 
\begin{center}
\begin{small}
\begin{Young}
 1 & 3 & 3\cr
 3 & 4 \cr
\end{Young},  \, \,
\begin{large}$\varnothing$\end{large},  \, \,
\begin{Young}
1 & 3  \cr
4 & 4  \cr
\end{Young}
\end{small}.
\end{center}
A  standard tableau of shape $\blambda$ is
\begin{center}
\begin{small}
\begin{Young}
 3 & 6 & 9\cr
 5 & 7 \cr
\end{Young},  \, \,
\begin{large}$\varnothing$\end{large},  \, \,
\begin{Young}
1 & 4  \cr
2 &8  \cr
\end{Young}
\end{small}.
\end{center}

Let $\bm{T} = (T^{(1)}, \dots, T^{(r})) \in \SYT^r(n)$ 
be a standard tableau with $n$ boxes.  A letter $1 \leq i \leq n-1$ is called a 
{\em descent} of $\bm{T}$ if 
\begin{itemize}
\item the letters $i$ and $i+1$ appear in the same component $T^{(j)}$ of $\bm{T}$, and
 $i+1$ appears in a row below $i$
in $T^{(j)}$, or
\item the letter $i+1$ appears in a component of $\bm{T} = (T^{(1)}, \dots, T^{(r)})$ strictly to the right of the component 
containing $i$.
\end{itemize}
We let $\Des(\bm{T}) := \{ 1 \leq i \leq n \,:\, \text{$i$ is a descent of $\bm{T}$} \}$  denote the collection of all
descents of $\bm{T}$ and let $\des(\bm{T}) := | \Des(\bm{T}) |$ denote the number of descents of $T$.
The {\em major index} of $\bm{T}$ is
\begin{equation}
\maj(\bm{T}) := r \cdot \sum_{i \in \Des(\bm{T})} i +  \sum_{j = 1}^r (j-1) \cdot |T^{(j)}|,
\end{equation}
where $|T^{(j)}|$ is the number of boxes in the component $T^{(j)}$.
For example, if $\bm{T} = (T^{(1)}, T^{(2)}, T^{(3)})$ is the standard tableau above, then
$\Des(\bm{T}) = \{1,3,6,7\}, \des(\bm{T}) = 4$, and
\begin{equation*}
\maj(\bm{T}) = 3 \cdot (1 + 3 + 6 + 7) + (0 \cdot 5 + 1 \cdot 0 + 2 \cdot 4) = 59.
\end{equation*}

For $1 \leq i \leq r$, let $\xx^{(i)} = (x_1^{(i)}, x_2^{(i)}, \dots )$ be an infinite list of variables and let
$\Lambda(\xx^{(i)})$ be the ring of symmetric functions in the variables $\xx^{(i)}$ with coefficients in $\QQ(q)$.
We use $\xx$ to denote the union of the $r$ variable sets $\xx^{(1)}, \dots, \xx^{(r)}$.
Let $\Lambda^r(\xx)$ be the  tensor product
\begin{equation*}
\Lambda^r(\xx) =  \Lambda(\xx^{(1)}) \otimes \cdots \otimes \Lambda(\xx^{(r)}).
\end{equation*}
We can think of $\Lambda^r(\xx)$ as the ring of formal power series in $\QQ(q)[[\xx]]$ which are symmetric 
in the variable sets $\xx^{(1)}, \dots, \xx^{(1)}$ separately.

The algebra $\Lambda^r(\xx)$ is spanned by generating tensors of the form
\begin{equation*}
F_1(\xx^{(1)}) \cdot \ldots \cdot F_r(\xx^{(r)}) := 
F_1(\xx^{(1)}) \otimes \cdots \otimes F_r(\xx^{(r)}),
\end{equation*}
where $F_i(\xx^{(i)}) \in \Lambda(\xx^{(i)})$ is a symmetric function in the variables $\xx^{(i)}$.
The algebra $\Lambda^r(\xx)$
is graded via
\begin{equation*}
\deg(F_1(\xx^{(1)}) \cdot  \ldots \cdot F_{r}(\xx^{(r)})) :=
\deg(F_1(\xx^{(1)})) + \cdots + \deg(F_{r}(\xx^{(r)})),
\end{equation*}
where the $F_i(\xx^{(i)})$ are homogeneous.

The standard bases of $\Lambda^r(\xx)$ are obtained from those of 
$\Lambda(\xx^{(1)}), \dots, \Lambda(\xx^{(r)})$ by multiplication.  More precisely,
let $\bm{\lambda} = (\lambda^{(1)}, \dots, \lambda^{(r)})$ be an $r$-partition.
We define elements 
\begin{equation*}
\bm{m_{\lambda}}(\xx), \bm{e_{\lambda}}(\xx), \bm{h_{\lambda}}(\xx), \bm{s_{\lambda}}(\xx) 
\in \Lambda^r(\xx)
\end{equation*}
 by
\begin{center}
$\begin{array}{cc}
 \bm{m_{\lambda}}(\xx) := m_{\lambda^{(1)}}(\xx^{(1)}) \cdots m_{\lambda^{(r)}}(\xx^{(r)}), & 
 \bm{e_{\lambda}}(\xx) := e_{\lambda^{(1)}}(\xx^{(1)}) \cdots e_{\lambda^{(r)}}(\xx^{(r)}),  \\
 \bm{h_{\lambda}}(\xx) := h_{\lambda^{(1)}}(\xx^{(1)}) \cdots h_{\lambda^{(r)}}(\xx^{(r)}),  
 & \bm{s_{\lambda}}(\xx) := s_{\lambda^{(1)}}(\xx^{(1)}) \cdots s_{\lambda^{(r)}}(\xx^{(r)}).
\end{array}$
\end{center}
As $\blambda$ varies over the collection of all $r$-partitions, any of the sets 
$\{ \bm{m_{\lambda}}(\xx) \}, \{ \bm{e_{\lambda}}(\xx) \}, \{ \bm{h_{\lambda}}(\xx) \},$ or
$\{ \bm{s_{\lambda}}(\xx) \}$ forms a basis for $\Lambda^r(\xx)$.
If $\bbeta = (\beta^{(1)}, \dots, \beta^{(r)})$ is an $r$-composition, we extend this notation by setting
\begin{center}
$\begin{array}{cc}
\bm{e_{\beta}}(\xx) := e_{\beta^(1)}(\xx^{(1)}) \cdots e_{\beta^{(r)}}(\xx^{(r)}), &
\bm{h_{\beta}}(\xx) := h_{\beta^(1)}(\xx^{(1)}) \cdots h_{\beta^{(r)}}(\xx^{(r)}). 
\end{array}$
\end{center}

The Schur functions $\bm{s_{\lambda}}(\xx)$  admit the following combinatorial
description.  If $\bT = (T^{(1)}, \dots, T^{(r)})$ is a semistandard tableau with $r$ components, let 
$\xx^{\bT}$ be the monomial in the variable set $\xx$ where the exponent of $x^{(i)}_j$ equals the multiplicity of $j$
in the tableau $T^{(i)}$.  
For example, if $r = 3$ and $\bT = (T^{(1)}, T^{(2)}, T^{(3)})$ is as above, we have
\begin{equation*}
\xx^{\bT} = (x^{(1)}_1)^1 (x^{(1)}_3)^3 (x^{(1)}_4)^1 (x^{(3)}_1)^1 (x^{(3)}_3)^1 (x^{(3)}_4)^2.
\end{equation*}
Similarly, if $w$ is any word in the $r$-colored positive integers $\AAA_r$, let $\xx^w$ be the monomial in $\xx$
where the exponent of $x^{(i)}_j$ equals the multiplicity of $j^{i-1}$ in the word $w$.
Also, if
$\bbeta = (\beta^{(1)}, \dots, \beta^{(r)})$ is an $r$-composition, define the monomial
$\xx^{\bbeta}$ by
\begin{equation}
\xx^{\bbeta} := (x^{(1)}_1)^{\beta^{(1)}_1} (x^{(1)}_2)^{\beta^{(1)}_2} \cdots (x^{(2)}_1)^{\beta^{(2)}_1} 
(x^{(2)}_2)^{\beta^{(2)}_2} \cdots 
\end{equation}
Given an $r$-partition $\blambda \vdash_r n$, we have
\begin{equation}
\bm{s_{\lambda}}(\xx) = \sum_{\bT} \xx^{\bT},
\end{equation}
where the sum is over all semistandard tableaux $\bT$ of shape $\blambda$.

The Hall inner product $\langle \cdot, \cdot \rangle$ extends to $\Lambda^r(\xx)$ by the rule
\begin{equation}
\langle \bm{s_{\lambda}}(\xx), \bm{s_{\mu^*}}(\xx) \rangle =
\langle \bm{h_{\lambda}}(\xx), \bm{m_{\mu^*}}(\xx) \rangle = \delta_{\blambda, \bm{\mu}}
\end{equation}
for all $r$-partitions $\blambda$ and $\bm{\mu}$. 
The presence of duals in this definition comes from the nontriviality of complex conjugation on 
$G_n$ for $r > 2$.

The involution $\omega$ is defined on $\Lambda^r(\xx) = \Lambda(\xx^{(1)}) \otimes \cdots \otimes \Lambda(\xx^{(r)})$
by applying $\omega$ in each component separately.  
The map $\omega$ is  an isometry of the inner product $\langle \cdot, \cdot \rangle$.

If $\bm{F(x)} \in \Lambda^r(\xx)$, 
we let $\bm{F(x)}^{\perp}$ be the operator on $\Lambda^r(\xx)$ which is adjoint to multiplication by $\bm{F(x)}$ under the 
inner product $\langle \cdot, \cdot \rangle$.  In particular, if $j \geq 1$ and if $1 \leq i \leq r$, we have 
$h_j(\xx^{(i)}), e_j(\xx^{(i)}) \in \Lambda^r(\xx)$, so that 
$h_j(\xx^{(i)})^{\perp}$ and $e_j(\xx^{(i)})^{\perp}$ make sense as linear operators on $\Lambda^r(\xx)$.
These operators (and their `dual' versions
$h_j(\xx^{(i^*)})^{\perp}$ and $e_j(\xx^{(i^*)})^{\perp}$)
 will play a key role in this paper.

\subsection{Representations of $G_n$}  
In his thesis, Specht \cite{Specht} described the irreducible representations of $G_n$.
We recall his construction.

Given a matrix $g \in G_n$, 
define numbers $\chi(g)$ and $\sign(g)$ by 
\begin{align}
\chi(g) &:= \text{product of the nonzero entries in $g$}, \\
\sign(g) &:= \text{determinant of the permutation matrix underlying $g$}.
\end{align}
In particular, the number $\chi(g)$ is an $r^{th}$ root of unity and $\sign(g) = \pm 1$.  Both of the functions 
$\chi$ and $\sign$ are linear characters of $G_n$.  In other words, we have
$\chi(gh) = \chi(g) \chi(h)$ and $\sign(g h) = \sign(g) \sign(h)$ for all $g, h \in G_n$.

It is well known that the irreducible 
complex representations of the 
symmetric group $\symm_n$ are indexed by partitions $\lambda \vdash n$.  Given $\lambda \vdash n$,
let $S^{\lambda}$ be the corresponding irreducible $\symm_n$-module.
For example, we have that $S^{(n)}$ is the trivial representation of $\symm_n$ and $S^{(1^n)}$ is the sign 
representation of $\symm_n$.

Let $V$ be a $G$-module and let $U$ be an $\symm_n$-module.  We build a 
$G_n$-module $V \wr U$ by letting $V \wr U = (V)^{\otimes n} \otimes U$ as a vector space and defining
the action of $G_n$ by
\begin{equation}
\mathrm{diag}(g_1, \dots, g_n).(v_1 \otimes \cdots \otimes v_n \otimes u) := 
(g_1.v_1) \otimes \cdots \otimes (g_n.v_n) \otimes u,
\end{equation}
for all diagonal matrices $\mathrm{diag}(g_1, \dots, g_n) \in G_n$, and
\begin{equation}
\pi.(v_1 \otimes \cdots \otimes v_n \otimes u) := v_{\pi^{-1}_1} \otimes \cdots \otimes v_{\pi^{-1}_n} \otimes (\pi.u),
\end{equation}
for all $\pi \in \symm_n \subseteq G_n$.  If $V$ is an irreducible $G$-module and $U$ 
is an irreducible $\symm_n$-module, then $V \wr U$ is an irreducible $G_n$-module, but not all of the
irreducible $G_n$-modules arise in this way.

For any  composition $\alpha = (\alpha_1, \dots , \alpha_r) \models n$ with $r$ parts,
 the parabolic subgroup of 
block diagonal matrices in $G_n$ with block sizes $\alpha_1, \dots, \alpha_r$ 
gives an inclusion 
\begin{equation}
G_{\alpha} :=
G_{\alpha_1} \times \cdots \times G_{\alpha_r} \subseteq G_n.
\end{equation}
 If $W_i$ is a $G_{\alpha_i}$-module for $1 \leq i \leq r$, the tensor product 
 $W_1 \otimes \cdots \otimes W_r$ is a $G_{\alpha}$-module and
the induction $\Ind_{G_{\alpha}}^{G_n}(W_1 \otimes \cdots \otimes W_r)$ 
 is a $G_n$-module.

We index the irreducible representations of the cyclic group
$G = \ZZ_r = \langle \zeta \rangle$ in the following slightly nonstandard way.
For $1 \leq i \leq r$, 
let $\rho_i: G \rightarrow GL_1(\CC) = \CC^{\times}$ be the homomorphism
\begin{equation}
\rho_i: \zeta \mapsto \zeta^{-i}.
\end{equation}
and let $V_i$ be the vector space $\CC$ with $G$-module structure given by $\rho_i$.
In particular, we have that $V_r$ is the trivial representation of $G$ and
$V_1, V_2, \dots, V_{r-1}$ are the nontrivial irreducible representations of $G$.

The irreducible modules for $G_n$ are indexed by $r$-partitions of $n$.
If $\bm{\lambda} = (\lambda^{(1)}, \dots, \lambda^{(r)}) \vdash_r n$ is an $r$-partition of $n$, let 
$\alpha = (\alpha_1, \dots, \alpha_r) \models n$ be the composition whose parts are $\alpha_i := |\lambda^{(i)}|$.
Define 
$\bm{S^{\lambda}}$ to be the 
$G_n$-module given by
\begin{equation}
\bm{S^{\lambda}} := \Ind_{G_{\alpha}}^{G_n}
((V_1 \wr S^{\lambda^{(1)}}) \otimes \cdots \otimes (V_r \wr S^{\lambda^{(r)}})).
\end{equation}
Specht proved that the set $\{ \bm{S^{\lambda}} \,:\, \bm{\lambda} \vdash_r n \}$ forms a complete set of 
nonisomorphic irreducible representations of $G_n$.

\begin{example}
For any $1 \leq i \leq r$, both of the functions 
\begin{equation}
\begin{cases}
\chi^i: g \mapsto (\chi(g))^i \\
\sign \cdot \chi^i: g \mapsto \sign(g) \cdot (\chi(g))^i
\end{cases}
\end{equation}
on $G_n$ are linear characters.
We leave it for the reader to check that under the above classification we have
\begin{center}
$\begin{array}{cccc}
\chi^1 \leftrightarrow ((n), \varnothing, \dots, \varnothing), & & 
\sign \cdot \chi^1 \leftrightarrow ((1^n), \varnothing \dots, \varnothing), \\
\chi^2 \leftrightarrow (\varnothing, (n), \dots,  \varnothing), & & 
\sign \cdot \chi^2 \leftrightarrow (\varnothing, (1^n), \dots, \varnothing),  \\ 
\vdots & & \vdots \\
\chi^r \leftrightarrow (\varnothing,  \varnothing,  \dots, (n)), & &
\sign \cdot \chi^r \leftrightarrow (\varnothing, \varnothing, \dots, (1^n)). 
\end{array}$
\end{center}
Since $\chi^r$ is the trivial character of $G_n$, the trivial representation
therefore corresponds to the $r$-partition $(\varnothing, \dots, \varnothing, (n))$.
\end{example}

Let $V$ be a finite-dimensional $G_n$-module.  There exist unique integers $m_{\bm{\lambda}}$
such that 
\begin{equation*}
V \cong \bigoplus_{\bm{\lambda} \vdash_r n} (\bm{S^{\lambda}})^{m_{\bm{\lambda}}}.
\end{equation*}
The {\em Frobenius character}  $\Frob(V) \in \Lambda^r(\xx)$ of $V$ is given by
\begin{equation}
\Frob(V) := \sum_{\bm{\lambda} \vdash_r n} m_{\bm{\lambda}} \bm{s_{\lambda}}(\xx).
\end{equation}
In particular, the multiplicity $m_{\bm{\lambda}}$ of $\bm{S^{\lambda}}$ in $V$ is 
$\langle \Frob(V), \bm{s_{\lambda^*}}(\xx) \rangle$.

More generally, if $V = \oplus_{d \geq 0} V_d$ is a graded $G_n$-module with
each $V_d$ finite-dimensional, the 
{\em graded Frobenius character}  $\grFrob(V;q) \in \Lambda^r(\xx)[[q]]$ of $V$ is
\begin{equation}
\grFrob(V;q) := \sum_{d \geq 0} \Frob(V_d) \cdot q^d.
\end{equation}
Also recall that 
the {\em Hilbert series} $\Hilb(V;q)$ of $V$ is 
\begin{equation}
\Hilb(V;q) := \sum_{d \geq 0} \dim(V_d) \cdot q^d.
\end{equation}

The Frobenius character is compatible with induction product in the following way.  
Let $V$ be an $G_n$-module and let $W$ be a $G_m$ module.
The tensor product $V \otimes W$ is a $G_{(n,m)}$-module, so that 
$\Ind_{G_{(n,m)}}^{G_{n+m}} (V \otimes W)$ is a
$G_{n+m}$-module.
We have
\begin{equation}
\Frob(\Ind_{G_{(n,m)}}^{G_{n+m}} (V \otimes W)) = 
\Frob(V) \cdot \Frob(W),
\end{equation}
where the multiplication on the right-hand side takes place within $\Lambda^r(\xx)$.

\subsection{Gr\"obner theory}
A total order $<$ on the monomials in $\CC[\xx_n]$ is called a {\em monomial order} if 
\begin{itemize}
\item  $1 \leq m$ for every monomial $m \in \CC[\xx_n]$, and
\item  $m \leq m'$ implies $m \cdot m'' \leq m' \cdot m''$ for all monomials $m, m', m'' \in \CC[\xx_n]$.
\end{itemize}
In this paper we will only use the {\em lexicographic} monomial order defined by
$x_1^{a_1} \cdots x_n^{a_n} < x_1^{b_1} \cdots x_n^{b_n}$ if there exists $1 \leq i \leq n$ such that
$a_1 = b_1, \dots, a_{i-1} = b_{i-1}$, and $a_i < b_i$.

If $f \in \CC[\xx_n]$ is a nonzero polynomial and $<$ is a monomial order, let $\initial_<(f)$ be the leading term of 
$f$ with respect to the order $<$.  If $I \subseteq \CC[\xx_n]$ is an ideal, the corresponding {\em initial ideal}
$\initial_<(I) \subseteq \CC[\xx_n]$ is the monomial ideal in $\CC[\xx_n]$ generated by the leading terms
of every nonzero polynomial in $I$:
\begin{equation}
\initial_<(I) := \langle \initial_<(f) \,:\, f \in I - \{0\} \rangle.
\end{equation}
The collection of monomials $m \in \CC[\xx_n]$ which are not contained in $\initial_<(I)$, namely
\begin{equation}
\{ \text{monomials $m \in \CC[\xx_n]$} \,:\, \initial(f) \nmid m \text{ for all $f \in I - \{0\}$} \}
\end{equation}
descends to a vector space basis for the quotient $\CC[\xx_n]/I$.  This is called the 
{\em standard monomial basis}.

A finite subset $B = \{g_1, \dots, g_m\} \subseteq I$ of nonzero polynomials in $I$ is called a {\em Gr\"obner basis}
of $I$ if $\initial_<(I) = \langle \initial_<(g_1), \dots, \initial_<(g_m) \rangle$.  A Gr\"obner basis $B$ 
is called {\em reduced} if 
\begin{itemize}
\item  the leading coefficient of $g_i$ is $1$ for all $1 \leq i \leq m$, and
\item  for $i \neq j$, the monomial $\initial(g_i)$ does not divide any of the terms appearing in $g_j$.
\end{itemize}
After fixing a monomial order, every ideal $I \subseteq \CC[\xx_n]$ has a unique reduced Gr\"obner basis.

\section{Polynomial identities}
\label{Polynomial}

In this section we prove a family of polynomial and symmetric function identities which 
will be useful in our analysis of the rings $R_{n,k}$ and $S_{n,k}$.  
The first of these identities is the $G_n$-analog of \cite[Lem. 3.1]{HRS}.

\begin{lemma}
\label{alternating-sum-lemma}
Let $k \leq n$,
let $\alpha_1, \dots, \alpha_k \in \CC$ be distinct  complex numbers, and let $\beta_1, \dots, \beta_n \in \CC$
be complex numbers with the property that $\{\alpha_1, \dots, \alpha_k\} \subseteq \{\beta_1^r, \dots, \beta_n^r \}$.
For any $n-k+1 \leq s \leq n$ we have
\begin{equation}
\sum_{j = 0}^{s} (-1)^{j} e_{s-j}(\beta_1^r, \dots, \beta_n^r) h_j(\alpha_1, \dots, \alpha_k) = 0.
\end{equation}
\end{lemma}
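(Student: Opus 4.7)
The plan is to prove the identity via a standard generating function argument. Define
\[
E_\gamma(t) := \prod_{i=1}^n (1 + \gamma_i t) = \sum_{p \geq 0} e_p(\gamma_1,\dots,\gamma_n)\, t^p, \qquad H_\alpha(t) := \prod_{j=1}^k \frac{1}{1 - \alpha_j t} = \sum_{q \geq 0} h_q(\alpha_1,\dots,\alpha_k)\, t^q,
\]
where $\gamma_i := \beta_i^r$. Substituting $-t$ into $H_\alpha$ and multiplying, the coefficient of $t^s$ in $E_\gamma(t) \cdot H_\alpha(-t)$ is exactly the sum appearing in the statement:
\[
E_\gamma(t) \cdot H_\alpha(-t) = \sum_{s \geq 0} \left( \sum_{j=0}^s (-1)^j e_{s-j}(\gamma) h_j(\alpha) \right) t^s.
\]

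Next I would observe that
\[
E_\gamma(t) \cdot H_\alpha(-t) = \frac{\prod_{i=1}^n (1 + \gamma_i t)}{\prod_{j=1}^k (1 + \alpha_j t)}.
\]
The hypothesis is that the $\alpha_j$ are pairwise distinct and each appears (with multiplicity at least one) among the $\gamma_i = \beta_i^r$. Therefore every denominator factor $(1 + \alpha_j t)$ cancels against some numerator factor $(1 + \gamma_i t)$, leaving a polynomial in $t$ of degree at most $n - k$. Consequently the coefficient of $t^s$ vanishes whenever $s > n-k$, which is precisely the range $n-k+1 \leq s \leq n$ in the statement.

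There is essentially no obstacle: the only subtle point is that the hypothesis requires the $\alpha_j$ to be \emph{distinct}, which is what guarantees the $k$ denominator factors are pairwise different and hence can each be cancelled by a distinct numerator factor (multiplicities matter here, which is why the hypothesis is phrased in terms of multisets/sets rather than permitting $\alpha_j$'s to repeat). Once that is noted, the argument is a one-line degree count on a rational function that is actually a polynomial.
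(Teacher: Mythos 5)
Your proof is correct and follows essentially the same generating-function argument as the paper: interpret the sum as the coefficient of $t^s$ in $\prod_i (1+\beta_i^r t)/\prod_j(1+\alpha_j t)$, cancel denominator factors against numerator factors using the hypothesis, and observe the result is a polynomial of degree $n-k$. The paper's proof is identical in substance, just more terse.
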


\begin{proof}
The left-hand side is the coefficient of $t^s$ in the power series
\begin{equation}
\frac{\prod_{i = 1}^n (1 + t \beta_i^r)}{\prod_{i = 1}^k (1 + t \alpha_i)}.
\end{equation}
By assumption, every term in the denominator cancels with a distinct term in the numerator, so that this expression
is a polynomial in $t$ of degree $n-k$.  Since $s > n-k$, the coefficient of $t^s$ in this polynomial is $0$.
\end{proof}

In practice, our applications of Lemma~\ref{alternating-sum-lemma} will always involve one of the two
situations $\{\beta_1^r, \dots, \beta_n^r\} = \{\alpha_1, \dots, \alpha_k\}$ or 
$\{\beta_1^r, \dots, \beta_n^r\} = \{\alpha_1, \dots, \alpha_k, 0 \}$.

Let $\gamma = (\gamma_1, \dots, \gamma_n) \models n$ be a  composition with $n$ parts.
The {\em Demazure character} $\kappa_{\gamma}(\xx_n) \in \CC[\xx_n]$ is defined recursively
as follows.  If $\gamma_1 \geq \cdots \geq \gamma_n$, we let 
$\kappa_{\gamma}(\xx_n)$ be the monomial
\begin{equation}
\kappa_{\gamma}(\xx_n) = x_1^{\gamma_1} \cdots x_n^{\gamma_n}.
\end{equation}
In general, if $\gamma_i < \gamma_{i+1}$, we let
\begin{equation}
\kappa_{\gamma}(\xx_n) = \frac{ x_i (\kappa_{\gamma'}(\xx_n)) - x_{i+1} (s_i \cdot \kappa_{\gamma'}(\xx_n))}{x_i - x_{i+1}},
\end{equation}
where $\gamma' = (\gamma_1, \dots, \gamma_{i+1}, \gamma_i, \dots, \gamma_n)$ is the 
composition obtained by interchanging the $i^{th}$ and $(i+1)^{st}$ parts of $\gamma$
and $s_i \cdot \kappa_{\gamma'}(\xx_n)$ is the polynomial $\kappa_{\gamma'}(\xx_n)$ with $x_i$
and $x_{i+1}$ interchanged.
It can be shown that this recursion gives a well defined collection of polynomials 
$\{ \kappa_{\gamma}(\xx_n) \}$ indexed by compositions $\gamma$ with $n$ parts.
This set forms a basis for the polynomial ring $\CC[\xx_n]$.

Demazure characters played a key role  in \cite{HRS};
they will be equally important here.
In order to state the $G_n$-analogs of the lemmata from \cite{HRS} that we will need, we must 
introduce some notation.

\begin{defn}
Let $S = \{s_1 < s_2 < \cdots < s_m\} \subseteq [n]$.  The {\em skip monomial} $\xx(S)$ in $\CC[\xx_n]$ is 
\begin{equation*}
\xx(S) := x_{s_1}^{s_1} x_{s_2}^{s_2 - 1} \cdots x_{s_m}^{s_m - m + 1}.
\end{equation*}
The {\em skip composition} $\gamma(S) = (\gamma_1, \dots, \gamma_n)$ is the length $n$ composition defined by
\begin{equation*}
\gamma_i = \begin{cases}
0 & i \notin S \\
s_j - j + 1 & i = s_j \in S.
\end{cases} 
\end{equation*}
We also let $\overline{\gamma(S)} := (\gamma_n, \dots, \gamma_1)$ be the reverse of the skip composition $\gamma(S)$.
\end{defn}

For example, if $n = 8$ and $S = \{2,3,5,8\}$, then $\gamma(S) = (0,2,2,0,3,0,0,5)$ and
$\xx(S) = x_2^2 x_3^2 x_5^3 x_8^5$.
In general, we have that $\gamma(S)$ is the exponent vector of $\xx(S)$.
We will be interested in the $r^{th}$ powers $\xx(S)^r$ of  skip monomials in this paper.

Skip monomials are related to Demazure characters as follows.
For any polynomial $f(\xx_n) = f(x_1, \dots, x_n) \in \CC[\xx_n]$, let
$f(\xx_n^r) = f(x_1^r, \dots, x_n^r)$ and $\overline{f(\xx_n^r)} = f(x_n^r, \dots, x_1^r)$.
The following result is immediate from 
\cite[Lem. 3.5]{HRS} after the change of variables
$(x_1, \dots, x_n) \mapsto (x_1^r, \dots, x_n^r)$.

\begin{lemma}
\label{demazure-initial-term}
Let $n \geq k$ and let $S \subseteq [n]$ satisfy $|S| = n-k+1$.  Let $<$ be lexicographic order.  We have
\begin{equation}
\initial_<(\overline{\kappa_{\overline{\gamma(S}}(\xx_n^r)}) = \xx(S)^r.
\end{equation}
Moreover, for any $1 \leq i \leq n$ we have
\begin{equation}
x_i^{r \cdot (\max(S)-n+k+1)} \nmid m
\end{equation}
for any monomial $m$ appearing in $\overline{\kappa_{\overline{\gamma(S)}}(\xx_n^r)}$.  Finally, if $T \subseteq [n]$ 
satisfies $|T| = n-k+1$ and $T \neq S$, then $\xx(S)^r \nmid m$ for any monomial $m$ appearing in 
$\overline{\kappa_{\overline{\gamma(T)}}(\xx_n^r)}$.
\end{lemma}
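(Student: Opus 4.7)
The plan is to deduce the statement from the $r = 1$ case (which is \cite[Lem. 3.5]{HRS}) via the substitution $(x_1, \dots, x_n) \mapsto (x_1^r, \dots, x_n^r)$. By definition, $\kappa_{\overline{\gamma(S)}}(\xx_n^r)$ is the polynomial $\kappa_{\overline{\gamma(S)}}(\xx_n)$ evaluated at $x_i \mapsto x_i^r$, and the bar operation (reversal of the variables) commutes with this substitution. Consequently, the monomials $x_1^{a_1} \cdots x_n^{a_n}$ appearing in $\overline{\kappa_{\overline{\gamma(S)}}(\xx_n)}$ are in coefficient-preserving bijection with the monomials $x_1^{ra_1} \cdots x_n^{ra_n}$ appearing in $\overline{\kappa_{\overline{\gamma(S)}}(\xx_n^r)}$, and since the map $(a_1, \dots, a_n) \mapsto (ra_1, \dots, ra_n)$ on $\NN^n$ is injective, no cancellations can destroy this correspondence.

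Three compatibility facts about the substitution then deliver the three conclusions. First, lexicographic order is preserved under the componentwise scaling of exponent vectors, so the leading monomial of a substituted polynomial is the substitution of the original leading monomial; applied to $\initial_<(\overline{\kappa_{\overline{\gamma(S)}}(\xx_n)}) = \xx(S)$ from \cite{HRS}, this yields the first claim $\initial_<(\overline{\kappa_{\overline{\gamma(S)}}(\xx_n^r)}) = \xx(S)^r$. Second, $x_i^{rb}$ divides $x_1^{ra_1} \cdots x_n^{ra_n}$ if and only if $x_i^b$ divides $x_1^{a_1} \cdots x_n^{a_n}$, so the bound $x_i^{\max(S)-n+k+1} \nmid m$ from \cite{HRS} transports to the bound $x_i^{r(\max(S)-n+k+1)} \nmid m$ in our setting. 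Third, because $\xx(T)^r$ is obtained from $\xx(T)$ by raising every exponent to the $r$-th power, the divisibility $\xx(T)^r \mid x_1^{ra_1} \cdots x_n^{ra_n}$ is equivalent to $\xx(T) \mid x_1^{a_1} \cdots x_n^{a_n}$; the $T \neq S$ nondivisibility statement thus transports as well.

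In short, the entire argument will be a mechanical translation of \cite[Lem. 3.5]{HRS}. The only real obstacle is housekeeping: verifying that the substitution is monomial-injective (so that the correspondence of monomials is genuine) and that lex order, single-variable divisibility, and skip-monomial divisibility all transport cleanly. None of these is difficult, which is why it suffices to invoke the HRS result.
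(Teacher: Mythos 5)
Your proof is correct and takes exactly the paper's approach: the paper itself asserts that the lemma is ``immediate from \cite[Lem. 3.5]{HRS} after the change of variables $(x_1,\dots,x_n)\mapsto(x_1^r,\dots,x_n^r)$,'' and you have merely filled in the (short) verification that the substitution is monomial-injective and transports lex order, single-variable divisibility, and skip-monomial divisibility.
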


We also record the fact, which follows immediately from \cite{HRS}, that the polynomials
$\kappa_{\gamma(S)^*}(\xx_n^{r,*})$ appearing in Lemma~\ref{demazure-initial-term}
are contained in the ideals $I_{n,k}$ and $J_{n,k}$.
The following result follows from \cite[Eqn. 3.4]{HRS} after the change of variables
$(x_1, \dots, x_n) \mapsto (x_1^r, \dots, x_n^r)$.

\begin{lemma}
\label{demazures-in-ideal}
Let $n \geq k$ and let $S \subseteq [n]$ satisfy $|S| = n-k+1$.  The polynomial
$\overline{\kappa_{\overline{\gamma(S)}}(\xx_n^r)}$ is contained in the ideal
\begin{equation}
\langle e_n(\xx_n^r), e_{n-1}(\xx_n^r), \dots, e_{n-k+1}(\xx_n^r) \rangle \subseteq \CC[\xx_n].
\end{equation}
In particular, we have $\overline{\kappa_{\overline{\gamma(S)}}(\xx_n^r)} \in I_{n,k}$ and 
$\overline{\kappa_{\overline{\gamma(S)}}(\xx_n^{r})} \in J_{n,k}$.
\end{lemma}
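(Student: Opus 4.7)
The plan is to deduce the lemma from its $r=1$ predecessor, namely \cite[Eqn. 3.4]{HRS}, by applying the $\CC$-algebra homomorphism
\[
\phi : \CC[\xx_n] \longrightarrow \CC[\xx_n], \qquad x_i \longmapsto x_i^r.
\]
First I would recall the statement of \cite[Eqn. 3.4]{HRS}: for any $S \subseteq [n]$ with $|S| = n-k+1$, there is an expansion
\[
\overline{\kappa_{\overline{\gamma(S)}}(\xx_n)} \;=\; \sum_{j=n-k+1}^{n} f_j(\xx_n)\, e_j(\xx_n)
\]
with coefficients $f_j(\xx_n) \in \CC[\xx_n]$, so that $\overline{\kappa_{\overline{\gamma(S)}}(\xx_n)}$ lies in the ideal $\langle e_{n-k+1}(\xx_n), \dots, e_n(\xx_n) \rangle$.

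Next I would verify that $\phi$ is compatible with the two constructions appearing in the statement. For any polynomial $g(\xx_n)$, we have
\[
\phi\bigl(\,\overline{g(\xx_n)}\,\bigr) \;=\; \phi\bigl(g(x_n,\dots,x_1)\bigr) \;=\; g(x_n^r,\dots,x_1^r) \;=\; \overline{g(\xx_n^r)},
\]
so $\phi$ intertwines the overline (variable-reversal) operation with the substitution to $r^{th}$ powers. Similarly, $\phi(e_j(\xx_n)) = e_j(x_1^r, \dots, x_n^r) = e_j(\xx_n^r)$. Applying $\phi$ to the HRS identity therefore produces
\[
\overline{\kappa_{\overline{\gamma(S)}}(\xx_n^r)} \;=\; \sum_{j=n-k+1}^{n} f_j(\xx_n^r)\, e_j(\xx_n^r),
\]
which exhibits $\overline{\kappa_{\overline{\gamma(S)}}(\xx_n^r)}$ as an element of $\langle e_{n-k+1}(\xx_n^r), \dots, e_n(\xx_n^r) \rangle$. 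The remaining assertions $\overline{\kappa_{\overline{\gamma(S)}}(\xx_n^r)} \in I_{n,k}$ and $\in J_{n,k}$ then follow immediately from Definition~\ref{main-definition}, since both $I_{n,k}$ and $J_{n,k}$ contain each $e_j(\xx_n^r)$ for $n-k+1 \leq j \leq n$.

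There is no serious obstacle here; the only point requiring any care is to confirm that the overline operation commutes with $\phi$ as above, and that the $r^{th}$-power substitution preserves the elementary symmetric polynomials in the required way. Both are purely formal, so the content of the lemma is entirely absorbed by \cite[Eqn. 3.4]{HRS}. If one wished to make the argument self-contained, the main work would instead be reproving that symmetric-group identity from scratch, which in turn rests on the Demazure recursion together with an inductive unwinding of $\overline{\kappa_{\overline{\gamma(S)}}}$ against the Newton-type identity expressing powers of a variable in terms of the $e_j$'s; but given the cited result this step is unnecessary.
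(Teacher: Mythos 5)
Your proof is correct and matches the paper's approach exactly: the paper also obtains this lemma by applying the substitution $(x_1,\dots,x_n)\mapsto(x_1^r,\dots,x_n^r)$ to \cite[Eqn.~3.4]{HRS}. Your explicit verification that this substitution commutes with the overline operation and sends $e_j(\xx_n)$ to $e_j(\xx_n^r)$ is the (routine) content the paper leaves implicit.
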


We define two formal power series in the infinite variable set 
$\xx = (\xx^{(1)}, \dots, \xx^{(r)})$ using the $\coinv$ and $\comaj$ statistics on 
$r$-colored ordered multiset partitions.
If $\mu$ is an $r$-colored ordered multiset partition, let $\xx^{\mu}$ be the monomial
in the variable set $\xx$ where the exponent of $x_j^{(i)}$ is the number of occurrences 
of $j^{i-1}$ in $\mu$.

\begin{defn}
\label{m-and-i}
Let $r \geq 1$ and let $k \leq n$ be positive integers.  Define two formal power series in the variable set 
$\xx = (\xx^{(1)}, \dots, \xx^{(r)})$ by
\begin{align}
\bm{M_{n,k}}(\xx;q) &:= \sum_{\mu} q^{\maj(\mu)} \xx^{\mu}, \\
\bm{I_{n,k}}(\xx;q) &:= \sum_{\mu} q^{\coinv(\mu)} \xx^{\mu},
\end{align}
where the sum is over all $r$-colored ordered multiset partitions $\mu$ of size $n$ with $k$ blocks.
\end{defn}

The next result establishes that
the formal power series $\bm{M_{n,k}}(\xx;q), \bm{I_{n,k}}(\xx;q)$ in Definition~\ref{m-and-i}
both contained in the ring $\Lambda^r(\xx)$ and are related to each other by $q$-reversal.

\begin{lemma}
\label{m-equals-i}
Both of the formal power series $\bm{M_{n,k}}(\xx;q)$ and $\bm{I_{n,k}}(\xx;q)$ 
lie in the ring $\Lambda^r(\xx)$.  Moreover,
we have
$\bm{M_{n,k}}(\xx;q) = \rev_q (\bm{I_{n,k}}(\xx;q))$. 
\end{lemma}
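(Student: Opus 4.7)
The strategy is to exploit the decompositions
\begin{align*}
\maj(\mu) &= c(\mu) + r \cdot A(\mu),  \\
\coinv(\mu) &= [n(r-1) - c(\mu)] + r \cdot B(\mu),
\end{align*}
where $A(\mu)$ is the bracketed descent-sum term appearing in the definition of $\maj$ and $B(\mu)$ is the coinversion pair count. Since $c(\mu)$ is determined by the variable monomial $\xx^{\mu}$ (the color of $i^c$ records an exponent of $x^{(c+1)}_i$), the color contributions behave like fixed monomial prefactors once $\xx^{\mu}$ is fixed. The two claims then reduce to showing (i) symmetry of the $A$- and $B$-refined generating functions in each variable set $\xx^{(c)}$, and (ii) a reversal relation $A = B_{\max}(\xx^{\mu}) - B$ under a content-preserving bijection.

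\textbf{Symmetry.} For fixed $c$ and $j$, I would construct a Bender-Knuth style involution $\phi_{c,j}$ on $r$-colored ordered multiset partitions that swaps the multiplicities of $j^{c-1}$ and $(j+1)^{c-1}$, preserves the remaining variable content (hence $c(\mu)$), and preserves both $A$ and $B$. Blocks containing both $j^{c-1}$ and $(j+1)^{c-1}$ or neither are fixed; among remaining ``active'' blocks, a canonical left-to-right matching prescribes how many $j^{c-1}$'s to relabel as $(j+1)^{c-1}$ and vice versa. Because $c(\mu)$ and the color of every letter are preserved, only the combinatorial parts need to be checked, which reduces to tracking $\prec$-minima of active blocks.

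\textbf{Reversal.} For fixed content $\xx^{\bm{\alpha}}$, the color contributions to $\maj$ and $\coinv$ sum to the constant $n(r-1)$, independent of $\mu$. Hence $\bm{M_{n,k}}(\xx;q) = \rev_q(\bm{I_{n,k}}(\xx;q))$ is equivalent to showing that for each content class the distributions of $A$ and $B$ are related by $q$-reversal. I would establish this by adapting the bijection of \cite[Sec.~4]{Rhoades}, where an analogous identity is shown for uncolored ordered multiset partitions. Because $A$ and $B$ depend only on block membership, $\prec$-minima, and the underlying total order on $\AAA_r$ (the colors being fixed data), the uncolored bijection extends to the $r$-colored setting by letting it permute positions while transporting colors along with the letters at those positions.

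\textbf{Main obstacle.} The delicate point is verifying that $\phi_{c,j}$ preserves $B$. Swapping $j^{c-1}$ and $(j+1)^{c-1}$ in a block can change that block's $\prec$-minimum, since these letters are \emph{not} $\prec$-adjacent when $r \geq 2$: between them lie $j^{c-2}, \ldots, j^{0}$ and $(j{+}1)^{r-1}, \ldots, (j{+}1)^{c}$. This means individual block changes can create or destroy coinversion pairs, and I must verify that the left-to-right matching recipe causes these local changes to cancel globally. Handling this subtlety, and analogously verifying that the reversal bijection respects the coloring data in the right way, is the primary technical burden of the proof.
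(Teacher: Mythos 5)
Your proposal takes a genuinely different route from the paper's.  The paper does not construct any new involution or bijection: it observes that the colored statistics factor as $\maj(\mu) = c(\mu) + r\cdot A(\mu)$ and $\coinv(\mu) = [n(r-1) - c(\mu)] + r\cdot B(\mu)$ (the same decomposition you start from), then reinterprets the colored alphabet $\AAA_r$ as a single countably infinite uncolored alphabet $\zz = \{z_{i,j}\}$, and reduces the entire lemma to Wilson's known $r=1$ theorem.  The color contributions are absorbed by the substitutions $z_{i,j} \mapsto q^{i-1}x_j^{(i)}$ and $z_{i,j} \mapsto q^{r-i}x_j^{(i)}$; since $(i-1) + (r-i) = r-1$ is constant, the $q$-reversal interchanges with the substitution.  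This handles the symmetry in each $\xx^{(i)}$ automatically (it is inherited from symmetry of the $r=1$ generating function in the $\zz$ variables) and delivers the reversal identity in the same stroke, with no Bender--Knuth work at all.  What the paper's approach buys is precisely the avoidance of the obstacle you flag.

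The gap in your plan is the obstacle you yourself name and then leave unresolved.  Because $j^{c-1}$ and $(j+1)^{c-1}$ are not $\prec$-adjacent when $r\geq 2$, a Bender--Knuth swap within a block can change that block's $\prec$-minimum in a way that depends on letters of other colors that sit strictly between them, and you have not shown that the proposed left-to-right matching makes the resulting changes in $B$ cancel.  This is not a cosmetic difficulty: it is exactly the part of the argument the paper was designed to sidestep, and your proposal stops at the point where it would have to be solved.  Similarly, the "transport colors along with letters" heuristic for the reversal bijection is on the right track in spirit, but it glosses over how the $q$-powers attached to colors interact with $\rev_q$; the paper handles this carefully via the complementary substitutions $q^{i-1}$ versus $q^{r-i}$.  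To salvage your approach you would either need to actually construct and verify the non-adjacent involution, or — more economically — notice that fixing colors and tracking only $A$ and $B$ is tantamount to the change of variables the paper uses, and simply invoke the $r=1$ results rather than re-derive them.
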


\begin{proof}
The truth of this statement for $r = 1$ (when $\Lambda^r(\xx)$ is the usual
ring of symmetric functions) follows from the work of Wilson \cite{WMultiset}.  To deduce this statement for
general $r \geq 1$, consider a new countably infinite set of variables
\begin{equation}
\zz = \{z_{i,j} \,:\, j \in \ZZ_{> 0}, 1 \leq i \leq r \}.
\end{equation}
The association $z_{i,j} \leftrightarrow x_j^{(i)}$ gives a bijection with our collection of variables
$\xx = (\xx^{(1)}, \dots, \xx^{(r)})$.  The idea is to reinterpret $\bm{M_{n,k}}(\xx;q)$ and $\bm{I_{n,k}}(\xx;q)$ in terms of the 
new variable set $\zz$, and then apply the equality and symmetry known in the case $r = 1$.

To achieve the program of the preceding paragraph, we introduce the following notation.
Let $\bm{M_{n,k}^1}(\zz;q^r)$ be the formal power series 
\begin{equation}
\bm{M_{n,k}^1}(\zz;q^r) := \sum_{\mu} q^{r \cdot \maj(\mu)} \zz^{\mu},
\end{equation}
where the sum is over all ordered multiset partitions $\mu$ of size $n$ with $k$ blocks 
on the countably infinite alphabet
\begin{equation*}
1^{r-1} < 2^{r-1} < \cdots < 1^{r-2} < 2^{r-1} < \cdots < 1^0 < 2^0 < \cdots 
\end{equation*}
and we compute $\maj(\mu)$ as in the  $r = 1$ case (i.e., ignoring contributions to $\maj$ coming from colors,
and not multiplying descents by  $r$).

Similarly, let $\bm{I^1_{n,k}}(\zz;q^r)$ be the formal power series
\begin{equation}
\bm{I^1_{n,k}}(\zz;q) := \sum_{\mu} q^{r \cdot \coinv(\mu)} \xx^{\mu},
\end{equation}
where the sum is over all ordered multiset partitions $\mu$ of size $n$ with $k$ blocks 
on the countably infinite alphabet
\begin{equation*}
1^{r-1} \prec \cdots \prec 1^0 \prec 2^{r-1} \prec \cdots \prec 2^0 \prec \cdots
\end{equation*}
and we define $\coinv(\mu)$ as in the $r = 1$ case (i.e., ignoring the contribution to
$\coinv$ coming from colors, and not multiplying the number of coinversion pairs by $r$).

It follows from the definition of $\bm{M_{n,k}}(\xx;q)$ that
\begin{equation}
\label{maj-relation}
\bm{M_{n,k}}(\xx;q) =
\bm{M_{n,k}^1}(\zz;q^r) |_{z_{i,j} = q^{i-1} \cdot x_j^{(i)}}.
\end{equation}
This expression for $\bm{M_{n,k}}(\xx;q)$, together with the fact
that $\bm{M_{n,k}^1}(\zz;q^r)$ is symmetric in the $\zz$ variables, proves that 
$\bm{M_{n,k}}(\xx;q) \in \Lambda^r(\xx)$.
Similarly, we have
\begin{equation}
\label{inv-relation}
\bm{I_{n,k}}(\xx;q) =
\bm{I_{n,k}^1}(\zz;q^r) |_{z_{i,j} = q^{r-i} \cdot x_j^{(i)}},
\end{equation}
so that $\bm{I_{n,k}}(\xx;q) \in \Lambda^r(\xx)$.

Applying the lemma in the case  $r = 1$, we have
\begin{align}
\bm{M_{n,k}}(\xx;q) 
&= \bm{M_{n,k}^1}(z_{1,r}, z_{2,r}, \dots, z_{1,r-1}, z_{2,r-1}, \dots , z_{1,1}, z_{2,1}, \dots ;q^r) 
|_{z_{i,j} = q^{i-1} \cdot x_j^{(i)}} 
\\
&= \bm{M_{n,k}^1}(z_{1,r}, \dots, z_{1,1}, z_{2,r}, \dots, z_{2,1}, \dots; q^r) |_{z_{i,j} = q^{i-1} \cdot x_j^{(i)}} \\
&=  \rev_q \left[ \bm{I_{n,k}^1}(z_{1,r}, \dots, z_{1,1}, z_{2,r}, \dots, z_{2,1}, \dots; q^r)  
\right]|_{z_{i,j} = q^{i-1} \cdot x_j^{(i)}} \\
&= \rev_q \left[ \bm{I_{n,k}^1}(z_{1,r}, \dots, z_{1,1}, z_{2,r}, \dots, z_{2,1}, \dots; q^r)|_{z_{i,j} = q^{r-i} \cdot x_j^{(i)}}
\right] \\
&= \rev_q(\bm{I_{n,k}}(\xx;q)).
\end{align}
The first equality is Equation~\ref{maj-relation}, the second equality 
uses the fact that $\bm{M_{n,k}^1}(\zz;q)$ is symmetric in the $\zz$ variables,  the third equality uses the fact that 
$\bm{M_{n,k}^1}(\zz;q) = \rev_q(\bm{I_{n,k}^1}(\zz;q))$,
 the fourth equality interchanges evaluation and $q$-reversal, and the final equality 
 is Equation~\ref{inv-relation}.
\end{proof}

The power series in Lemma~\ref{m-equals-i} will be (up to minor transformations) the graded Frobenius character
of the ring $S_{n,k}$.  
We give this character-to-be a name.

\begin{defn}
Let $r \geq 1$ and let $k \leq n$ be positive integers.  Let $\bm{D_{n,k}}(\xx;q) \in \Lambda^r(\xx)$ be the common 
ring element
\begin{equation}
\bm{D_{n,k}}(\xx;q) := (\rev_q \circ \omega) \bm{M_{n,k}}(\xx;q) =  \omega \bm{I_{n,k}}(\xx;q).
\end{equation}
\end{defn}

As a Frobenius character, the ring element $\bm{D_{n,k}}(\xx;q) \in \Lambda^r(\xx)$ must expand positively in the Schur
basis  $\{ \bm{s_{\lambda}}(\xx) \,:\, \blambda \vdash_r n \}$.  The $\maj$ formulation of $\bm{D_{n,k}}(\xx;q)$ 
is well suited to proving this fact directly, as well as giving the Schur expansion of $\bm{D_{n,k}}(\xx;q)$.
The following proposition is a colored version of a result of Wilson \cite[Thm. 5.0.1]{WMultiset}.

\begin{proposition}
\label{d-schur-expansion}
Let $r \geq 1$ and let $k \leq n$ be positive integers.  We have the Schur expansion
\begin{equation}
\bm{D_{n,k}}(\xx;q) = \rev_q \left[\sum_{\bT \in \SYT^r(n)} q^{\maj(\bT) + r {n-k \choose 2} - r  (n-k)  \des(\bT)} 
{\des(\bT) \brack n-k}_{q^r} \bm{s_{\shape(\bT)'}}(\xx) \right].
\end{equation}
\end{proposition}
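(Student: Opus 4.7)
The goal is to establish a Schur expansion of $\bm{D_{n,k}}(\xx;q)$. Since $\bm{D_{n,k}}(\xx;q) = (\rev_q \circ \omega) \bm{M_{n,k}}(\xx;q)$ and $\omega$ acts componentwise, sending $\bm{s_\blambda}(\xx) \mapsto \bm{s_{\blambda'}}(\xx)$, applying $\rev_q$ to both sides of the claim and then $\omega$ shows that the proposition is equivalent to
$$\bm{M_{n,k}}(\xx;q) = \sum_{\bT \in \SYT^r(n)} q^{\maj(\bT) + r\binom{n-k}{2} - r(n-k)\des(\bT)} {\des(\bT) \brack n-k}_{q^r} \bm{s_{\shape(\bT)}}(\xx).$$

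The plan is to bootstrap from the $r=1$ case---Wilson's theorem \cite[Thm.~5.0.1]{WMultiset}---using the substitution machinery from the proof of Lemma~\ref{m-equals-i}. Specifically, write $\bm{M_{n,k}}(\xx;q) = \bm{M_{n,k}^1}(\zz;q^r)|_{z_{i,j} = q^{i-1} x_j^{(i)}}$ and apply Wilson's theorem to expand $\bm{M_{n,k}^1}(\zz;q^r)$ as a sum over ordinary standard Young tableaux $T$ with $n$ boxes involving $s_{\shape(T)}(\zz)$. After the substitution, each $s_\lambda(\zz) = s_\lambda(\xx^{(1)} \sqcup \cdots \sqcup \xx^{(r)})$ becomes $s_\lambda(\xx^{(1)}, q \xx^{(2)}, \dots, q^{r-1}\xx^{(r)})$, which expands in the $\bm{s_{\bm{\mu}}}(\xx)$-basis as
$$s_\lambda(\xx^{(1)}, q\xx^{(2)}, \ldots, q^{r-1}\xx^{(r)}) = \sum_{\bm{\mu} \vdash_r |\lambda|} q^{\sum_i (i-1)|\mu^{(i)}|}\, c^\lambda_{\mu^{(1)}, \ldots, \mu^{(r)}} \, \bm{s_{\bm{\mu}}}(\xx),$$
where $c^\lambda_{\mu^{(1)}, \ldots, \mu^{(r)}}$ is the generalized Littlewood--Richardson coefficient extracting $\prod_i s_{\mu^{(i)}}(\xx^{(i)})$ from $s_\lambda(\xx^{(1)} \sqcup \cdots \sqcup \xx^{(r)})$, and we have used the homogeneity $s_\mu(c\xx) = c^{|\mu|}s_\mu(\xx)$.

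After interchanging summations and collecting the coefficient of each $\bm{s_{\bm{\mu}}}(\xx)$, the proposition reduces to the combinatorial identity
$$\sum_{\lambda \vdash n}\sum_{T} c^\lambda_{\mu^{(1)}, \ldots, \mu^{(r)}}\, q^{r \maj(T) - r(n-k)\des(T)} {\des(T) \brack n-k}_{q^r} = \sum_{\substack{\bT \in \SYT^r(n) \\ \shape(\bT) = \bm{\mu}}} q^{r \sum_{i \in \Des(\bT)} i - r(n-k)\des(\bT)} {\des(\bT) \brack n-k}_{q^r}$$
for every $\bm{\mu} \vdash_r n$, with $T$ ranging over ordinary SYT of shape $\lambda$. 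The factor $q^{\sum_i (i-1)|\mu^{(i)}|}$ produced by the Schur substitution matches exactly the color-contribution term $\sum_j (j-1)|T^{(j)}|$ built into the paper's definition of $\maj(\bT)$, so it has already been absorbed on the right-hand side.

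The main obstacle is establishing this combinatorial identity. I expect it to follow from a statistic-preserving bijection between (i) pairs $(T, L)$ consisting of an ordinary SYT $T$ of some shape $\lambda \vdash n$ together with LR data $L$ realizing the coefficient $c^\lambda_{\mu^{(1)}, \ldots, \mu^{(r)}}$, and (ii) colored tableaux $\bT \in \SYT^r(n)$ of shape $\bm{\mu}$.  A natural construction interprets the LR data as a rule for standardizing the colored filling $\bT$: the entries of $T^{(i)}$ are precisely those $m \in [n]$ assigned to color $i-1$, and $T$ is recovered by straightening the resulting colored filling of $\lambda$.  The paper's definition of $\Des(\bT)$---$m$ is a descent when $m+1$ lies in a later component or strictly below $m$ in the same component---is tailored so that Wilson's descent/major statistics on $T$ match $\Des(\bT)$ and $\sum_{i \in \Des(\bT)} i$.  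Unwinding these definitions carefully (and checking that the generalized LR coefficient exactly counts the number of chains of intermediate shapes arising in the standardization) completes the proof.
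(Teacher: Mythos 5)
Your approach is genuinely different from the paper's. The paper applies \emph{colored} RSK directly to the set $\WWW_n$ of length-$n$ words in the alphabet $\AAA_r$, invoking the fact that the recording tableau $\bT$ of a word $w$ satisfies $\Des(w)=\Des(\bT)$, and then distributing the stars; this gives the Schur expansion in one pass with no reduction to the $r=1$ case. You instead feed Wilson's $r=1$ theorem through the substitution $z_{i,j}=q^{i-1}x_j^{(i)}$ from the proof of Lemma~\ref{m-equals-i} and then expand $s_\lambda(\xx^{(1)},q\xx^{(2)},\dots,q^{r-1}\xx^{(r)})$ in the basis $\{\bm{s_{\bm\mu}}(\xx)\}$ using homogeneity and the Littlewood--Richardson rule. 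The two routes are both legitimate; yours has the advantage of making the dependence on the $r=1$ result explicit, while the paper's is more self-contained and avoids any discussion of LR coefficients.

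The gap you flag at the end is real but fillable, and I would push you to be more precise rather than writing ``I expect it to follow.'' After the factor $q^{\sum_i(i-1)|\mu^{(i)}|}$ is absorbed into $\maj(\bT)$, your combinatorial identity is equivalent, for each $r$-partition $\bm\mu\vdash_r n$ and each $D\subseteq[n-1]$, to
\begin{equation*}
\sum_{\lambda\vdash n} c^{\lambda}_{\mu^{(1)},\dots,\mu^{(r)}}\,\bigl|\{T\in\SYT(\lambda):\Des(T)=D\}\bigr|
\;=\;\bigl|\{\bT\in\SYT^r(n):\shape(\bT)=\bm\mu,\ \Des(\bT)=D\}\bigr|.
\end{equation*}
This is precisely the fundamental quasisymmetric expansion of the product $s_{\mu^{(1)}}\cdots s_{\mu^{(r)}}$, obtained by viewing $\SYT^r(\bm\mu)$ as standard tableaux of the disjoint-union skew shape $\mu^{(1)}\oplus\cdots\oplus\mu^{(r)}$ (the paper's definition of $\Des(\bT)$ is exactly the skew-tableau descent set for this shape) and then rectifying via jeu de taquin, which is a descent-set-preserving bijection onto pairs consisting of a straight-shape $T_0\in\SYT(\lambda)$ and a rectification class counted by $c^{\lambda}_{\mu^{(1)},\dots,\mu^{(r)}}$. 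With that identity cited or proved, your reduction is complete and your argument is correct. Note, however, that establishing descent-preservation for jeu de taquin is itself typically done via (an argument equivalent to) RSK, so in the end you have not avoided the machinery the paper uses --- you have only rearranged where it is invoked.
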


\begin{proof}
Consider the collection $\WWW_n$ of all length $n$ words $w = w_1 \dots w_n$ in the alphabet of
$r$-colored positive integers.  
For any word $w \in \WWW_n$, the (colored version of the) {\em RSK correspondence} gives a pair of
$r$-tableaux $(\bm{U}, \bT)$ of the same shape, with $\bm{U}$ semistandard and $\bT$ standard.  
For example, if $r = 3$ and $w = 2^0 1^1 4^1 2^2 1^0 2^0 2^1 1^2 \in \WWW_8$ then 
$w \mapsto (\bm{U}, \bT)$ where
\begin{small}
\begin{equation*}
\bm{U} = \, 
\begin{Young}
1 & 2 \\
2
\end{Young}, \hspace{0.1in}
\begin{Young}
1 & 2 \\
4
\end{Young}, \hspace{0.1in}
\begin{Young}
1 \\ 2
\end{Young} \hspace{0.3in} \bT = \,
\begin{Young}
1 & 6 \\ 5
\end{Young}, \hspace{0.1in}
\begin{Young}
2 & 3 \\ 7 \end{Young}, \hspace{0.1in}
\begin{Young}
4 \\ 8
\end{Young} \, .
\end{equation*} 
\end{small}
The RSK map gives a bijection
\begin{equation}
\WWW_n \xrightarrow{\sim} \left\{ (\bm{U}, \bT) \,:\, 
\begin{array}{c}
\text{$\bm{U}$ a semistandard $r$-tableau with $n$ boxes,} \\
\text{$\bT$ a standard $r$-tableau with $n$ boxes,} \\
\text{$\shape(\bm{U}) = \shape(\bT)$}
\end{array}  \right\}.
\end{equation}
If $w \mapsto (\bm{U}, \bT)$, then $\Des(w) = \Des(\bT)$ so that $\maj(w) = \maj(\bT)$.

For any word $w \in \WWW_n$, we can generate a collection of ${\des(w) \choose n-k}$
$r$-colored ordered multiset partitions $\mu$ as follows. 
 Among the $\des(w)$ descents of $w$, choose $n-k$ of them to star, yielding a 
pair $(w, S)$ where $S \subseteq \Des(w)$ satisfies $|S| = n-k$.  We may identify $(w, S)$ with an 
$r$-colored ordered multiset partition $\mu$.

The above paragraph
implies that
\begin{equation}
\label{first-m-equation}
\bm{M_{n,k}}(\xx;q) = \sum_{w \in \WWW_n^r} q^{\maj(w) + r {n-k \choose 2} - r(n-k)\des(w)} 
{\des(w) \brack n-k}_{q^r} \xx^w,
\end{equation}
where the factor $q^{r {n-k \choose 2} - r(n-k)\des(w)} {\des(w) \brack n-k}_{q^r}$
is generated by the ways in which $n-k$  stars can be placed 
among the $\des(w)$ descents of $w$.

Applying RSK to the right-hand side of Equation~\ref{first-m-equation}, we deduce that
\begin{equation}
\bm{M_{n,k}}(\xx;q) = \sum_{\bT \in \SYT^r(n)} q^{\maj(\bT) - r {n-k \choose 2} + r(n-k)\des(\bT)} 
{ \des(\bT) \brack n-k}_{q^{r}} \bm{s_{\shape(\bT)}}(\xx).
\end{equation}
Since $\bm{D_{n,k}}(\xx;q) = (\rev_q \circ \omega) \bm{M_{n,k}}(\xx;q)$, we are done.
\end{proof}

Our basic tool for proving that $\bm{D_{n,k}}(\xx;q) = \grFrob(S_{n,k};q)$ will be the following lemma,
which is a colored version of \cite[Lem. 3.6]{HRS}.

\begin{lemma}
\label{e-perp-lemma} 
Let $\bm{F(\xx)}, \bm{G(\xx)} \in \Lambda^r(\xx)$ have equal constant terms.  Then 
$\bm{F(\xx)} = \bm{G(\xx)}$ if and only if
$e_j(\xx^{(i^*)})^{\perp} \bm{F(\xx)} = e_j(\xx^{(i^*)})^{\perp} \bm{G(\xx)}$ for all $j \geq 1$ and $1 \leq i \leq r$.
\end{lemma}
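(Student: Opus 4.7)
The \emph{only if} direction is immediate from the definition of $e_j(\xx^{(i^*)})^\perp$: if $\bm{F(\xx)} = \bm{G(\xx)}$, then the two adjoint operators applied to them agree trivially. All the content is in the \emph{if} direction.

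For this, I would set $\bm{H(\xx)} := \bm{F(\xx)} - \bm{G(\xx)}$. By hypothesis $\bm{H(\xx)}$ has zero constant term, and $e_j(\xx^{(i^*)})^\perp \bm{H(\xx)} = 0$ for all $j \geq 1$ and $1 \leq i \leq r$. Since $i \mapsto i^*$ is a permutation of $\{1,\dots,r\}$, this is equivalent to saying $e_j(\xx^{(i)})^\perp \bm{H(\xx)} = 0$ for every $j \geq 1$ and $1 \leq i \leq r$. Unpacking the definition of the adjoint, this says
\begin{equation*}
\langle \bm{H(\xx)}, e_j(\xx^{(i)}) \cdot \bm{K(\xx)} \rangle = 0 \qquad \text{for every } j \geq 1,\ 1 \leq i \leq r,\ \bm{K(\xx)} \in \Lambda^r(\xx).
\end{equation*}
In other words, $\bm{H(\xx)}$ is orthogonal (under the Hall pairing on $\Lambda^r(\xx)$) to the ideal generated inside $\Lambda^r(\xx)$ by all the $e_j(\xx^{(i)})$.

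The crux of the argument is the identification of this ideal. Because each factor $\Lambda(\xx^{(i)})$ is the polynomial algebra $\CC[e_1(\xx^{(i)}), e_2(\xx^{(i)}),\dots]$, their tensor product $\Lambda^r(\xx)$ is the polynomial algebra generated by the collection $\{e_j(\xx^{(i)}) : j \geq 1,\ 1 \leq i \leq r\}$. Consequently, the ideal these generate is exactly the augmentation ideal of $\Lambda^r(\xx)$, i.e.\ the subspace of elements with zero constant term. Therefore $\bm{H(\xx)}$ pairs to $0$ against every element of zero constant term.

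It remains to see that $\bm{H(\xx)}$ also pairs to $0$ against the constant $1 = \bm{h_{\emptyset}}(\xx)$: writing $\bm{H(\xx)} = \sum_{\bm{\mu}} c_{\bm{\mu}} \bm{m_{\mu^*}}(\xx)$, the duality $\langle \bm{h_\lambda}, \bm{m_{\mu^*}} \rangle = \delta_{\bm{\lambda}, \bm{\mu}}$ gives $\langle 1, \bm{H(\xx)} \rangle = c_{\emptyset}$, which is the constant term of $\bm{H(\xx)}$ and hence $0$. Since $\Lambda^r(\xx) = \CC \cdot 1 \oplus (\text{augmentation ideal})$, I conclude that $\langle \bm{H(\xx)}, \bm{K(\xx)} \rangle = 0$ for every $\bm{K(\xx)} \in \Lambda^r(\xx)$, and non-degeneracy of the Hall pairing (visible from $\langle \bm{s_\lambda}, \bm{s_{\mu^*}} \rangle = \delta_{\bm{\lambda},\bm{\mu}}$) forces $\bm{H(\xx)} = 0$, i.e.\ $\bm{F(\xx)} = \bm{G(\xx)}$. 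The only step requiring care is the identification of the ideal generated by the $e_j(\xx^{(i)})$ with the augmentation ideal, which rests on the polynomial-ring presentation of $\Lambda^r(\xx)$.
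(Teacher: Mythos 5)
Your proof is correct and takes essentially the same route as the paper's: both arguments rest on the nondegeneracy of the Hall pairing and the fact that $\Lambda^r(\xx)$ is a polynomial ring on $\{e_j(\xx^{(i)})\}$, so these elements generate the augmentation ideal. The paper packages this by showing $\langle \bm{F(\xx)}, \bm{e_{\blambda}}(\xx)\rangle = \langle \bm{G(\xx)}, \bm{e_{\blambda}}(\xx)\rangle$ for every $r$-partition $\blambda$ (peeling off a factor $e_{\lambda^{(i)}_j}(\xx^{(i)})$), whereas you phrase it as $\bm{H} = \bm{F} - \bm{G}$ being orthogonal to the augmentation ideal and to $1$; these are the same computation.
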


\begin{proof}
The forward direction is obvious.  For the reverse direction, let $\blambda$ be any $r$-partition, let 
$j \geq 1$, and let $1 \leq i \leq r$.  We have
\begin{align}
\langle \bm{F(\xx)}, e_j(\xx^{(i^*)}) \bm{e_{\blambda}}(\xx) \rangle &= 
\langle e_j(\xx^{(i^*)})^{\perp} \bm{F(\xx)}, \bm{e_{\blambda}(\xx)} \rangle \\
&= \langle e_j(\xx^{(i^*)})^{\perp} \bm{G(\xx)}, \bm{e_{\blambda}(\xx)} \rangle \\
&= \langle \bm{G(\xx)}, e_j(\xx^{(i^*)}) \bm{e_{\blambda}}(\xx) \rangle.
\end{align}
Since $\langle \bm{F(\xx)}, \bm{e_{\bm{\varnothing}}}(\xx) \rangle = 
\langle \bm{G(\xx)}, \bm{e_{\bm{\varnothing}}}(\xx) \rangle$
by assumption (where $\bm{\varnothing} = (\varnothing, \dots, \varnothing)$ is the empty $r$-partition),
this chain of equalities implies that
$\langle \bm{F(\xx)}, \bm{e_{\blambda}}(\xx) \rangle = 
\langle \bm{G(\xx)}, \bm{e_{\blambda}(\xx)} \rangle$ for any $r$-partition
$\blambda$.  We conclude that $\bm{F(\xx)} = \bm{G(\xx)}$.
\end{proof}

We will show that $\bm{D_{n,k}}(\xx;q)$ and $\grFrob(S_{n,k};q)$ satisfy the conditions of 
Lemma~\ref{e-perp-lemma} by showing that their images under $e_j(\xx^{(i^*)})^{\perp}$
satisfy the same recursion.
The $\coinv$ formulation of $\bm{D_{n,k}}(\xx;q)$ is best suited to calculating 
$e_j(\xx^{(i^*)})^{\perp}$.  The following lemma is a colored version of \cite[Lem. 3.7]{HRS}.

\begin{lemma}
\label{d-under-e-perp}
Let $r \geq 1$ and let $k \leq n$ be positive integers.  Let $1 \leq i \leq r$ and let $j \geq 1$.  
We have 
\begin{equation}
e_j(\xx^{(i^*)})^{\perp} \bm{D_{n,k}}(\xx;q) = q^{j \cdot (r-i) + r \cdot {j \choose 2}} {k \brack j}_{q^r}
\sum_{m = \max(1,k-j)}^{\min(k,n-j)} q^{r \cdot (k-m) \cdot (n-j-m)} {j \brack k-m}_{q^r} \bm{D_{n-j,m}}(\xx;q).
\end{equation}
\end{lemma}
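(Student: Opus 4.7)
The plan is as follows. First, reduce the claim to $\bm{I_{n,k}}$ via $\omega$. Since $\omega$ is an isometry of the twisted Hall inner product on $\Lambda^r(\xx)$ (governed by $\langle \bm{s_\lambda}, \bm{s_{\mu^*}} \rangle = \delta_{\blambda, \bm{\mu}}$) and $e_j = \omega h_j$ componentwise, one verifies $e_j(\xx^{(i^*)})^\perp \omega = \omega h_j(\xx^{(i^*)})^\perp$; combined with $\bm{D_{n,k}} = \omega \bm{I_{n,k}}$ this reduces the lemma to the analogous identity with $\bm{D}$'s replaced by $\bm{I}$'s and $e_j$ by $h_j$. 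A direct Schur-basis calculation then shows that, under the twisted inner product convention, $h_j(\xx^{(i^*)})^\perp$ acts on $\Lambda^r(\xx)$ exactly as the ordinary $h_j(\xx^{(i)})^\perp$ on the $i$-th tensor factor.

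Next, I interpret this combinatorially. Via the plethystic formula $F[\yy + t] = \sum_j (h_j(\yy)^\perp F)\, t^j$, the operator $h_j(\xx^{(i)})^\perp$ applied to $\bm{I_{n,k}}$ is the coefficient of $t^j$ after adjoining a variable $t$ to the alphabet $\xx^{(i)}$. Since $\xx^{(i)}$ indexes letters of color $i-1$, this equals $\sum_\mu q^{\coinv(\mu)}\xx^{\mu\setminus\infty^{i-1}}$, summed over $r$-colored ordered multiset partitions $\mu$ of size $n$ with $k$ blocks in the alphabet augmented by a single new letter $\infty^{i-1}$ (taken to be $\prec$-maximal), appearing exactly $j$ times in $\mu$. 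Given such a $\mu$, removing all $\infty^{i-1}$'s and dropping the resulting empty blocks yields an $r$-colored ordered multiset partition $\mu'$ of size $n-j$ with $m$ blocks, where $k-m$ counts the singleton $\{\infty^{i-1}\}$-blocks of $\mu$; this forces $\max(1,k-j) \leq m \leq \min(k,n-j)$ (the boundary $n=j$ uses the convention $\bm{I_{0,0}} = 1$). The inverse data consists of $T \subseteq [k]$ of size $k-m$ (positions of singleton blocks) and $U' \subseteq [m]$ of size $j+m-k$ (which blocks of $\mu'$ receive an $\infty^{i-1}$).

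I then decompose $\coinv(\mu) - \coinv(\mu')$ into contributions. The color shift $c(\mu) = c(\mu') + j(i-1)$ contributes $q^{j(r-i)}$ through the $[n(r-1)-c(\mu)]$ term of $\coinv$. A case analysis, using that $\infty^{i-1}$ is $\prec$-minimal in its block iff the block is a singleton, shows that coinversion pairs among the $j$ copies of $\infty^{i-1}$ contribute $r[\binom{j}{2}-\binom{j+m-k}{2}]$, while pairs between $\infty^{i-1}$'s and non-$\infty$ letters yield a configuration-independent $r(k-m)(n-j-m)$ (each of the $n-j-m$ non-$\prec$-minimum non-$\infty$ letters pairs with each of the $k-m$ singletons) plus a configuration-dependent $r\sum_{\ell=1}^{k-m}(t_\ell-\ell) + r\sum_{\ell=1}^{j+m-k}(u'_\ell-\ell)$, where $T = \{t_1<\cdots<t_{k-m}\}$ and $U' = \{u'_1<\cdots<u'_{j+m-k}\}$. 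Summing the configuration-dependent weight over $(T,U')$ produces ${k \brack k-m}_{q^r}\, {m \brack j+m-k}_{q^r}$, and the $q$-multinomial identity
\[
{k \brack k-m}_q\, {m \brack j+m-k}_q \;=\; {k \brack j}_q\, {j \brack k-m}_q
\]
combines all factors into $q^{j(r-i)+r\binom{j}{2}} \cdot q^{r(k-m)(n-j-m)} \cdot {k \brack j}_{q^r}\, {j \brack k-m}_{q^r} \cdot \bm{I_{n-j,m}}$, which yields the formula upon summing over $m$. The main obstacle is this case analysis: reconciling the two orders $<$ and $\prec$, correctly handling equal-letter coinv pairs among the $\infty^{i-1}$'s (whose status depends on the singleton-vs-mixed nature of both blocks and their relative left-right positions), and verifying that the $\binom{j+m-k}{2}$ deficit from pairs-among-$\infty$ is precisely compensated by the configuration sum to produce $\binom{j}{2}$ overall. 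The argument parallels \cite[Lem.~3.7]{HRS} in the $r=1$ case, with the extra color data absorbed into the single factor $q^{j(r-i)}$.
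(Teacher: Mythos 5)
Your approach — apply $\omega$ to reduce to $h_j(\xx^{(i^*)})^{\perp}\bm{I_{n,k}}$, interpret $h_j^{\perp}$ by adjoining a $\prec$-maximal letter $\infty^{i-1}$ to the alphabet, and analyze the change in $\coinv$ when the $j$ copies of $\infty^{i-1}$ are stripped out — is the same as the paper's. (The paper instead extracts coefficients by pairing with $\bm{h_{\widehat{\beta}^*}}(\xx)$, but this is cosmetically different from your plethystic reformulation; the combinatorial core is identical.)

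However, there is a computational slip in your $\coinv$ bookkeeping that, as written, makes the exponent come out short by $r\binom{j+m-k}{2}$. Your count of coinversion pairs between an nminb (an $\infty^{i-1}$ sitting in one of the $m$ nonsingleton blocks, say the one indexed by $u'_{\ell} \in [m]$) and the mins letters to its left should be $u'_{\ell}-1$, not $u'_{\ell}-\ell$: the $\mu'$-blocks preserve their relative order inside $\mu$, so there are exactly $u'_{\ell}-1$ of them to the left of block $u'_{\ell}$, each contributing one mins. (Contrast with your minb count $t_{\ell}-\ell$, which is correct: among the $t_{\ell}-1$ blocks to the left of the $\ell$-th singleton, exactly $\ell-1$ are themselves singletons and do not contain a mins.) Consequently the configuration-dependent nminb-mins contribution is $r\sum_{\ell}(u'_{\ell}-1) = r\sum_{\ell}(u'_{\ell}-\ell) + r\binom{j+m-k}{2}$, and it is this extra $r\binom{j+m-k}{2}$ that converts your $r[\binom{j}{2}-\binom{j+m-k}{2}]$ (from pairs among the $\infty$'s) to the $r\binom{j}{2}$ appearing in the lemma. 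You flag this as something ``to verify,'' but the formulas you actually list produce ${k \brack k-m}_{q^r}{m \brack j+m-k}_{q^r}$ with no compensating power of $q$, so the discrepancy is unresolved in the proposal as written. The fix is local, and with $\sum(u'_{\ell}-\ell)$ replaced by $\sum(u'_{\ell}-1)$ everything matches the paper.
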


\begin{proof}
Applying $\omega$ to both sides of the purported identity, it suffices to prove 
\begin{equation}
\label{h-equation}
h_j(\xx^{(i^*)})^{\perp} \bm{I_{n,k}}(\xx;q) = 
q^{j \cdot (r-i) + r \cdot {j \choose 2}} {k \brack j}_{q^r}
\sum_{m = \max(1,k-j)}^{\min(k,n-j)} q^{r \cdot (k-m) \cdot (n-j-m)} {j \brack k-m}_{q^r} \bm{I_{n-j,m}}(\xx;q).
\end{equation}

Since the bases $\{ \bm{h_{\blambda}}(\xx) \}$ and $\{ \bm{m_{\blambda^*}}(\xx) \}$ are dual bases
for $\Lambda^r(\xx)$ under the Hall inner product, for any  $\bm{F(\xx)} \in \Lambda^r(\xx)$
and any $r$-composition $\bbeta$, we have
\begin{equation}
\label{coefficient-extraction}
\langle \bm{F(\xx)}, \bm{h_{\bbeta^*}}(\xx) \rangle = \text{coefficient of $\xx^{\bbeta}$ in $\bm{F}(\xx)$}.
\end{equation}
Equation~\ref{coefficient-extraction} is our tool for proving Equation~\ref{h-equation}.

Let $\bbeta = (\beta^{(1)}, \dots, \beta^{(r)})$ be an $r$-composition and consider the inner product
\begin{equation}
\label{h-inner-product}
\langle h_j(\xx^{(i^*)})^{\perp} \bm{I_{n,k}}(\xx;q), \bm{h_{\bbeta^{*}}}(\xx) \rangle = 
\langle \bm{I_{n,k}}(\xx;q), h_j(\xx^{(i^*)}) \bm{h_{\bbeta^*}}(\xx) \rangle.
\end{equation}
We may write $h_j(\xx^{(i^*)}) \bm{h_{\bbeta^*}}(\xx) = \bm{h_{\bm{\widehat{\beta}}^*}}(\xx)$, where
 
\begin{itemize}
\item
$\bm{\widehat{\beta}} = (\beta^{(1)}, \dots, \widehat{\beta}^{(i)}, \dots, \beta^{(r)})$ is an $r$-composition which agrees with 
$\bbeta$ in every component except for $i$, and 
\item
$\widehat{\beta}^{(i)} = (\beta^{(i)}_1, \beta^{(i)}_2, \dots, 0 ,\dots, 0, j)$,
where the composition $\widehat{\beta}^{(i)}$ has $N$ parts for some positive
integer $N$ larger than the number of parts in any of $\beta^{(1)}, \dots, \beta^{(r)}$.
\end{itemize}
By Equation~\ref{coefficient-extraction}, we can interpret 
$\langle \bm{I_{n,k}}(\xx), h_j(\xx^{(i^*)}) \bm{h_{\bbeta^*}}(\xx) \rangle = 
\langle \bm{I_{n,k}}(\xx), \bm{h_{\bm{\widehat{\beta}}^*}}(\xx) \rangle$
combinatorially.

For any $r$-composition $\bm{\alpha} = (\alpha^{(1)}, \dots, \alpha^{(r)})$, 
let $\OP_{\bm{\alpha},k}$ be the collection of $r$-colored ordered multiset partitions with $k$ blocks which contain
$\alpha^{(i)}_j$ copies of the letter $j^{i-1}$.  Equation~\ref{coefficient-extraction} implies
\begin{equation}
\label{combinatorial-coefficient-extraction}
\langle \bm{I_{n,k}}(\xx), \bm{h_{\bm{\widehat{\beta}}^*}}(\xx) \rangle = 
\sum_{\mu \in \OP_{\bm{\widehat{\beta}},k}} q^{\coinv(\mu)}.
\end{equation}

Let us analyze the right-hand side of Equation~\ref{combinatorial-coefficient-extraction}.
A typical element $\mu \in \OP_{\bm{\widehat{\beta}},k}$ contains $j$ copies of the {\em big letter} $N^{i-1}$, together
with various other {\em small letters}.
Recall that the statistic $\coinv$ is defined using the order $\prec$, which prioritizes letter value over color.
 Our choice of $N$ guarantees that every small letter is $\prec N^{i-1}$.
We have a map
\begin{equation}
\varphi: \OP_{\bm{\widehat{\beta}},k} \rightarrow \bigcup_{m = \max(1,k-j)}^{\min(k,n-j)} \OP_{\bbeta,m},
\end{equation}
where $\varphi(\mu)$ is the $r$-colored ordered multiset partition obtained by erasing all $j$ 
of the big letters $N^{i-1}$
in $\mu$ (together with any singleton blocks $\{N^{i-1}\}$).  Let us analyze the effect of $\varphi$ on $\coinv$.

Fix $m$ in the range $\max(1,k-j) \leq m \leq \min(k,n-j)$ and let $\mu \in \OP_{\bbeta,m}$.
Then any $\mu' \in \varphi^{-1}(\mu)$ is obtained by adding $j$ copies of the big letter $N^{i-1}$
to $\mu$, precisely $k-m$ of which must be added in singleton blocks.
We calculate $\sum_{\mu' \in \varphi^{-1}(\mu)} q^{\coinv(\mu')}$ in terms of $\coinv(\mu)$ as follows.

Following the notation of the proof of \cite[Lem. 3.7]{HRS},
let us call a big letter $N^{i-1}$ {\em minb} if it is $\prec$-minimal in its block and {\em nminb} if it 
is not $\prec$-minimal in its block.  Similarly, let us call a small letter {\em mins} or {\em nmins} depending
on whether it is minimal in its block.  The contributions to $\sum_{\mu' \in \varphi^{-1}(\mu)} q^{\coinv(\mu')}$
coming from big letters are as follows.
\begin{itemize}
\item  The $j$ big letters $N^{i-1}$ give a complementary color contribution of $j \cdot (r-i)$ to $\coinv$.
\item  Each of the $minb$ letters forms a coinversion pair with every $nmins$ letter.  Since there are $k-m$
$minb$ letters and $n-j-m$ $nmins$ letters, this contributes $r(k-m)(n-j-m)$ to $\coinv$.
\item  Each of the $minb$ letters forms a coinversion pair with every $nminb$ letter (for a total of 
$(k-m)(j-k+m)$ coinversion pairs) as well each $minb$ letter to its left (for a total of ${k-m \choose 2}$ coinversion pairs.
This contributes $r \cdot [ (k-m)(j-k+m) + {k-m \choose 2} ]$ to $\coinv$.
\item  Each $minb$ letter forms a coinversion pair with each $mins$ letter to its left.  If we sum over the ${k \choose k-m}$
ways of interleaving the singleton blocks $\{N^{i-1}\}$ within the blocks of $\mu$, this gives rise to a factor of
${k \brack k-m}_{q^r}$.
\item  Each $nminb$ letter forms a coinversion pair with each $mins$ letter to its left.  If we consider the ${m \choose j-k+m}$
ways to augment the $m$ blocks of $\mu$ with a $nminb$ letter, this gives rise to a factor of 
$q^{{j-k+m \choose 2}} {m \brack j-k+m}_{q^r}$.
\end{itemize}

Applying the identity
\begin{equation}
r \cdot \left[(k-m)(j-k-m) + {k-m \choose 2} + {j-k+m \choose 2} \right] = r \cdot {j \choose 2},
\end{equation}
we see that
\begin{align}
\sum_{\mu' \in \varphi^{-1}(\mu)} q^{\coinv(\mu')} &= 
q^{j \cdot (r-i) + r \cdot {j \choose 2} + r \cdot (k-m)(n-j-m)} {k \brack k-m}_{q^r} {m \brack j-k+m}_{q^r} q^{\coinv(\mu)} \\
&= q^{j \cdot (r-i) + r \cdot {j \choose 2} + r \cdot (k-m)(n-j-m)} {k \brack j}_{q^r} {j \brack k-m}_{q^r} q^{\coinv(\mu)}.
\end{align}
If we sum this expression over all $\mu \in \OP_{\bbeta,m}$, and then sum over $m$, we get
\begin{equation}
\label{big-expression-h}
q^{j \cdot (r-i) + r \cdot {j \choose 2}}  {k \brack j}_{q^r} \sum_{m = \max(1,k-j)}^{\min(k,n-j)} 
q^{r \cdot (k-m)(n-j-m)} {j \brack k-m}_{q^r} \sum_{\mu \in \OP_{\bbeta,m}} q^{\coinv(\mu)}.
\end{equation}
However, thanks to Equation~\ref{coefficient-extraction} and the definition of the $\bm{I}$-functions,
the expression (\ref{big-expression-h}) is also equal to 
\begin{equation}
\left\langle q^{j \cdot (r-i) + r \cdot {j \choose 2}} {k \brack j}_{q^r}
\sum_{m = \max(1,k-j)}^{\min(k,n-j)} q^{r \cdot (k-m) \cdot (n-j-m)} {j \brack k-m}_{q^r} \bm{I_{n-j,m}}(\xx;q), 
\bm{h_{\bbeta^*}}(\xx) \right\rangle.
\end{equation}
Since both sides of the equation in the statement of the lemma have the same pairing under $\langle \cdot, \cdot \rangle$
with $\bm{h_{\bbeta^*}}(\xx)$ for any $r$-composition $\bbeta$, we are done.
\end{proof}

\section{Hilbert series and standard monomial basis}
\label{Hilbert}

\subsection{The point sets $Y_{n,k}^r$ and $Z_{n,k}^r$}
In this section we  derive the Hilbert series of $R_{n,k}$ and $S_{n,k}$. 
We  also prove that, as ungraded $G_n$-modules, we have
$R_{n,k} \cong  \CC[\FFF_{n,k}]$ and $S_{n,k} \cong \CC[\OP_{n,k}]$.
 To do this, we will use a general method dating back to Garsia and Procesi \cite{GP}
in the context of the Tanisaki ideal.  
We recall the method, and then apply it to our situation.

For any finite point set $Y \subset \CC^n$, let $\II(Y) \subseteq \CC[\xx_n]$ be the ideal of polynomials which vanish on 
$Y$.  That is, we have
\begin{equation}
\II(Y) := \{ f \in \CC[\xx_n] \,:\, f(\yy) = 0 \text{ for all $\yy \in Y$} \}.
\end{equation}
We can identify the quotient $\CC[\xx_n]/\II(Y)$ with the $\CC$-vector space of functions
$Y \rightarrow \CC$.  In particular
\begin{equation}
\dim (\CC[\xx_n]/\II(Y)) = |Y|.
\end{equation}
If $W \subseteq GL_n(\CC)$ is a finite subgroup and $Y$ is stable under the action of $W$, we  have
\begin{equation}
\CC[\xx_n]/\II(Y) \cong_W \CC[Y]
\end{equation}
as $W$-modules, where we used the fact that the permutation module $Y$ is self-dual.

The ideal $\II(Y)$ is almost never homogeneous.  To get a homogeneous ideal, we proceed as follows.  
If $f \in \CC[\xx_n]$ is any nonzero polynomial of degree $d$, write $f = f_d + f_{d-1} + \cdots + f_0$, where $f_i$ is 
homogeneous of degree $i$.  Define $\tau(f) := f_d$ and define a homogeneous ideal $\TT(Y) \subseteq \CC[\xx_n]$ by
\begin{equation}
\TT(Y) := \langle \tau(f) \,:\, f \in \II(Y) - \{0\} \rangle.
\end{equation}
The passage from $\II(Y)$ to $\TT(Y)$ does not affect the $W$-module structure (or vector space dimension)
of the quotient:
\begin{equation}
\CC[\xx_n]/\TT(Y) \cong_W\CC[\xx_n]/\II(Y) \cong_W \CC[Y].
\end{equation}

Our strategy, whose $r = 1$ avatar was accomplished in \cite{HRS}, is as follows.

\begin{enumerate}
\item  Find finite point sets $Y_{n,k}, Z_{n,k}  \subset \CC^n$  
which are stable under the action of $G_n$
such that there are equivariant bijections $Y_{n,k} \cong \FFF_{n,k}$ and $Z_{n,k} \cong \OP_{n,k}$.
\item  Prove that $I_{n,k} \subseteq \TT(Y_{n,k})$ and
$J_{n,k} \subseteq \TT(Z_{n,k})$ by showing that the generators of the ideals $I_{n,k}, J_{n,k}$ arise as top
degree components of polynomials vanishing on $Y_{n,k}, Z_{n,k}$ (respectively).
\item  Use Gr\"obner theory to prove 
\begin{equation*}
\dim(R_{n,k}) = \dim \left( \CC[\xx_n]/I_{n,k} \right) \leq | \FFF_{n,k} | = 
\dim \left( \CC[\xx_n]/\TT(Y_{n,k}) \right)
\end{equation*} 
and
\begin{equation*}
\dim(S_{n,k}) = \dim \left( \CC[\xx_n]/J_{n,k} \right) \leq | \OP_{n,k} | = 
\dim \left( \CC[\xx_n]/\TT(Z_{n,k}) \right).
\end{equation*}
Step 2 then implies $I_{n,k} = \TT(Y_{n,k})$ and $J_{n,k} = \TT(Z_{n,k})$.
\end{enumerate}

To accomplish Step 1 of this program, we introduce the following point sets.  

\begin{defn}
Fix $k$ distinct positive real numbers $0 < \alpha_1 < \cdots < \alpha_k$.  
Let $Y_{n,k} \subset \CC^n$ be the 
set of points $(y_1, \dots, y_n)$ such that
\begin{itemize}
\item  we have $y_i = 0$ or $y_i \in \{ \zeta^c  \alpha_j \,:\, 0 \leq c \leq r-1, 1 \leq j \leq k \}$ for all $i$, and  
\item  we have $\{\alpha_1, \dots, \alpha_k\} \subseteq \{ |y_1|, \dots, |y_n| \}$.
\end{itemize}
Let $Z_{n,k} \subseteq \CC^n$ be the set of points in $Y_{n,k}$ whose coordinates do not vanish:
\begin{equation*}
Z_{n,k} := \{ (y_1, \dots, y_n) \in Y_{n,k} \,:\, y_i \neq 0 \text{ for all $i$.} \}.
\end{equation*}
\end{defn}

There is a bijection $\varphi: \FFF_{n,k} \rightarrow Y_{n,k}$ given as follows.  Let 
$\sigma = (Z \mid B_1 \mid \cdots \mid B_k) \in \FFF_{n,k}$ be an $G_n$-face of dimension $k$, whose 
zero block $Z$ may be empty.  The point $\varphi(\sigma) = (y_1, \dots, y_n)$ has coordinates given by
\begin{equation}
y_i = 
\begin{cases}
0 & \text{if $i \in Z$,} \\
\zeta^c  \alpha_j & \text{if $i\in B_j$ and $i$ has color $c$.}
\end{cases}
\end{equation}
For example if $r = 3$ then
\begin{equation*}
\varphi:
( 25 \mid 3^0  \mid  1^0 4^2 6^2) \mapsto 
(\zeta^0  \alpha_2, 0, \zeta^0  \alpha_1, \zeta^2  \alpha_2, 0, \zeta^2  \alpha_2).
\end{equation*}
The set $Y_{n,k}$ is closed under the action of $G_n$ and the map $\varphi$ commutes
with the action of $G_n$.  It follows that $Y_{n,k} \cong \FFF_{n,k}$ as 
$G_n$-sets.  Moreover, the bijection $\varphi$ restricts to show that 
$Z_{n,k} \cong \OP_{n,k}$ as $G_n$-sets.
This accomplishes Step 1 of our program.

Step 2 of our program is accomplished by  appropriate modifications of  \cite[Sec. 4]{HRS}.

\begin{lemma}
\label{i-contained-in-t}
We have $I_{n,k} \subseteq \TT(Y_{n,k})$ and $J_{n,k} \subseteq \TT(Z_{n,k})$.
\end{lemma}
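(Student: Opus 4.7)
The plan is to exhibit, for each generator of $I_{n,k}$ (resp.\ $J_{n,k}$), an explicit polynomial in $\II(Y_{n,k})$ (resp.\ $\II(Z_{n,k})$) whose top-degree homogeneous component is that generator. The generators come in two flavors (variable powers and high-degree symmetric functions in $\xx_n^r$), so I would handle them separately.

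First I would handle the variable powers. The key observation is that for any point $(y_1, \dots, y_n) \in Y_{n,k}$ each coordinate $y_i$ is either $0$ or of the form $\zeta^c \alpha_j$, so that in either case $y_i \cdot \prod_{j=1}^k (y_i^r - \alpha_j^r) = 0$ (using $\zeta^r = 1$ to see that $y_i^r \in \{0, \alpha_1^r, \dots, \alpha_k^r\}$). Thus the univariate polynomial $f_i(\xx_n) := x_i \prod_{j=1}^k (x_i^r - \alpha_j^r)$ lies in $\II(Y_{n,k})$; its top-degree piece is $\tau(f_i) = x_i^{kr+1}$, giving $x_i^{kr+1} \in \TT(Y_{n,k})$. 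For $Z_{n,k}$, where no coordinate vanishes, I would drop the leading $x_i$ factor: the polynomial $g_i(\xx_n) := \prod_{j=1}^k (x_i^r - \alpha_j^r)$ lies in $\II(Z_{n,k})$ and satisfies $\tau(g_i) = x_i^{kr}$.

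Next I would handle the symmetric-function generators $e_{n-k+1}(\xx_n^r), \dots, e_n(\xx_n^r)$ simultaneously by invoking Lemma~\ref{alternating-sum-lemma}. For any $(y_1, \dots, y_n) \in Y_{n,k}$, the preceding analysis shows that $\{\alpha_1^r, \dots, \alpha_k^r\} \subseteq \{y_1^r, \dots, y_n^r\}$ (the $\alpha_j^r$ are distinct since the $\alpha_j$ are distinct positive reals). Applying the lemma with $\beta_i = y_i$ and the distinguished subset $\{\alpha_1^r, \dots, \alpha_k^r\}$, I conclude that for every $s$ in the range $n-k+1 \le s \le n$ the inhomogeneous polynomial
\begin{equation*}
p_s(\xx_n) := \sum_{j=0}^{s} (-1)^j \, e_{s-j}(\xx_n^r) \, h_j(\alpha_1^r, \dots, \alpha_k^r)
\end{equation*}
vanishes identically on $Y_{n,k}$. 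Since the factor $h_j(\alpha_1^r, \dots, \alpha_k^r)$ is a scalar and $\deg e_{s-j}(\xx_n^r) = r(s-j)$, the unique maximum-degree term is the $j=0$ summand, so $\tau(p_s) = e_s(\xx_n^r)$. Hence $e_s(\xx_n^r) \in \TT(Y_{n,k})$. The same argument verbatim works for $Z_{n,k}$, since the hypothesis $\{\alpha_1^r, \dots, \alpha_k^r\} \subseteq \{y_1^r, \dots, y_n^r\}$ only uses points where $\{\alpha_1, \dots, \alpha_k\} \subseteq \{|y_i|\}$, which holds on $Z_{n,k} \subseteq Y_{n,k}$.

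Because $\TT(Y_{n,k})$ and $\TT(Z_{n,k})$ are ideals, containment of these generators implies containment of $I_{n,k}$ and $J_{n,k}$ respectively. There is no real obstacle here: the only mildly delicate point is translating the hypothesis $\{\alpha_1, \dots, \alpha_k\} \subseteq \{|y_i|\}$ from the defining condition of $Y_{n,k}$ into the subset condition $\{\alpha_1^r, \dots, \alpha_k^r\} \subseteq \{y_i^r\}$ required to invoke Lemma~\ref{alternating-sum-lemma}, which is where the $r$th-power twist in the ideal generators becomes essential.
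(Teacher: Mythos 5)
Your proof is correct and takes essentially the same approach as the paper: it verifies generator-by-generator that each generator of $I_{n,k}$ (resp.\ $J_{n,k}$) arises as the top-degree component of an explicit polynomial vanishing on $Y_{n,k}$ (resp.\ $Z_{n,k}$), using $x_i\prod_j(x_i^r-\alpha_j^r)$ (resp.\ $\prod_j(x_i^r-\alpha_j^r)$) for the variable powers and Lemma~\ref{alternating-sum-lemma} for the high-degree elementary symmetric functions. Your write-up spells out the application of Lemma~\ref{alternating-sum-lemma} (with the lemma's $\alpha$'s taken to be $\alpha_1^r,\dots,\alpha_k^r$ and $\beta_i=y_i$) that the paper leaves implicit, but the argument is identical in substance.
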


\begin{proof}
We will show that every generator of $I_{n,k}$ (resp. $J_{n,k}$) 
is the top degree component of some polynomial in $\II(Y_{n,k})$ (resp. $\II(Z_{n,k})$).

Let $1 \leq  i \leq n$.
It is clear that $x_i (x_i^r - \alpha_1^r) \cdots (x_i^r - \alpha_k^r) \in \II(Y_{n,k})$.  Taking the highest 
component, we have $x_i^{kr+1} \in \TT(Y_{n,k})$.  Similarly, the polynomial
$(x_i^r - \alpha_1^r) \cdots (x_i^r - \alpha_k^r)$ vanishes on $Z_{n,k}$, so that 
$x_i^{kr} \in \TT(Z_{n,k})$.
Lemma~\ref{alternating-sum-lemma} applies to show 
$e_{n-k+1}(\xx_n^r), \dots, e_n(\xx_n^r) \in \TT(Y_{n,k})$ and 
$e_{n-k+1}(\xx_n^r), \dots, e_n(\xx_n^r) \in \TT(Z_{n,k})$.
\end{proof}

\subsection{Skip monomials and initial terms}
Step 3 of our program takes more work.  We begin by isolating certain monomials in the
initial ideals of $I_{n,k}$ and $J_{n,k}$.

\begin{lemma}
\label{skip-leading-terms}
Let $<$ be the lexicographic order on monomials in $\CC[\xx_n]$.  
\begin{itemize}
\item
For any $1 \leq i \leq n$ we have
$x_i^{kr+1} \in \initial_<(I_{n,k})$ and $x_i^{kr} \in \initial_<(J_{n,k})$.
\item
If $S \subseteq [n]$ satisfies $|S| = n-k+1$, we also have
$\xx(S)^r \in \initial_<(I_{n,k})$ and $\xx(S)^r \in \initial_<(J_{n,k})$.
\end{itemize}
\end{lemma}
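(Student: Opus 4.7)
The plan is to dispose of both bullets by direct appeal to the previously established Lemmas~\ref{demazure-initial-term} and~\ref{demazures-in-ideal}, rather than by any fresh computation.

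For the first bullet I would simply note that $x_i^{kr+1}$ appears literally on the list of generators of $I_{n,k}$, and under any monomial order the leading term of a monomial is the monomial itself; hence $x_i^{kr+1} \in \initial_<(I_{n,k})$. The identical argument with $x_i^{kr}$ and $J_{n,k}$ handles that half of the statement.

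For the second bullet, the idea is to exhibit, for each $(n{-}k{+}1)$-subset $S \subseteq [n]$, a single element of $I_{n,k}$ (respectively $J_{n,k}$) whose lex leading term is exactly $\xx(S)^r$. The natural choice is the reversed twisted Demazure character $\overline{\kappa_{\overline{\gamma(S)}}(\xx_n^r)}$. Lemma~\ref{demazures-in-ideal} asserts that this polynomial already lies in the subideal $\langle e_n(\xx_n^r), e_{n-1}(\xx_n^r), \dots, e_{n-k+1}(\xx_n^r) \rangle$, and in particular in both $I_{n,k}$ and $J_{n,k}$. Lemma~\ref{demazure-initial-term} then identifies its lex leading term as precisely $\xx(S)^r$. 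Combining these two facts gives the desired memberships in $\initial_<(I_{n,k})$ and $\initial_<(J_{n,k})$.

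In terms of difficulty, there is essentially no obstacle to overcome: all of the real work was already carried out in Section~\ref{Polynomial} (and ultimately in the analogous arguments of \cite{HRS} after the substitution $(x_1, \dots, x_n) \mapsto (x_1^r, \dots, x_n^r)$). The lemma should be viewed as a bookkeeping step that packages these earlier results into the form needed for the upcoming Gr\"obner-theoretic dimension bound on $R_{n,k}$ and $S_{n,k}$ in Step~3 of the Garsia--Procesi strategy.
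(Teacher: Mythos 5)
Your proof matches the paper's essentially verbatim: the first bullet follows because the relevant variable powers are generators, and the second bullet combines Lemma~\ref{demazures-in-ideal} (membership of $\overline{\kappa_{\overline{\gamma(S)}}(\xx_n^r)}$ in both ideals) with Lemma~\ref{demazure-initial-term} (its lex leading term is $\xx(S)^r$). Nothing to add.
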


\begin{proof}
The first claim follows from the fact that $x_i^{kr+1}$ is a generator of $I_{n,k}$ and 
$x_i^{kr}$ is a generator of $J_{n,k}$.  The second claim is a consequence of Lemma~\ref{demazure-initial-term}
and Lemma~\ref{demazures-in-ideal}.
\end{proof}

It will turn out that the monomials given in Lemma~\ref{skip-leading-terms} will suffice to generate
$\initial_<(I_{n,k})$ and $\initial_<(J_{n,k})$.  The next definition gives the family of monomials which
are not divisible by any of the monomials in Lemma~\ref{skip-leading-terms} a name.

\begin{defn}
A monomial $m \in \CC[\xx_n]$ is {\em $(n,k)$-nonskip} if 
\begin{itemize}
\item   $x_i^{kr+1} \nmid m$ for $1 \leq i \leq n$, and
\item  $\xx(S)^r \nmid m$ for all $S \subseteq [n]$ with $|S| = n-k+1$.
\end{itemize}
Let $\MMM_{n,k}$ denote the collection of all $(n,k,r)$-nonskip monomials in $\CC[\xx_n]$.

An $(n,k)$-nonskip monomial $m \in \MMM_{n,k}$ is called {\em strongly $(n,k)$-nonskip} if we have
$x_i^{kr} \nmid m$ for all $1 \leq i \leq n$.  Let $\NNN_{n,k}$ denote the collection of strongly 
$(n,k)$-nonskip monomials.  
\end{defn}

We will describe a bijection $\Psi: \FFF_{n,k} \rightarrow \MMM_{n,k}$ which restricts to a bijection
$\OP_{n,k} \rightarrow \NNN_{n,k}$.
The bijection $\Psi$ will be constructed recursively, so that $\Psi(\sigma)$ will be determined by $\Psi(\overline{\sigma})$,
where $\overline{\sigma}$ is the $G_{n-1}$-face obtained from $\sigma$ by deleting the largest letter $n$.
The recursive procedure which gives the derivation $\Psi(\overline{\sigma}) \mapsto \Psi(\sigma)$ will rely
on the following lemmata involving skip monomials.  The first of these is an extension of 
\cite[Lem. 4.5]{HRS}.

\begin{lemma}
\label{skip-monomial-union}
Let $m \in \CC[\xx_n]$ be a monomial and let $S, T \subseteq [n]$ be subsets.  If $\xx(S)^r \mid m$
and $\xx(T)^r \mid m$, then $\xx(S \cup T)^r \mid m$.  
\end{lemma}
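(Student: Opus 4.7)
The plan is to reduce the statement to a direct comparison of exponents, since the factor $r$ appears uniformly on both sides. Write $m = x_1^{a_1} \cdots x_n^{a_n}$. If $S = \{s_1 < \cdots < s_p\}$, then $\xx(S)^r \mid m$ says precisely that $a_{s_j} \geq r(s_j - j + 1)$ for each $1 \leq j \leq p$, and analogously for $T$. Setting $U := S \cup T = \{u_1 < \cdots < u_\ell\}$, the goal is to verify $a_{u_i} \geq r(u_i - i + 1)$ for every $1 \leq i \leq \ell$.

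First I would fix an index $i$ and observe that $u_i$ lies in $S$ or in $T$ (possibly both). Say $u_i = s_j \in S$. The key combinatorial input is that $j \leq i$: the elements $u_1, \ldots, u_{i-1}$ of $U$ strictly below $u_i$ contain all elements of $S$ strictly below $u_i = s_j$, of which there are exactly $j-1$, so $j-1 \leq i-1$. Then
\begin{equation*}
a_{u_i} = a_{s_j} \geq r(s_j - j + 1) = r(u_i - j + 1) \geq r(u_i - i + 1),
\end{equation*}
which is what is required. The case $u_i \in T$ is identical with $T$ playing the role of $S$.

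There is no real obstacle here: the statement is the $r$-th power version of \cite[Lem.~4.5]{HRS}, and since the exponent $r$ is the same on every variable, it factors out of every inequality. The whole argument is the observation that enlarging a set can only move an element to a later position in the ordering, which weakens (not strengthens) the divisibility requirement on the corresponding exponent.
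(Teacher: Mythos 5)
Your proof is correct and follows essentially the same line as the paper's: both amount to the observation that for $i \in S$ the exponent of $x_i$ in $\xx(S \cup T)^r$ is at most its exponent in $\xx(S)^r$, because passing to the larger set $S \cup T$ can only increase the position index of $i$. You have simply unpacked this observation into an explicit inequality $j \leq i$ on positions, which is a fine (and slightly more detailed) way to say the same thing.
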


\begin{proof}
Given $i \in S$, it follows from the definition of skip monomials that the exponent of $x_i$ in $\xx(S \cup T)^r$
is $\leq$ the exponent of $x_i$ in $\xx(S)^r$.  A similar observation holds for $i \in T$.  The claimed divisibility follows.
\end{proof}

The following result is an immediate consequence of Lemma~\ref{skip-monomial-union}; it extends
\cite[Lem. 4.6]{HRS}.

\begin{lemma}
\label{skip-monomial-unique}
Let $m \in \CC[\xx_n]$ be a monomial and let $\ell$ be the largest integer such that there exists a subset
$S \subseteq [n]$ with $|S| = \ell$ and $\xx(S)^r \mid m$.  Then there exists {\em a unique} subset $S \subseteq [n]$
with $|S| = \ell$ and $\xx(S)^r \mid m$.
\end{lemma}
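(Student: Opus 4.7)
The plan is to argue by contradiction, directly exploiting Lemma~\ref{skip-monomial-union} (the previous lemma on the union of skip monomials). Suppose for contradiction that there exist two distinct subsets $S, T \subseteq [n]$ of size $\ell$ with $\xx(S)^r \mid m$ and $\xx(T)^r \mid m$. Since $S \neq T$ and $|S| = |T| = \ell$, neither is contained in the other, so $|S \cup T| > \ell$.

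By Lemma~\ref{skip-monomial-union}, the divisibility conditions $\xx(S)^r \mid m$ and $\xx(T)^r \mid m$ together imply $\xx(S \cup T)^r \mid m$. But then $S \cup T$ is a subset of $[n]$ of cardinality strictly greater than $\ell$ whose $r^{th}$-power skip monomial divides $m$, contradicting the maximality of $\ell$. Hence no two such subsets of size $\ell$ can exist, establishing uniqueness. There is no real obstacle here: the content of the lemma is entirely packaged into Lemma~\ref{skip-monomial-union}, and this statement is simply the maximality consequence of closure under union.
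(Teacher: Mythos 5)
Your proof is correct and is essentially the same as the paper's: both invoke Lemma~\ref{skip-monomial-union} on the two putative sets and derive a contradiction with the maximality of $\ell$. You've merely spelled out the observation that distinct sets of equal size have a strictly larger union, which the paper leaves implicit.
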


\begin{proof}
If there were two such sets $S, S'$ then by Lemma~\ref{skip-monomial-union} we would have
$\xx(S \cup S')^r \mid m$, contradicting the definition of $\ell$.
\end{proof}

Given any subset $S \subseteq [n]$, let $\mm(S) := \prod_{i \in S} x_i$ be the 
corresponding squarefree monomial.  
For example, we have $\mm(245) = x_2 x_4 x_5$.
We have the following lemma involving the $r^{th}$ power 
$\mm(S)^r$ of $\mm(S)$.  This is the extension of \cite[Lem. 4.7]{HRS}.

\begin{lemma}
\label{skip-monomial-multiply}
Let $m \in \MMM_{n,k}$ be an $(n,k)$-nonskip monomial.  There exists a unique set $S \subseteq [n]$ with
$|S| = n-k$ such that
\begin{enumerate}
\item  $\xx(S)^r \mid ( \mm(S)^r \cdot m)$, and
\item  $\xx(U)^r \nmid ( \mm(S)^r \cdot m)$ for all $U \subseteq [n]$ with $|U| = n-k+1$.
\end{enumerate} 
\end{lemma}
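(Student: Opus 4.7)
The plan is to translate both conditions into exponent inequalities and then construct $S$ explicitly by a top-down greedy procedure on $S^c$. First I would rewrite (1) and (2) in terms of the exponent vector $(a_1, \ldots, a_n)$ of $m$ and the notation $S^c = \{t_1 < \cdots < t_k\}$. A direct calculation from the definition of $\xx(S)$ shows that (1) is equivalent to $a_i \geq r \cdot |S^c \cap [1, i-1]|$ for all $i \in S$. For (2), assuming (1) holds, I would reduce from general $U$ of size $n-k+1$ to sets of the form $U = S \cup \{p\}$: by Lemma~\ref{skip-monomial-union} applied to $S$ and $U$ one has $\xx(S \cup U)^r \mid \mm(S)^r m$, and taking $p$ to be the smallest element of $U \setminus S$, a direct exponent comparison (using (1) to handle the indices of $S$ above $p$) shows $\xx(S \cup \{p\})^r \mid \mm(S)^r m$. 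This last divisibility is in turn equivalent to $a_p \geq r(|S^c \cap [1, p-1]| + 1)$, so (2) becomes the family of inequalities $a_{t_j} < rj$ for $j = 1, \ldots, k$.

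Combined, (1) and (2) assert that $t_1 < \cdots < t_k$ partition $[n]$ into intervals on which $a_{t_j} < rj$ at each $t_j$ and $a_i \geq r(j-1)$ on every open interval $(t_{j-1}, t_j)$, with $t_0 := 0$ and $t_{k+1} := n+1$. Reading this characterization from the top forces $t_k$ to be the largest index with $a_i < rk$ (any $i > t_k$ would lie in $(t_k, n+1)$ and would then need $a_i \geq rk$, contradicting $a_i < rk$), then $t_{k-1}$ the largest index below $t_k$ with $a_i < r(k-1)$, and so on. This gives a deterministic greedy algorithm, and uniqueness of $S$ follows at once from the forced description.

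For existence I would verify that the greedy algorithm cannot fail, using that $m$ is $(n,k)$-nonskip. If it fails at some step $j$ after placing $t_{j+1}, \ldots, t_k$, I would set $U := [n] \setminus (\{t_{j+1}, \ldots, t_k\} \cup W)$ where $W$ consists of the $j-1$ smallest elements of $[n] \setminus \{t_{j+1}, \ldots, t_k\}$; then $|U| = n-k+1$. A case analysis on where each $u_\ell \in U$ lies (below $t_{j+1}$, in some interval $(t_\ell, t_{\ell+1})$, or above $t_k$) confirms that $a_{u_\ell}$ meets the required threshold $r(u_\ell - \ell + 1)$, using the bounds the algorithm guarantees at steps $\ell > j$ together with the failure hypothesis at step $j$. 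This produces $\xx(U)^r \mid m$ with $|U| = n-k+1$, contradicting the $(n,k)$-nonskip condition. The main obstacle is the exponent bookkeeping in the subcase $t_{j+1} < j$, where $W$ must extend above $t_{j+1}$: the contribution of $W$ to each $|U^c \cap [1, u_\ell - 1]|$ then has to be tallied interval by interval before the thresholds $r(u_\ell - \ell + 1)$ can be verified uniformly.
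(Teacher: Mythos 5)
Your proposal is correct, and it gives a genuinely different argument from the paper's. The paper proves uniqueness by contradiction: given two candidate sets $S \neq T$ satisfying (1)--(2), it takes the first point of disagreement and merges to form a size-$(n-k+1)$ set $U$ with $\xx(U)^r \mid \mm(S)^r m$, violating (2). It proves existence by observing that $\{1,\dots,n-k\}$ lies in the collection $\CCC$ of size-$(n-k)$ sets satisfying (1), taking the lexicographically final $S_0 \in \CCC$, and then arguing that any violation of (2) for $S_0$ would produce either a size-$(n-k+1)$ skip monomial dividing $m$ (contradicting $m \in \MMM_{n,k}$) or a set beyond $S_0$ in $\CCC$. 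You instead translate (1)--(2) into explicit interval conditions on the exponent sequence, read off $S^c = \{t_1 < \cdots < t_k\}$ greedily from the top (which makes uniqueness automatic), and prove existence by showing the greedy cannot stall. Your reduction of (2) to sets of the form $U = S \cup \{p\}$ via Lemma~\ref{skip-monomial-union} is a clean reformulation the paper does not make explicit, and the intrinsic, algorithmic description of $S$ you obtain is arguably more illuminating than the paper's lexicographic maximization, at the cost of a somewhat longer setup. One remark: the subcase $t_{j+1} < j$ that you flag as the main difficulty is in fact harmless and requires no special bookkeeping. If the greedy stalls at step $j$, then for any $u \in U$ lying in the interval $(t_\ell, t_{\ell+1})$ with $j \leq \ell \leq k$ (taking $t_j := 0$ for the failure interval and $t_{k+1} := n+1$) one has $a_u \geq r\ell$ (from the failure hypothesis when $\ell = j$, from the greedy bounds when $\ell > j$), while
\begin{equation*}
|U^c \cap [1,u-1]| = (\ell - j) + |W \cap [1,u-1]| \leq (\ell - j) + (j-1) = \ell - 1
\end{equation*}
simply because $|W| = j - 1$; hence the required threshold $r(|U^c \cap [1,u-1]|+1) \leq r\ell \leq a_u$ holds uniformly, no matter how $W$ interleaves with $\{t_{j+1},\dots,t_k\}$.
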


\begin{proof}
We begin with uniqueness.  Suppose $S = \{s_1 < \cdots < s_{n-k} \}$ and $T = \{t_1 < \cdots < t_{n-k} \}$ were
two such sets.  Let $\ell$ be such that $s_1 = t_1, \dots, s_{\ell-1} = t_{\ell-1}$, and $s_{\ell} \neq t_{\ell}$;
without loss of generality we have $s_{\ell} < t_{\ell}$.
Define a new set $U$ by $U := \{s_1 < \cdots < s_{\ell} < t_{\ell} < t_{\ell + 1} < \cdots < t_{n-k} \}$, so that 
$|U| = n-k+1$.
Since $\xx(S)^r \mid (\mm(S)^r \cdot m)$ and $\xx(T)^r \mid (\mm(T)^r \cdot m)$, we have
$\xx(U)^r \mid (\mm(S)^r \cdot m)$, which is a contradiction.

To prove existence, consider the following collection $\CCC$ of subsets of $[n]$:
\begin{equation}
\CCC := \{ S \subseteq [n] \,:\, |S| = n-k \text{ and } \xx(S)^r \mid (\mm(S)^r \cdot m) \}.
\end{equation}
The collection $\CCC$ is nonempty; indeed, we have $\{1, 2, \dots, n-k\} \in \CCC$.  Let $S_0 \in \CCC$ 
be the lexicographically {\em final} set in $\CCC$;  we argue that $\mm(S_0)^r \cdot m$ satisfies 
Condition 2 of the statement of the lemma, thus finishing the proof.

Let $U \subseteq [n]$ have size $|U| = n-k+1$ and suppose $\xx(U)^r \mid (\mm(S_0)^r \cdot m)$.  
If there
were an element $u \in U$ with $u < \min(S_0)$, then we would have $\xx(S_0 \cup \{u\})^r \mid m$,
which contradicts the assumption $m \in \MMM_{n,k}$.  Since $|U| > |S_0|$, there exists an element 
$u_0 \in U - S_0$ with $u_0 > \min(S_0)$.  Write the union $S_0 \cup \{u_0\}$ as
\begin{equation}
S_0 \cup \{u_0\} = \{s_1 < \cdots < s_j < u_0 < s_{j+1} < \cdots < s_{n-k} \},
\end{equation}
where $j \geq 1$.  Define a new set $S_0'$ by
\begin{equation}
S_0' := \{s_1 < \cdots < s_{j-1} < u_0 < s_{j+1} < \cdots < s_{n-k} \}.
\end{equation}
Then $S_0'$ comes after $S_0$ in lexicographic order but we have $S_0' \in \CCC$, contradicting our choice
of $S_0$.
\end{proof}

To see how Lemma~\ref{skip-monomial-multiply} works, consider the case
$(n,k,r) = (5,2,3)$ and $m = x_1^2 x_2^6 x_3^3 x_4^3 x_5^6 \in \MMM_{5,2}$.
The collection $\CCC$ of sets
\begin{equation*}
\CCC = \{ S \subseteq [5] \,:\, |S| = 3 \text{ and } \xx(S)^3 \mid (\mm(S)^3 \cdot m) \}
\end{equation*}
is given by
\begin{equation*}
\CCC = \{123,  124, 125, 234, 235 \}.
\end{equation*}
However, we have
\begin{align*}
&\xx(1234)^3 \mid (\mm(123)^3 \cdot m),  
&\xx(1234)^3 \mid (\mm(124)^3 \cdot m), \\
&\xx(1235)^3 \mid (\mm(125)^3 \cdot m), 
&\xx(2345)^3 \mid (\mm(234)^3 \cdot m).
\end{align*}
On the other hand, if $S \subseteq [5]$ and $|S| = 4$, then $\xx(S)^3 \nmid (\mm(235)^3 \cdot m)$.
Observe that $235$ is the lexicographically final set in $\CCC$.

\subsection{The bijection $\Psi$}
We describe a bijection $\Psi: \FFF_{n,k} \rightarrow \MMM_{n,k}$ which restricts
to a bijection $\OP_{n,k} \rightarrow \NNN_{n,k}$ with the property that 
$\coinv(\sigma) = \deg(\Psi(\sigma))$ for any $G_n$-face $\sigma \in \FFF_{n,k}$.  
The construction of $\Psi$ will be recursive in the parameter $n$.

If $n = 1$ and $k = 1$,
the relation $\coinv(\sigma) = \deg(\Psi(\sigma))$ determines the bijection $\Psi$ uniquely.  Explicitly,
the map $\Psi: \FFF_{1,1} \rightarrow \MMM_{1,1}$ is defined by 
\begin{equation}
\Psi:  (1^c) \mapsto x_1^{r-c-1}, 
\end{equation}
for any color $0 \leq c \leq r-1$.

If $n = 1$ and $k = 0$ then $\FFF_{1,0}$ consists of the sole face $(1)$.  
On the other hand, the collection $\MMM_{1,0}$ of nonskip monomials 
consists of the sole monomial $1$.
We are  forced to define 
\begin{equation}
\Psi: (1) \mapsto 1.
\end{equation}

The combinatorial recursion on which $\Psi$ is based is as follows.
Let $\sigma = (B_1 \mid \cdots \mid B_{\ell}) \in \FFF_{n,k}$ 
be an $G_n$-face of dimension $k$, so that $\ell = k+1$ or $\ell = k$ according to whether $\sigma$
has a zero block.
Suppose we wish to build a larger face
by inserting $n+1$ into $\sigma$.  There are three ways in which this can be done.
\begin{enumerate}
\item  We could perform a {\em star insertion} by inserting $n+1$ into one of the
nonzero blocks $B_{\ell - j}$ of $\sigma$ for $1 \leq j \leq k$
also assigning a color $c$ to $n+1$.  The resulting $G_n$-face would be
$(B_1 \mid \cdots \mid B_{\ell - j} \cup \{(n+1)^c\} \mid \cdots \mid B_{\ell})$.  This leaves
the dimension $k$ unchanged and increases $\coinv$
by $r \cdot (k - j) + (r - c - 1)$.  

For example, if $r = 2$ and  $\sigma = (3 \mid 2^1 4^0 \mid 1^1) \in \FFF_{4,2}$, the possible star insertions of 
$5$ and their effects on $\coinv$ are
\begin{center}
$\begin{array}{cccc}
(3 \mid 2^1 4^0 5^1 \mid 1^1) & (3 \mid 2^1 4^0 5^0 \mid 1^1 ) & (3 \mid 2^1 4^0  \mid 1^1 5^1) & 
(3 \mid 2^1 4^0 \mid 1^1 5^0) \\
\coinv + 0 & \coinv + 1 & \coinv + 2 & \coinv + 3.
\end{array}$
\end{center}
\item  We could perform a {\em zero insertion} by inserting $n+1$ into the zero block of $\sigma$ (or by creating
a new zero block whose sole element is $n+1$).  This leaves the dimension $k$ unchanged and increases 
$\coinv$ by $kr$.

For example, if $r = 2$ and  $\sigma = (3 \mid 2^1 4^0 \mid 1^1) \in \FFF_{4,2}$, the zero insertion of $5$ would
yield $(35 \mid 2^1 4^0 \mid 1^1)$, adding $4$ to $\coinv$.
\item  We could perform a {\em bar insertion} by inserting $n+1$  into a new singleton nonzero block of $\sigma$ just
after the block $B_{\ell - j}$ for some $0 \leq j \leq k$,
also assigning a color $c$ to $n+1$.   The resulting $G_n$-face would be 
$(B_1 \mid \cdots \mid B_{\ell - j} 
\mid (n+1)^c  \mid B_{\ell - j + 1} \mid \cdots \mid B_{\ell})$.  This increases the dimension $k$ by one
and increases $\coinv$ by $r \cdot (n-k) + r \cdot (k-j) + (r-c-1)$.

For example, if $r = 2$ and  $\sigma = (3 \mid 2^1 4^0 \mid 1^1) \in \FFF_{4,2}$, the possible bar insertions of 
$5$ and their effects on $\coinv$ are
\begin{center}
$\begin{array}{ccc}
(3 \mid 5^1 \mid 2^1 4^0 \mid 1^1) & 
(3 \mid 5^0 \mid 2^1 4^0 \mid 1^1) &
(3 \mid 2^1 4^0 \mid 5^1 \mid 1^1)   \\
\coinv + 4 & \coinv + 5 & \coinv + 6  \\ \\
(3 \mid 2^1 4^0 \mid 5^0 \mid 1^1) & 
(3 \mid 2^1 4^0 \mid 1^1 \mid 5^1) & 
(3 \mid 2^1 4^0 \mid 1^1 \mid 5^0)  \\
\coinv + 7 & \coinv + 8 & \coinv + 9.
\end{array}$
\end{center}
\end{enumerate}
The names of these three kinds of insertions come from our combinatorial models for $G_n$-faces; a star insertion adds
a star to the star model of  
$\sigma$, a zero insertion adds an element to the zero block of $\sigma$, and a bar insertion adds
a bar to the bar model of $\sigma$.

Let $\sigma = (B_1 \mid \cdots \mid B_{\ell}) \in \FFF_{n,k}$ 
be an $G_n$-face of dimension $k$ and let $\overline{\sigma}$ be the $G_{n-1}$-face
obtained by deleting $n$ from $\sigma$.  Then $\overline{\sigma} \in \FFF_{n-1,k}$ if $\sigma$ arises from 
$\overline{\sigma}$ by a star or zero insertion and $\overline{\sigma} \in \FFF_{n-1,k-1}$ if $\sigma$
arises from $\overline{\sigma}$ from a bar insertion.
Assume inductively that the monomial $\Psi(\overline{\sigma})$ has been defined, and that 
this monomial lies in $\MMM_{n-1,k}$ or $\MMM_{n-1,k-1}$ according to whether $\overline{\sigma}$ lies
in $\FFF_{n-1,k}$ or $\FFF_{n-1,k-1}$.
We define $\Psi(\sigma)$ by the
rule
\begin{equation}
\Psi(\sigma) := \begin{cases}
\Psi(\overline{\sigma}) \cdot x_n^{r \cdot (k-j-1) + (r-c-1)} & 
\text{if $n^c \in B_{\ell - j}$ and $B_{\ell - j}$ is a nonzero nonsingleton,} \\
\Psi(\overline{\sigma}) \cdot x_n^{kr} & \text{if $n$ lies in the zero block of $\sigma$,} \\
\Psi(\overline{\sigma}) \cdot \mm(S)^r \cdot x_n^{r \cdot (k-j-1) + (r-c-1)} &  
\text{if $B_{\ell - j} = \{n^c\}$ is a nonzero singleton,}
\end{cases}
\end{equation}
where in the third branch $S \subseteq [n-1]$ is the unique subset of size $|S| = n-k$ guaranteed by 
Lemma~\ref{skip-monomial-multiply} applied to $m = \Psi(\overline{\sigma}) \in \MMM_{n-1,k-1}$.

\begin{example}
Let $(n,k,r) = (8,3,3)$
and consider the face $\sigma = (2 5 \mid 1^0 7^0 8^1 \mid 6^1 \mid 3^2 4^2) \in \FFF_{8,3}$.
In order to calculate $\Psi(\sigma) \in \MMM_{8,3}$, we refer to the following table.
Here `type' refers to the type of insertion (star, zero, or bar) of $n$ at each stage.

\begin{center}
\begin{tabular}{l | l | l | l | l | l}
$\sigma$ & $n$ & $k$ & type & $S$ & $\Psi(\sigma)$ \\ \hline
$(1^0)$ & $1$ & $1$ & & & $x_1^2$ \\
$(2 \mid 1^0)$ & $2$ & $1$ & zero & & $x_1^2 x_2^3$ \\
$(2 \mid 1^0 \mid 3^2)$ & $3$ & $2$ & bar & $2$ & $x_1^2 x_2^3 \cdot \mm(2)^3 \cdot x_3^3 = x_1^2 x_2^6 x_3^3$ \\
$(2 \mid 1^0 \mid 3^2 4^2)$ & $4$ & $2$ & star & & $x_1^2 x_2^6 x_3^3 x_4^3$ \\
$(25 \mid 1^0 \mid 3^2 4^2)$ & $5$ & $2$ & zero & & $x_1^2 x_2^6 x_3^3 x_4^3 x_5^6$ \\
$(25 \mid 1^0 \mid 6^1 \mid 3^2 4^2)$ & $6$ & $3$ & bar & $235$ & 
$x_1^2 x_2^6 x_3^3 x_4^3 x_5^6 \cdot \mm(235)^3 \cdot x_6^4 = x_1^2 x_2^9 x_3^6 x_4^3 x_5^9 x_6^4$ \\
$(25 \mid 1^0 7^0 \mid 6^1 \mid 3^2 4^2)$ & $7$ & $3$ & star &  & 
$x_1^2 x_2^9 x_3^6 x_4^3 x_5^9 x_6^4 x_7^2$ \\
$(25 \mid 1^0 7^0 8^1 \mid 6^1 \mid 3^2 4^2)$ & $8$ & $3$ & star &  & 
$x_1^2 x_2^9 x_3^6 x_4^3 x_5^9 x_6^4 x_7^2 x_8^1$ \\
\end{tabular}
\end{center}

We conclude that 
\begin{equation*} \Psi(\sigma) = 
\Psi(2 5 \mid 1^0 7^0 8^1 \mid 6^1 \mid 3^2 4^2) = x_1^2 x_2^9 x_3^6 x_4^3 x_5^9 x_6^4 x_7^2 x_8^1 
\in \MMM_{8,3}.  
\end{equation*}
Observe that the zero block of $\sigma$ is $\{2,5\}$, and that $x_2$ and $x_5$ are the variables in $\Psi(\sigma)$
with exponent $k  r = 3 \cdot 3 = 9$.  
\end{example}

The next result is the extension of \cite[Thm. 4.9]{HRS} to $r \geq 2$.
The proof has the same basic structure, but one must account for the presence of zero blocks.

\begin{proposition}
\label{psi-is-bijection}
The map $\Psi: \FFF_{n,k} \rightarrow \MMM_{n,k}$ is a bijection which restricts to a bijection
$\OP_{n,k} \rightarrow \NNN_{n,k}$.  Moreover, for any $\sigma \in \FFF_{n,k}$ we have
\begin{equation}
\coinv(\sigma) = \deg(\Psi(\sigma)).
\end{equation}
Finally, if $\sigma \in \FFF_{n,k}$ has a zero block $Z$, then 
\begin{equation}
Z = \{1 \leq i \leq n \,:\, \text{the exponent of $x_i$ in $\Psi(\sigma)$ is $kr$} \}.
\end{equation}
\end{proposition}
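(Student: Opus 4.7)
The plan is to prove all four assertions simultaneously by induction on $n$, mimicking the recursive construction of $\Psi$. The base case $n=1$ is handled by direct inspection: for $k=1$ the formula $\Psi((1^c)) = x_1^{r-c-1}$ is a bijection to $\{1, x_1, \dots, x_1^{r-1}\} = \MMM_{1,1} = \NNN_{1,1}$ with $\coinv((1^c)) = r-c-1 = \deg(\Psi((1^c)))$, and for $k=0$ there is nothing to check.

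For the inductive step, fix $\sigma \in \FFF_{n,k}$, let $\overline{\sigma}$ be obtained by deleting $n$, and analyze the three branches of the recursive definition separately. In the \emph{star insertion} branch, $\overline{\sigma} \in \FFF_{n-1,k}$ and $\Psi(\sigma) = \Psi(\overline{\sigma}) \cdot x_n^{r(k-j-1)+(r-c-1)}$; the exponent of $x_n$ lies strictly below $kr$, so we need only confirm no skip monomial $\xx(S)^r$ with $|S|=n-k+1$ divides $\Psi(\sigma)$. By induction $\Psi(\overline{\sigma}) \in \MMM_{n-1,k}$, and the extra power of $x_n$ is too small to produce a new divisibility (any such $S$ would force $n \in S$ with the exponent demand $\geq r$, forcing $S \setminus \{n\}$ to witness a skip monomial already in $\Psi(\overline{\sigma})$). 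The \emph{zero insertion} branch multiplies by $x_n^{kr}$, and again the same observation about $S \ni n$ prevents any new skip divisibility. The \emph{bar insertion} branch is the subtle one: here $\overline{\sigma} \in \FFF_{n-1,k-1}$, so Lemma~\ref{skip-monomial-multiply} gives the unique set $S \subseteq [n-1]$ with $|S|=n-k$ such that $\mm(S)^r \cdot \Psi(\overline{\sigma})$ is divisible by $\xx(S)^r$ but by no $\xx(U)^r$ with $|U|=n-k+1$; tacking on $x_n^{r(k-j-1)+(r-c-1)}$ keeps the exponent of $x_n$ below $kr$ and so, repeating the earlier argument, leaves the result in $\MMM_{n,k}$.

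The coinv and degree accounting is straightforward once the recursion is in hand: in each branch the increment $r(k-j-1)+(r-c-1)$, $kr$, or $r(n-k)+r(k-j)+(r-c-1)$ matches exactly the degree added (noting $\deg(\mm(S)^r) = r(n-k)$ in the bar case), so the identity $\coinv(\sigma) = \deg(\Psi(\sigma))$ follows from its inductive version. The zero-block characterization propagates because star and bar insertions contribute a power of $x_n$ strictly less than $kr$, while zero insertion contributes exactly $x_n^{kr}$; combined with the inductive hypothesis applied to $\overline{\sigma}$ (with the caveat that in the bar case $k$ jumps, and the possible zero block of $\overline{\sigma}$ may contribute $x_i^{(k-1)r}$, which is no longer the maximal exponent $kr$), the set of variables hitting the exponent $kr$ in $\Psi(\sigma)$ is precisely the zero block of $\sigma$.

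Finally, bijectivity follows by constructing the inverse explicitly. Given $m \in \MMM_{n,k}$, inspect the exponent $a$ of $x_n$: if $a = kr$ we undo a zero insertion by dividing out $x_n^{kr}$; otherwise write $a = r(k-j-1)+(r-c-1)$ uniquely with $0 \leq j \leq k-1$ and $0 \leq c \leq r-1$, and test whether $m/x_n^a$ lies in $\MMM_{n-1,k}$ (undo a star insertion) or whether it fails to lie in $\MMM_{n-1,k}$ because of exactly one skip divisibility $\xx(T)^r$ with $|T|=n-k+1$, in which case the unique $T$ from Lemma~\ref{skip-monomial-unique} dictates dividing by $\mm(S)^r$ with $S = T \setminus \{\max T\}$ and we undo a bar insertion. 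The main obstacle is verifying that this case analysis is exhaustive and deterministic --- equivalently, that the bar-insertion set $S$ produced by Lemma~\ref{skip-monomial-multiply} is recovered uniquely from $\Psi(\sigma)$; this is exactly what the uniqueness assertion in Lemma~\ref{skip-monomial-unique} buys us. The restriction statement $\OP_{n,k} \to \NNN_{n,k}$ is immediate from the zero-block characterization, since $\sigma \in \OP_{n,k}$ iff $\sigma$ has no zero block iff no variable appears to the exponent $kr$ in $\Psi(\sigma)$, i.e.\ $\Psi(\sigma) \in \NNN_{n,k}$.
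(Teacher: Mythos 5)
Your overall strategy matches the paper's — inductive well-definedness, a degree count, a zero-block characterization, and an explicit recursive inverse — and most of the pieces are in the right place. But there are two genuine gaps.

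The first and most significant is in the zero-block characterization for the bar-insertion branch. You explicitly flag the issue (``the possible zero block of $\overline{\sigma}$ may contribute $x_i^{(k-1)r}$, which is no longer the maximal exponent $kr$'') and then simply assert the conclusion. This is exactly where the real work lies: writing $\Psi(\sigma) = \Psi(\overline{\sigma}) \cdot \mm(S)^r \cdot x_n^{<kr}$, the variables hitting exponent $kr$ in $\Psi(\sigma)$ are those $i \in [n-1]$ with exponent $(k-1)r$ in $\Psi(\overline{\sigma})$ (i.e.\ $i \in \overline{Z}$) \emph{and} $i \in S$; one also needs to know no other variable can reach $kr$. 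So the claim reduces to proving $\overline{Z} \subseteq S$, and this requires an argument. The paper does it by contradiction: if $z \in \overline{Z} \setminus S$, then $x_z$ has exponent $(k-1)r$ in $\Psi(\overline{\sigma}) \cdot \mm(S)^r$, and one checks that $\xx(S \cup \{z\})^r$ divides $\Psi(\overline{\sigma}) \cdot \mm(S)^r$ (using the bound $u_p - p + 1 \leq k-1$ for the $p$-th element $u_p$ of any $(n-k+1)$-subset of $[n-1]$), contradicting the defining property of $S$ from Lemma~\ref{skip-monomial-multiply}. Without this step, the restriction statement $\Psi(\OP_{n,k}) \subseteq \NNN_{n,k}$ is also unjustified, since it depends on it.

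The second gap is in the inverse. To recurse, you need the \emph{claim} that $m'/\mm(S)^r \in \MMM_{n-1,k-1}$ once the unique $S$ of size $n-k$ with $\xx(S)^r \mid m'$ is identified; otherwise the putative $\Phi$ isn't even defined. This is a short but nontrivial verification (both the variable-power bound $x_i^{(k-1)r+1} \nmid$ and the skip condition need to be rechecked for the quotient monomial), and you don't address it. Relatedly, your description has the set sizes muddled: the failure $m' \notin \MMM_{n-1,k}$ is witnessed by a skip set of size $(n-1)-k+1 = n-k$, not $n-k+1$, and it is this size-$(n-k)$ set that you divide by; the ``$S = T \setminus \{\max T\}$'' maneuver has no place here. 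Smaller issues — your ``exponent demand $\geq r$'' argument for star insertion actually pivots on the $n \in S$ demand being $kr$, and your ``same observation'' for zero insertion is not literally the same observation since there the exponent of $x_n$ is exactly $kr$, forcing the argument through $S \setminus \{n\}$ instead — are exposition problems rather than mathematical gaps, but should be cleaned up.
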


\begin{proof}
We need to show that $\Psi$ is a well-defined function $\FFF_{n,k} \rightarrow \MMM_{n,k}$.  To do this, we induct
on $n$ (with the base case $n = 1$ being clear).  Let $\sigma = (B_1 \mid \cdots \mid B_{\ell}) \in \FFF_{n,k}$ 
and let $\overline{\sigma}$ be the 
$G_{n-1}$-face obtained by removing $n$ from $\sigma$.  Then $\overline{\sigma} \in \FFF_{n-1,k}$
(if the insertion type of $n$ was star or zero) or $\overline{\sigma} \in \FFF_{n-1,k-1}$ (if the insertion type
of $n$ was bar).  We inductively assume that 
$\Psi(\overline{\sigma}) \in \MMM_{n-1,k}$ or $\Psi(\overline{\sigma}) \in \MMM_{n-1,k-1}$ accordingly.

Suppose first that the insertion type of $n$ was star or zero, so that $\Psi(\overline{\sigma}) \in \MMM_{n-1,k}$.
Then we have
\begin{equation}
\Psi(\sigma) = \begin{cases}
\Psi(\overline{\sigma}) \cdot x_n^{r \cdot (k-j-1) + (r-c-1)} & 
\text{if $n^c \in B_{\ell - j}$ and $B_{\ell - j}$ is a nonzero nonsingleton,} \\
\Psi(\overline{\sigma}) \cdot x_n^{kr} & \text{if $n$ lies in the zero block of $\sigma$.} 
\end{cases}
\end{equation}
By induction and the inequalities $0 \leq j \leq k-1$ and $0 \leq c \leq r-1$, 
we know that none of the variable powers $x_1^{kr+1},  \dots, x_n^{kr+1}$ divide $\Psi(\sigma)$.
Let $S \subseteq [n]$ be a subset of size $|S| = n-k+1$.  Since $\Psi(\overline{\sigma}) \in \MMM_{n-1,k}^r$,
we know that $\xx(S - \{\max(S)\})^r \nmid \Psi(\overline{\sigma})$.  This implies that
$\xx(S)^r \nmid \Psi(\sigma)$.  We conclude that $\Psi(\sigma) \in \MMM_{n,k}$.

Now suppose that the insertion type of $n$ was bar, so that $\Psi(\overline{\sigma}) \in \MMM_{n-1,k-1}$.
We have
\begin{equation}
\Psi(\sigma) = \Psi(\overline{\sigma}) \cdot \mm(S)^r \cdot x_n^{r \cdot (k-j-1) + (r - c- 1)},
\end{equation}
where $B_{\ell - j} = \{n^c\}$ and $S \subseteq [n-1]$ is the unique subset of size $|S| = n-k$ guaranteed
by Lemma~\ref{skip-monomial-multiply} applied to the monomial $m = \Psi(\overline{\sigma})$.
Since none of the variable powers $x_1^{(k-1)\cdot r + 1},  \dots, x_{n-1}^{(k-1) \cdot r + 1}$
divide $\Psi(\overline{\sigma})$, we conclude that none of the variable powers 
$x_1^{kr+1}, \dots, x_n^{kr+1}$ divide $\Psi(\sigma)$.  Let $T \subseteq [n]$ satisfy $|T| = n-k+1$.  
If $n \notin T$, Lemma~\ref{skip-monomial-multiply} and induction guarantee that 
$\xx(T)^r \nmid \Psi(\sigma)$.  If $n \in T$, then the power of $x_n$ in the monomial $\xx(T)^r$ is $kr$, so that 
$\xx(T)^r \nmid \Psi(\sigma)$.  We conclude that $\Psi(\sigma) \in \MMM_{n,k}$.  This finishes the proof
that $\Psi: \FFF_{n,k} \rightarrow \MMM_{n,k}$ is well-defined.

The relationship $\coinv(\sigma) = \deg(\Psi(\sigma))$ is clear from the inductive definition of $\Psi$ and 
the previously described effect of insertion on the $\coinv$ statistic.

Let $\sigma \in \FFF_{n,k}$ be an $G_n$-face with zero block $Z$ (where $Z$ could be empty).  We aim to show that
$Z = \{ 1 \leq i \leq n \,:\, \text{the exponent of $x_i$ in $\Psi(\sigma)$ is $kr$} \}$.  To do this, we proceed by induction on 
$n$ (the case $n = 1$ being clear).  As before, let $\overline{\sigma}$ be the face obtained by erasing $n$ from $\sigma$
and let $\overline{Z}$ be the zero block of $\overline{\sigma}$.  We inductively assume that 
\begin{equation}
\overline{Z} = \begin{cases}
\{1 \leq i \leq n-1 \,:\, \text{the exponent of $x_i$ in $\Psi(\overline{\sigma})$ is $kr$} \} & 
\text{if $\overline{\sigma} \in \FFF_{n-1, k}$},  \\
\{1 \leq i \leq n-1 \,:\, \text{the exponent of $x_i$ in $\Psi(\overline{\sigma})$ is $(k-1) \cdot r$} \} & 
\text{if $\overline{\sigma} \in \FFF_{n-1, k-1}$}.
\end{cases}
\end{equation}

Suppose first that $\sigma$ was obtained from $\overline{\sigma}$ by a star insertion, so that 
$\overline{\sigma} \in \FFF_{n-1,k}$ and  $Z = \overline{Z}$.  Since the exponent of $x_n$ in $\Psi(\sigma)$ is 
$< kr$, the desired equality of sets holds in this case.

Next, suppose that $\sigma$ was obtained from $\overline{\sigma}$ by a zero insertion, so that 
$\overline{\sigma} \in \FFF_{n-1,k}$ and $Z = \overline{Z} \cup \{n\}$.  Since the exponent of $x_n$
in $\Psi(\sigma)$ is $kr$, the desired equality of sets holds in this case.

Finally, suppose that $\sigma$ was obtained from $\overline{\sigma}$ by a bar insertion, so that 
$\overline{\sigma} \in \FFF_{n-1,k-1}$ and $Z = \overline{Z}$.  Since the exponent of $x_n$ in $\Psi(\sigma)$ is
$< kr$, by induction we need only argue that $Z \subseteq S$, where $S \subseteq [n-1]$ is the 
unique subset of size $|S| = n-k$ guaranteed by Lemma~\ref{skip-monomial-multiply} applied to 
the monomial $m = \Psi(\overline{\sigma})$.  

If the containment $Z \subseteq S$ failed to hold, let
$z = Z - S$ be arbitrary.  By induction, the exponent of $x_z$ in $\Psi(\overline{\sigma})$ is $(k-1) \cdot r$.
Also, we have the divisibility $\xx(S)^r \mid \Psi(\overline{\sigma}) \cdot \mm(S)^r$.
If since $z \leq n-1$, we have the divisibility $\xx(S \cup \{z\})^r \mid \xx(S)^r \cdot x_z^{(k-1) \cdot r}$, so that
$\xx(S \cup \{z\})^r \mid \Psi(\overline{\sigma}) \cdot \mm(S)^r$, which contradicts Lemma~\ref{skip-monomial-multiply}. 
We conclude that $Z \subseteq S$.  This proves the last sentence of the proposition.

We now turn our attention to proving that $\Psi: \FFF_{n,k} \rightarrow \MMM_{n,k}$ is a bijection.
In order to prove that $\Psi$ is a bijection, we will construct its inverse $\Phi: \MMM_{n,k} \rightarrow \FFF_{n,k}$.
The map $\Phi$ will be defined by reversing the recursion used to define $\Psi$.

When $(n,k) = (1,0)$, there is only one choice for $\Phi$; we must define $\Phi: \MMM_{1,0} \rightarrow \FFF_{1,0}$
by
\begin{equation}
\Phi:  1 \mapsto (1).
\end{equation}
When $(n,k) = (1,1)$, since $\Phi$ is supposed to invert
the function $\Psi$, we are forced to define $\Phi: \MMM_{1,1} \rightarrow \FFF_{1,1}$ by
\begin{equation}
\Phi: x_1^c \mapsto (1^{r-c-1}),
\end{equation}
for $0 \leq c \leq r-1$.

In general, fix $k \leq n$ and assume inductively that the functions
\begin{equation*}
\begin{cases}
\Phi: \MMM_{n-1,k} \rightarrow \FFF_{n-1,k}, \\  \Phi: \MMM_{n-1,k-1} \rightarrow \FFF_{n-1,k-1}
\end{cases}
\end{equation*} have already
been defined.  We aim to define the function $\Phi: \MMM_{n,k} \rightarrow \FFF_{n,k}$.  To this end, let 
$m = x_1^{a_1} \cdots x_{n-1}^{a_{n-1}} x_n^{a_n} \in \MMM_{n,k}$ be a monomial.  Define a new monomial
$m' := x_1^{a_1} \cdots x_{n-1}^{a_{n-1}}$ by setting $x_n = 1$ in $m$.
Either $m' \in \MMM_{n-1,k}$ or $m' \notin \MMM_{n-1,k}$.

If $m' \in \MMM_{n-1,k}$, then $\Phi(m') = (B_1 \mid \cdots \mid B_{\ell}) \in \FFF_{n-1,k}^r$ is a
previously defined $G_{n-1}$-face.  Our definition of $\Phi(m)$ depends on the exponent $a_n$ of $x_n$ in $m$.
\begin{itemize}
\item  If $m' \in \MMM_{n-1,k}$ and $a_n < kr$, write $a_n = j \cdot r + (r-c-1)$ for a nonnegative integer $j$ and 
$0 \leq c \leq r-1$.  Let $\Phi(m)$ be obtained from $\Phi(m')$ by star inserting $n^c$ into the $j^{th}$ nonzero block
of $\Psi(m)$ from the left.
\item  If $m' \in \MMM_{n-1,k}$ and $a_n = kr$, let $\Phi(m)$ be obtained from $\Phi(m')$ by adding $n$ to 
the zero block of $\Phi(m')$ (creating a zero block if necessary).
\end{itemize}

If $m' \notin \MMM_{n-1,k}$, there exists a subset $S \subseteq [n-1]$ such that $|S| = n-k$ and
$\xx(S)^r \mid m'$.  Lemma~\ref{skip-monomial-unique} guarantees that the set $S$ is unique.

{\bf Claim:}  We have $\frac{m'}{\mm(S)^r} \in \MMM_{n-1,k-1}$.

Since $m \in \MMM_{n,k}$, we know that $\xx(T)^r \nmid \frac{m'}{\mm(S)^r}$ for all $T \subseteq [n-1]$
with $|T| = n-k+1$.  Let $1 \leq j \leq n-1$.  We need to show $x_j^{(k-1) \cdot r + 1} \nmid \frac{m'}{\mm(S)^r}$.
If $j \in S$ this is immediate from the fact that $x_j^{kr + 1} \nmid m'$.  If $j \notin S$ and 
$x_j^{(k-1) \cdot r + 1} \mid \frac{m'}{\mm(S)^r}$, then $x_j^{(k-1) \cdot r + 1} \mid m'$ and
$\xx(S \cup \{j\})^r \mid m'$, a contradiction to the assumption $m = m' \cdot x_n^{a_n} \in \MMM_{n,k}$.  This finishes the 
proof of the Claim.

By the Claim, we recursively have an $G_{n-1}$-face $\Phi \left( \frac{m'}{\mm(S)} \right) \in \FFF_{n-1,k-1}$.
Moreover, we have $a_n < kr$ (because otherwise $\xx(S \cup \{n\})^r \mid m$, contradicting $m \in \MMM_{n,k}$).
Write $a_n = j \cdot r + (r-c-1)$ for some nonnegative integer $j$ and $0 \leq c \leq r-1$.  
Form $\Phi(m)$ from $\Phi(m')$ by bar inserting the singleton block $\{n^c\}$ to the left of the $j^{th}$
nonzero block of $\Phi(m')$ from the left.

For an example of the map $\Phi$, let $(n,k,r) = (8,3,3)$ and
let $m = x_1^2 x_2^9 x_3^6 x_4^3 x_5^9 x_6^4 x_7^2 x_8^1 \in \MMM_{8,3}$.  The following table 
computes $\Phi(m) = (25 \mid 1^0 7^0 8^1 \mid 6^1 \mid 3^2 4^2)$.
Throughout this calculation, the nonzero blocks will successively become frozen (i.e., written in bold).

\begin{small}
\begin{center}
\begin{tabular}{l | l | l | l | l | l | l | l}
$m$ & $m'$ & $(n,k)$ & type & $S$ & $\frac{m'}{\mm(S)^r}$ & $(j,c)$ & $\Phi(m)$ \\ \hline
$x_1^2 x_2^9 x_3^6 x_4^3 x_5^9 x_6^4 x_7^2 x_8^1$ & 
$x_1^2 x_2^9 x_3^6 x_4^3 x_5^9 x_6^4 x_7^2$ & $(8,3)$ & star & & & $(0,1)$ & $(8^1 \mid \cdot \mid \cdot)$ \\
$x_1^2 x_2^9 x_3^6 x_4^3 x_5^9 x_6^4 x_7^2$ & 
$x_1^2 x_2^9 x_3^6 x_4^3 x_5^9 x_6^4$ & $(7,3)$ & star & & & $(0,0)$ & $(7^0 8^1 \mid \cdot \mid \cdot)$ \\
$x_1^2 x_2^9 x_3^6 x_4^3 x_5^9 x_6^4$ & 
$x_1^2 x_2^9 x_3^6 x_4^3 x_5^9$ & $(6,3)$ & bar & $235$ & $x_1^2 x_2^6 x_3^3 x_4^3 x_5^6$ 
& $(1,1)$ & $(7^0 8^1 \mid {\bf 6^1} \mid \cdot)$ \\
$x_1^2 x_2^6 x_3^3 x_4^3 x_5^6$ & $x_1^2 x_2^6 x_3^3 x_4^3$ & $(5,2)$ & zero & & & &
 $(5 \mid 7^0 8^1 \mid {\bf 6^1} \mid \cdot)$ \\
 $x_1^2 x_2^6 x_3^3 x_4^3$ & $x_1^2 x_2^6 x_3^3$ & $(4,2)$ & star & & & $(1,2)$ &
 $(5 \mid 7^0 8^1 \mid {\bf 6^1} \mid  4^2 )$ \\
  $x_1^2 x_2^6 x_3^3$ & $x_1^2 x_2^6$ & $(3,2)$ & bar & 2 & $x_1^2 x_2^3$ & $(1,2)$ &
 $(5 \mid 7^0 8^1 \mid {\bf 6^1} \mid  {\bf 3^2 4^2} )$ \\
 $x_1^2 x_2^3$ & $x_1^2$ & $(2,1)$ & zero & & & & $(25 \mid 7^0 8^1 \mid {\bf 6^1} \mid {\bf 3^2 4^2})$ \\
 $x_1^2$ & 1 & $(1,1)$ & bar & $\varnothing$ & 1 & $(0,0)$ &  $(25 \mid {\bf 1^0 7^0 8^1} \mid {\bf 6^1} \mid {\bf 3^2 4^2})$
\end{tabular}
\end{center}
\end{small}

To proceed from one row of the table to the next, we use the following procedure.
\begin{itemize}
\item  Define $m$ to be the monomial $m'$ from the above row (if the insertion type in the 
above row was star or zero) or the monomial
$\frac{m'}{\mm(S)^r}$ from the above row (if the insertion type in the above row was bar).
\item  Define $(n,k)$ in the current row to be $(n-1,k)$ from the above row (if the insertion type in the above
row was star or zero) or $(n-1,k-1)$ from the above row (if the insertion type in the above row was bar).
\item  Using the $(n,k)$ in the current row, define $m'$ from $m$ using the relation $m = m' \cdot x_n^{a_n}$.
\item  If $a_n = kr$, define the insertion type of the current row to be zero, let $\Phi(m)$ be obtained from the above
row by adjoining $n$ to its zero block (creating a new zero block if necessary), and move on to the next row.
\item  If $a_n < kr$, define $(j,c)$ by the relation $a_n = j \cdot r + (r-c-1)$, where $j$ is nonnegative and $0 \leq c \leq r-1$.
\item  If $a_n < kr$ and $m' \in \MMM_{n-1,k}$, define the insertion type of the current row to be star.  Let
$\Phi(m)$ obtained from the above row by inserting $n^c$ into the $j^{th}$ nonzero nonfrozen block from the left, and 
move on to the next row.
\item  If $a_n < kr$ and $m' \notin \MMM_{n-1,k}$, define the insertion type of the current row to be bar.  Let 
$S \subseteq [n-1]$ be the set defined by Lemma~\ref{skip-monomial-unique} as above.  Calculate $\frac{m'}{\mm(S)^r}$.
Let $\Phi(m)$ be obtained from the above row by inserting $n^c$ into the $j^{th}$ nonzero nonfrozen block from
the left and freezing that block.  Move on to the next row.
\end{itemize}

We leave it for the reader to check that the procedure defined above reverses the recursive definition of $\Psi$, so 
that $\Phi$ and $\Psi$ are mutually inverse maps.
The fact that $\Psi$ restricts to give a bijection $\OP_{n,k} \rightarrow \NNN_{n,k}$ follows from the assertion about 
zero blocks.
\end{proof}

We are ready to identify the standard monomial bases of our quotient rings $R_{n,k}$ and $S_{n,k}$.
The proof of the following result is analogous to the proof of \cite[Thm. 4.10]{HRS}.

\begin{theorem}
\label{m-is-basis}
Let $n \geq k$ be positive integers and
endow monomials in $\CC[\xx_n]$ with the lexicographic term order $<$.
\begin{itemize}
\item
The collection $\MMM_{n,k}$ of $(n,k)$-nonskip monomials in $\CC[\xx_n]$
is the standard monomial basis of $R_{n,k}$.
 \item
 The collection $\NNN_{n,k}$ of strongly $(n,k)$-nonskip monomials in $\CC[\xx_n]$ 
is the standard monomial basis of $S_{n,k}$.
 \end{itemize}
\end{theorem}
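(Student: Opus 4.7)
The plan is to sandwich the dimension of $R_{n,k}$ (respectively $S_{n,k}$) between two quantities that coincide with $|\MMM_{n,k}|$ (respectively $|\NNN_{n,k}|$), with one bound coming from Gr\"obner theory and the other from the point-set construction $Y_{n,k}$ (respectively $Z_{n,k}$). I will treat $R_{n,k}$ in detail; the argument for $S_{n,k}$ is completely analogous with $J_{n,k}, Z_{n,k}, \OP_{n,k}, \NNN_{n,k}$ replacing $I_{n,k}, Y_{n,k}, \FFF_{n,k}, \MMM_{n,k}$.

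First I would observe that Lemma~\ref{skip-leading-terms} gives the containment
\begin{equation*}
\langle x_1^{kr+1}, \dots, x_n^{kr+1} \rangle + \langle \xx(S)^r \,:\, S \subseteq [n],\, |S| = n-k+1 \rangle \subseteq \initial_<(I_{n,k}).
\end{equation*}
Every monomial $m \notin \MMM_{n,k}$ is by definition divisible by one of the generators on the left, hence lies in $\initial_<(I_{n,k})$. Consequently the standard monomial basis of $R_{n,k}$ is a \emph{subset} of $\MMM_{n,k}$, which yields the upper bound
\begin{equation*}
\dim R_{n,k} \leq |\MMM_{n,k}|.
\end{equation*}

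For the reverse inequality I would invoke Lemma~\ref{i-contained-in-t}, which tells us that $I_{n,k} \subseteq \TT(Y_{n,k})$. Taking quotients gives a surjection $R_{n,k} = \CC[\xx_n]/I_{n,k} \twoheadrightarrow \CC[\xx_n]/\TT(Y_{n,k})$, and the discussion preceding Lemma~\ref{i-contained-in-t} identifies the dimension of the target with $|Y_{n,k}|$. Since $\varphi: \FFF_{n,k} \xrightarrow{\sim} Y_{n,k}$ is a bijection, this gives
\begin{equation*}
\dim R_{n,k} \geq |Y_{n,k}| = |\FFF_{n,k}|.
\end{equation*}

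The final ingredient — and the only substantive combinatorial one, since everything else is bookkeeping — is Proposition~\ref{psi-is-bijection}, which supplies a bijection $\Psi: \FFF_{n,k} \xrightarrow{\sim} \MMM_{n,k}$ (restricting to $\OP_{n,k} \xrightarrow{\sim} \NNN_{n,k}$). From $|\MMM_{n,k}| = |\FFF_{n,k}|$ and the two inequalities above we obtain
\begin{equation*}
|\MMM_{n,k}| = |\FFF_{n,k}| \leq \dim R_{n,k} \leq |\MMM_{n,k}|,
\end{equation*}
forcing equality throughout. Since the standard monomials of $R_{n,k}$ form a linearly independent subset of $\MMM_{n,k}$ whose cardinality equals $\dim R_{n,k} = |\MMM_{n,k}|$, this subset must be all of $\MMM_{n,k}$. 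The identical argument — using $J_{n,k} \subseteq \TT(Z_{n,k})$, the bijection $\varphi|_{\OP_{n,k}}: \OP_{n,k} \xrightarrow{\sim} Z_{n,k}$, and the restriction of $\Psi$ to $\OP_{n,k} \xrightarrow{\sim} \NNN_{n,k}$ — establishes that $\NNN_{n,k}$ is the standard monomial basis of $S_{n,k}$. The main obstacle has already been cleared in Proposition~\ref{psi-is-bijection}; the present theorem is essentially a dimension-count sandwich, and no further computation is required.
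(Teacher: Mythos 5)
Your proof is correct and follows the same dimension-sandwich strategy as the paper, relying on the same three ingredients (Lemma~\ref{skip-leading-terms}, Lemma~\ref{i-contained-in-t}, and Proposition~\ref{psi-is-bijection}). The only cosmetic difference is that you obtain the lower bound $\dim R_{n,k} \geq |\FFF_{n,k}|$ from the surjection $R_{n,k} \twoheadrightarrow \CC[\xx_n]/\TT(Y_{n,k})$ directly, whereas the paper phrases it as a containment of standard monomial bases $\BBB_{n,k} \subseteq \CCC_{n,k}$ coming from $\initial_<(I_{n,k}) \subseteq \initial_<(\TT(Y_{n,k}))$; these are equivalent.
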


\begin{proof}
Let us begin with the case of $R_{n,k}$.  Recall the point set $Y_{n,k} \subseteq \CC^n$.  
Let $\BBB_{n,k}$ be the standard monomial basis of the quotient ring $\CC[\xx_n] / \TT(Y_{n,k})$.
Since $\dim(\CC[\xx_n] / \TT(Y_{n,k})) = | Y_{n,k} | = | \FFF_{n,k} |$, we have  
\begin{equation}
|\BBB_{n,k}| = | \FFF_{n,k} |.
\end{equation}

On the other hand, Lemma~\ref{i-contained-in-t} says that $I_{n,k} \subseteq \TT(Y_{n,k})$.  This leads
to the containment of initial ideals
\begin{equation}
\initial_<(I_{n,k}) \subseteq \initial_<(\TT(Y_{n,k})).
\end{equation}
If $\CCC_{n,k}$ is the standard monomial basis for $R_{n,k} = \CC[\xx_n] / I_{n,k}$, 
this implies
\begin{equation}
\BBB_{n,k} \subseteq \CCC_{n,k}.
\end{equation}
However, Lemma~\ref{skip-leading-terms} and the definition of $(n,k)$-nonskip monomials implies 
\begin{equation}
\CCC_{n,k} \subseteq \MMM_{n,k}.
\end{equation}
Proposition~\ref{psi-is-bijection} shows that $|\MMM_{n,k}| = |\FFF_{n,k}|$.  Since we already know
$\BBB_{n,k} \subseteq \MMM_{n,k}$ and $|\BBB_{n,k}| = |\FFF_{n,k}|$, we conclude that
\begin{equation}
\BBB_{n,k} = \MMM_{n,k},
\end{equation}
which proves the first assertion of the theorem.

The case of $S_{n,k}$ is similar.  An identical chain of reasoning, this time involving $Z_{n,k}$ instead
of $Y_{n,k}$, shows that $\NNN_{n,k}$ contains the standard monomial basis for $S_{n,k}$.  
Propososition~\ref{psi-is-bijection} implies that both $|\NNN_{n,k}|$ and
$\dim(S_{n,k})$ equal $|\OP_{n,k}|$.
\end{proof}

Theorem~\ref{m-is-basis} makes it easy to compute the Hilbert series of $R_{n,k}$ and $S_{n,k}$.

\begin{corollary}
\label{hilbert-series-corollary}
The graded vector spaces 
$R_{n,k}$ and $S_{n,k}$ have the following Hilbert series.
\begin{align}
\Hilb(R_{n,k}; q) &= \sum_{z = 0}^n   {n \choose z} q^{krz} \cdot \rev_q( [r]_q^{n-z} \cdot [k]!_{q^r} \cdot \Stir_{q^r}(n-z,k))   \\
 &=  \sum_{z = 0}^n  {n \choose z} q^{krz}  \cdot  [r]_q^{n-z}  \cdot [k]!_{q^r} \cdot  \rev_q(\Stir_{q^r}(n-z,k)). \\
\Hilb(S_{n,k}; q) &=   \rev_q ([r]_q^n \cdot [k]!_{q^r} \cdot \Stir_{q^r}(n,k) ) \\ &= 
[r]_q^n \cdot [k]!_{q^r} \cdot \rev_q (\Stir_{q^r}(n,k)).
\end{align}
\end{corollary}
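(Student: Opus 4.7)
The plan is to combine Theorem~\ref{m-is-basis} with the bijection $\Psi$ of Proposition~\ref{psi-is-bijection} to rewrite each Hilbert series as a $\coinv$-generating function, and then evaluate these generating functions by summing independently over colors and zero blocks. Since $\MMM_{n,k}$ and $\NNN_{n,k}$ are the standard monomial bases of $R_{n,k}$ and $S_{n,k}$, and since $\Psi : \FFF_{n,k} \to \MMM_{n,k}$ is a degree-preserving bijection that restricts to $\OP_{n,k} \to \NNN_{n,k}$ with $\deg(\Psi(\sigma)) = \coinv(\sigma)$, we immediately obtain
\begin{equation*}
\Hilb(R_{n,k};q) = \sum_{\sigma \in \FFF_{n,k}} q^{\coinv(\sigma)}, \qquad \Hilb(S_{n,k};q) = \sum_{\sigma \in \OP_{n,k}} q^{\coinv(\sigma)}.
\end{equation*}

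Next I would compute $\Hilb(S_{n,k};q)$. Writing $\coinv(\sigma) = [n(r-1)-c(\sigma)] + r\cdot p(\sigma)$, where $p(\sigma)$ denotes the number of coinversion pairs, the key observation is that since the blocks of $\sigma$ contain distinct letter values, the order $\prec$ among the letters of $\sigma$ is governed entirely by letter values, so $p(\sigma)$ depends only on the underlying uncolored ordered set partition $\pi$. Factoring the independent color contributions then gives
\begin{equation*}
\sum_{\sigma \in \OP_{n,k}} q^{\coinv(\sigma)} = \left( q^{n(r-1)} \prod_{i=1}^n \sum_{c_i=0}^{r-1} q^{-c_i} \right) \sum_{\pi} q^{r \cdot p(\pi)} = [r]_q^n \sum_\pi q^{r \cdot p(\pi)},
\end{equation*}
where $\pi$ ranges over uncolored ordered set partitions of $[n]$ into $k$ blocks. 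The $r=1$ instance of the corollary, proved in \cite{HRS}, gives $\sum_\pi q^{p(\pi)} = \rev_q([k]!_q \Stir_q(n,k))$; substituting $q \mapsto q^r$ (which commutes with $\rev_q$ on polynomials of a fixed degree) yields $\sum_\pi q^{r \cdot p(\pi)} = \rev_q([k]!_{q^r}\Stir_{q^r}(n,k))$. Since $\rev_q$ is multiplicative and both $[r]_q^n$ and $[k]!_{q^r}$ are palindromic (being products of palindromic $q$- or $q^r$-integers), these factors may be moved inside or outside of $\rev_q$ freely, which produces both displayed forms of $\Hilb(S_{n,k};q)$.

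Finally, for $\Hilb(R_{n,k};q)$ I would stratify $\FFF_{n,k}$ by the size $z$ of the zero block. For each $0 \leq z \leq n-k$, there are $\binom{n}{z}$ choices of zero block, the remaining $n-z$ letters yield an element of $\OP_{n-z,k}$, and by definition of $\coinv$ on $G_n$-faces we have $\coinv(\sigma) = krz + \coinv(\pi(\sigma))$. Therefore
\begin{equation*}
\Hilb(R_{n,k};q) = \sum_{z=0}^{n-k} \binom{n}{z} q^{krz} \,\Hilb(S_{n-z,k};q),
\end{equation*}
and substituting the $S_{n-z,k}$ formula yields both displayed forms of $\Hilb(R_{n,k};q)$, with the upper limit of $z$ harmlessly extendable to $n$ since $\Stir_{q^r}(n-z,k) = 0$ for $n-z < k$. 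The only nonroutine step is the $r=1$ identity $\sum_\pi q^{p(\pi)} = \rev_q([k]!_q \Stir_q(n,k))$, which we import from \cite{HRS}; the remainder of the argument is bookkeeping with palindromic $q$-analogs and the color decomposition of $\coinv$.
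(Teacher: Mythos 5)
Your proposal is correct, and the overall skeleton — reduce $\Hilb(R_{n,k};q)$ and $\Hilb(S_{n,k};q)$ to $\coinv$-generating functions via Theorem~\ref{m-is-basis} and Proposition~\ref{psi-is-bijection}, then stratify $\FFF_{n,k}$ by zero-block size — matches the paper exactly. Where you diverge is in how you evaluate $\sum_{\sigma \in \OP_{n,k}} q^{\coinv(\sigma)}$: the paper simply cites Steingr\'imsson \cite{Stein} for the closed form $\rev_q([r]_q^n [k]!_{q^r} \Stir_{q^r}(n,k))$, whereas you derive it directly by decomposing $\coinv$ into its color part $n(r-1)-c(\sigma)$ and its coinversion-pair part $r\cdot p(\sigma)$, observing that for ordered \emph{set} partitions (no repeated letter values) the order $\prec$ and hence $p(\sigma)$ is insensitive to colors, factoring the color sum into $[r]_q^n$, and then importing the $r=1$ identity from \cite{HRS} under $q\mapsto q^r$. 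This color-factorization observation is genuinely clean (it fails for ordered multiset partitions, which is why the paper's Lemma~\ref{m-equals-i} has to work harder, but it is exactly right for $\OP_{n,k}$) and makes the corollary self-contained modulo the $r=1$ case. The only thing to note for completeness is the tacit use that $[r]_q^n$ and $[k]!_{q^r}$ are palindromic and that $\rev_q$ is multiplicative on polynomials with nonzero constant term, both of which you correctly flag; this is what justifies moving those factors in and out of $\rev_q$ to obtain both displayed forms.
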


\begin{proof}
By Theorem~\ref{m-is-basis} and Proposition~\ref{psi-is-bijection}, we have
\begin{align}
\Hilb(R_{n,k}; q) &= \sum_{\sigma \in \FFF_{n,k}} q^{\coinv(\sigma)}, \\
\Hilb(S_{n,k}; q) &= \sum_{\sigma \in \OP_{n,k}} q^{\coinv(\sigma)},
\end{align}
so that the proof of the corollary reduces to calculating the generating function of $\coinv$ on 
$\FFF_{n,k}$ and $\OP_{n,k}$.

It follows from the work of Steingr\'imsson  \cite{Stein}  that the generating function of $\coinv$ on $\OP_{n,k}$ is
\begin{equation}
\sum_{\sigma \in \OP_{n,k}} q^{\coinv(\sigma)} = \rev_q ([r]_q^n \cdot [k]!_{q^r} \cdot \Stir_{q^r}(n,k)),
\end{equation}
proving the desired expression for $\Hilb(S_{n,k}; q)$.  For the derivation of 
$\Hilb(R_{n,k}; q)$, simply note that a zero block $Z$ of an 
$G_n$-face $\sigma \in \FFF_{n,k}$ contributes $kr \cdot |Z|$ to $\coinv(\sigma)$.
\end{proof}

The proof of Theorem~\ref{m-is-basis} also gives the {\em ungraded} isomorphism
type of the $G_n$-modules  $R_{n,k}$ and $S_{n,k}$.

\begin{corollary}
\label{ungraded-isomorphism-type}
As {\em ungraded} 
$G_n$-modules we have
$R_{n,k} \cong \CC[\FFF_{n,k}]$ and $S_{n,k} \cong \CC[\OP_{n,k}]$.
\end{corollary}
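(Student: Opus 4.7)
The plan is to run the Garsia--Procesi strategy outlined at the start of Section~\ref{Hilbert} to completion, using the pieces that Lemma~\ref{i-contained-in-t} and Theorem~\ref{m-is-basis} have already assembled. First I would record the general fact, observed earlier in the section, that for any finite $G_n$-stable point set $Y \subseteq \CC^n$ one has the chain of $G_n$-equivariant isomorphisms
\begin{equation*}
\CC[\xx_n]/\TT(Y) \;\cong_{G_n}\; \CC[\xx_n]/\II(Y) \;\cong_{G_n}\; \CC[Y],
\end{equation*}
the first because passage from $\II(Y)$ to $\TT(Y)$ is a filtration-to-associated-graded operation that preserves $G_n$-module structure, and the second by evaluation and self-duality of the permutation module.

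Next I would apply this with $Y = Y_{n,k}$ (respectively $Y = Z_{n,k}$). Since the $G_n$-equivariant bijections $Y_{n,k} \cong \FFF_{n,k}$ and $Z_{n,k} \cong \OP_{n,k}$ constructed via the map $\varphi$ are at hand, the right-hand sides become $\CC[\FFF_{n,k}]$ and $\CC[\OP_{n,k}]$ respectively. So it remains only to identify $R_{n,k}$ with $\CC[\xx_n]/\TT(Y_{n,k})$ and $S_{n,k}$ with $\CC[\xx_n]/\TT(Z_{n,k})$ as $G_n$-modules.

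For this, Lemma~\ref{i-contained-in-t} yields the ideal containments $I_{n,k} \subseteq \TT(Y_{n,k})$ and $J_{n,k} \subseteq \TT(Z_{n,k})$. Because all four ideals are $G_n$-stable and homogeneous, the induced quotient maps
\begin{equation*}
R_{n,k} \twoheadrightarrow \CC[\xx_n]/\TT(Y_{n,k}), \qquad S_{n,k} \twoheadrightarrow \CC[\xx_n]/\TT(Z_{n,k})
\end{equation*}
are $G_n$-equivariant surjections. The final ingredient is that these surjections are in fact isomorphisms, which reduces to a dimension count: Theorem~\ref{m-is-basis} together with Proposition~\ref{psi-is-bijection} gives $\dim R_{n,k} = |\MMM_{n,k}| = |\FFF_{n,k}| = |Y_{n,k}|$ and $\dim S_{n,k} = |\NNN_{n,k}| = |\OP_{n,k}| = |Z_{n,k}|$, while the source and target of each surjection have respective dimensions $|Y_{n,k}|$ and $|Z_{n,k}|$.

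There is no real obstacle here; the whole corollary is a bookkeeping consequence of the work already done. The only subtlety worth flagging in the write-up is that $\II(Y)$ is not homogeneous so the $G_n$-equivariant isomorphism $\CC[\xx_n]/\TT(Y) \cong \CC[\xx_n]/\II(Y)$ is one of \emph{ungraded} modules only, which is exactly why the conclusion is stated at the ungraded level. The graded refinement is precisely what Section~\ref{Frobenius} will have to address separately.
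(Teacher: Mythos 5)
Your proposal is correct and follows essentially the same route as the paper: reduce to showing $I_{n,k} = \TT(Y_{n,k})$ and $J_{n,k} = \TT(Z_{n,k})$ by combining the containments from Lemma~\ref{i-contained-in-t} with the dimension count extracted from Theorem~\ref{m-is-basis} and Proposition~\ref{psi-is-bijection}. The paper's published proof is just a terser statement of the same argument (it cites the equality of ideals as established inside the proof of Theorem~\ref{m-is-basis}), and your explicit emphasis on why the identification is only at the ungraded level is a good point to make in the write-up.
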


\begin{proof}
We have the following isomorphisms of ungraded $G_n$-modules:
\begin{equation}
\CC[\xx_n]/\TT(Y_{n,k}) \cong \CC[\xx_n]/I_{n,k} \cong \CC[\FFF_{n,k}]
\end{equation}
and
\begin{equation}
\CC[\xx_n]/\TT(Z_{n,k}) \cong \CC[\xx_n]/J_{n,k} \cong \CC[\OP_{n,k}].
\end{equation}
The proof of Theorem~\ref{m-is-basis} shows that $\TT(Y_{n,k}) = I_{n,k}$ and
$\TT(Z_{n,k}) = J_{n,k}$.
\end{proof}

Theorem~\ref{m-is-basis} identifies the standard monomial bases $\MMM_{n,k}$ and
$\NNN_{n,k}$ for the quotient rings 
$R_{n,k}$ and $S_{n,k}$ with respect to the lexicographic term order.  However, checking whether
monomial $m \in \CC[\xx_n]$ is (strongly) $(n,k)$-nonskip involves checking whether $\xx(S)^r \mid m$
for all possible subsets $S \subseteq [n]$ with $|S| = n-k+1$.  The next result gives a more direct characterization
of the monomials of $\MMM_{n,k}$ and $\NNN_{n,k}$.

A {\em shuffle} of a pair of sequences
$(a_1, \dots, a_p)$ and $(b_1, \dots, b_q)$ is an interleaving $(c_1, \dots, c_{p+q})$ of these sequences
which preserves the relative order of the $a$'s and $b$'s.
The following result is an extension of \cite[Thm. 4.13]{HRS} to $r \geq 2$.

\begin{theorem}
\label{artin-basis}
We have
\begin{equation}
\MMM_{n,k} =
\left\{ x_1^{a_1} \cdots x_n^{a_n} \,:\, 
\begin{array}{c}
\text{$(a_1, \dots, a_n)$ is componentwise $\leq$ some shuffle of}  \\
\text{$(r-1, 2r-1, \dots, kr-1)$ and $(kr, \dots, kr)$}
\end{array}
\right\},  \\
\end{equation}
where there are $n-k$ copies of $kr$.
Moreover, we have
\begin{equation}
\NNN_{n,k} =
\left\{ x_1^{a_1} \cdots x_n^{a_n} \,:\, 
\begin{array}{c}
\text{$(a_1, \dots, a_n)$ is componentwise $\leq$ some shuffle of}  \\
\text{$(r-1, 2r-1, \dots, kr-1)$ and $(kr-1, \dots, kr-1)$}
\end{array}
\right\},  \\
\end{equation}
where there are $n-k$ copies of $kr-1$.
\end{theorem}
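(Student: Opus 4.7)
The plan is to verify both inclusions of the purported equality for $\MMM_{n,k}$ and then to deduce the analogous statement for $\NNN_{n,k}$ by essentially the same argument. First I would reformulate the right-hand side as follows: a tuple $(a_1,\ldots,a_n)$ is componentwise dominated by some shuffle of $(r-1,2r-1,\ldots,kr-1)$ and $n-k$ copies of $kr$ if and only if there exist indices $1\leq p_1<p_2<\cdots<p_k\leq n$ with $a_{p_i}\leq ir-1$ for every $i$, together with the global bound $a_j\leq kr$ for every $j\in[n]$. The $\NNN_{n,k}$ statement is obtained by replacing the global bound by $a_j\leq kr-1$.

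For the containment of the shuffle set in $\MMM_{n,k}$, the bound $a_j\leq kr$ immediately gives $x_j^{kr+1}\nmid m$. To check that $\xx(S)^r\nmid m$ for an arbitrary subset $S=\{s_1<\cdots<s_{n-k+1}\}$, note that since $|S|=n-k+1$ exceeds the number $n-k$ of positions assigned the value $kr$, some $s_j$ must coincide with some $p_i$. Taking the smallest such $j$ and writing $s_j=p_i$, the $j-1$ elements $s_1,\ldots,s_{j-1}$ and the $i-1$ elements $p_1,\ldots,p_{i-1}$ form disjoint subsets of $\{1,\ldots,s_j-1\}$; this pigeonhole count forces $i\leq s_j-j+1$, whence $a_{s_j}\leq ir-1\leq r(s_j-j+1)-1$, strictly less than the exponent of $x_{s_j}$ in $\xx(S)^r$.

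For the reverse containment, given $m\in\MMM_{n,k}$ I would construct the indices $p_1<\cdots<p_k$ by a forward greedy: setting $p_0:=0$, let $p_i$ be the smallest $j>p_{i-1}$ with $a_j\leq ir-1$. Once such indices exist, the shuffle assigning $ir-1$ to position $p_i$ and $kr$ to every other position dominates $(a_1,\ldots,a_n)$ because $a_j\leq kr$ holds globally. The main obstacle is ruling out greedy failure. Suppose failure at stage $i\leq k$: then every $j>p_{i-1}$ satisfies $a_j\geq ir$, and by the greedy's choices each gap position $j\in(p_{l-1},p_l)$ with $l<i$ satisfies $a_j\geq lr$. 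Setting $S:=[n]\setminus\{p_1,\ldots,p_{i-1}\}$, enumerate $S=\{s_1<s_2<\cdots\}$ with $|S|=n-i+1$; one counts that exactly $s_j-j$ of the $p_l$'s lie below $s_j$, so either $s_j\in(p_{s_j-j},p_{s_j-j+1})$ with $s_j-j+1\leq i-1$ or $s_j>p_{i-1}$ with $s_j-j=i-1$, and in either case the gap bounds give $a_{s_j}\geq r(s_j-j+1)$. Hence $\xx(S)^r\mid m$. Truncating to $T:=\{s_1,\ldots,s_{n-k+1}\}$ preserves divisibility, since the exponents of $\xx(S)^r$ and $\xx(T)^r$ at the points of $T$ agree, contradicting $m\in\MMM_{n,k}$. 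The $\NNN_{n,k}$ case is handled identically: the additional global bound $a_j\leq kr-1$ coming from $x_j^{kr}\nmid m$ is precisely what ensures that the shuffle with $kr-1$ at each non-$p_i$ position dominates $(a_1,\ldots,a_n)$.
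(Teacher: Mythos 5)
Your proof is correct, and it takes a genuinely different route from the paper. The paper derives the inclusion $\MMM_{n,k} \subseteq \AAA_{n,k}$ by first establishing that $\Psi: \FFF_{n,k} \rightarrow \MMM_{n,k}$ is a bijection (Proposition~\ref{psi-is-bijection}), then showing $\Psi(\FFF_{n,k}) \subseteq \AAA_{n,k}$ by induction on $n$, splitting into cases according to the insertion type of the letter $n$ and invoking Lemma~\ref{skip-monomial-multiply} in the ``bar insertion'' case. The reverse inclusion is dismissed as a ``direct check.'' You instead give a self-contained two-sided argument: for $\AAA_{n,k} \subseteq \MMM_{n,k}$, a pigeonhole count on the positions $p_1 < \cdots < p_k$ versus the $n-k+1$ elements of $S$ to produce a variable whose exponent in $m$ is strictly less than its exponent in $\xx(S)^r$; and for $\MMM_{n,k} \subseteq \AAA_{n,k}$, a greedy construction of $p_1 < \cdots < p_k$ whose failure at stage $i$ produces, via the gap bounds $a_j \geq lr$ on $(p_{l-1},p_l)$ and the tail bound $a_j \geq ir$ beyond $p_{i-1}$, a set $S = [n]\setminus\{p_1,\ldots,p_{i-1}\}$ with $\xx(S)^r \mid m$, which then truncates to a forbidden set $T$ of size $n-k+1$. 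Both directions are verified cleanly, including the delicate count that exactly $s_j - j$ of the $p_\ell$'s lie below $s_j$. What the paper's approach buys is reuse of the $\Psi$ machinery that is needed elsewhere for the Hilbert series and module structure; what your approach buys is an elementary, combinatorially transparent characterization of $(n,k)$-nonskip monomials that does not presuppose the bijection and would also directly yield the recursion-free count $|\MMM_{n,k}|=|\FFF_{n,k}|$ if desired. The adaptation to $\NNN_{n,k}$ by replacing the global bound $kr$ with $kr-1$ is exactly right.
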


\begin{proof}
Let $\AAA_{n,k}$ and $\BBB_{n,k}$ denote the sets of monomials 
right-hand sides of the top and bottom asserted equalities,
respectively.  A direct check shows that any shuffle of $(r-1, 2r-1, \dots, kr-1)$ and $(kr, \dots, kr)$ is 
$(n,k)$-nonskip and that any shuffle of $(r-1, 2r-1, \dots, kr-1)$ and $(kr-1, \dots, kr-1)$ is
$(n,k)$-strongly nonskip.  This implies that $\AAA_{n,k} \subseteq \MMM_{n,k}$ 
and $\BBB_{n,k} \subseteq \NNN_{n,k}$.

To verify the reverse containment, consider the bijection $\Psi: \FFF_{n,k} \rightarrow \MMM_{n,k}$
of Proposition~\ref{psi-is-bijection}.  We argue that $\Psi(\FFF_{n,k}) \subseteq \AAA_{n,k}$.  
Let $\sigma \in \FFF_{n,k}$ be an $G_n$-face and let $\overline{\sigma}$ be the $G_{n-1}$-face obtained
by removing $n$ from $\sigma$.

{\bf Case 1:}  {\em $n$ is not contained in a nonzero singleton block of $\sigma$.}

In this case we have $\overline{\sigma} \in \FFF_{n-1,k}$.
We  inductively assume  $\Psi(\overline{\sigma}) \in \AAA_{n-1,k}$.  This means that there is some 
shuffle $(a_1, \dots, a_{n-1})$ of the sequences $(r-1, 2r-1, \dots, kr-1)$ and $(kr, \dots, kr)$ such that 
$\Psi(\overline{\sigma}) \mid x_1^{a_1} \cdots x_{n-1}^{a_{n-1}}$ (where there are $n-k-1$ copies of $kr$).
By the definition of $\Psi$ we have 
$\Psi(\sigma) \mid x_1^{a_1} \cdots x_{n-1}^{a_{n-1}} x_n^{kr}$, and 
$(a_1, \dots, a_{n-1}, kr)$ is a shuffle of $(r-1, 2r-1, \dots, kr-1)$ and $(kr, kr, \dots, kr)$,
where there are $n-k$ copies of $kr$.  We conclude that $\Psi(\sigma) \in \AAA_{n,k}$

{\bf Case 2:}  {\em $n$ is contained in a nonzero singleton block of $\sigma$.}

In this case we have $\overline{\sigma} \in \FFF_{n-1,k-1}$.  
We  inductively assume  $\Psi(\overline{\sigma}) \in \AAA_{n-1,k-1}$.
We have
$\Psi(\sigma) = \Psi(\overline{\sigma}) \cdot \mm(S)^r \cdot x_n^i$ for some $0 \leq i \leq kr-1$, where
$S \subseteq [n-1], |S| = n-k,$ and $\xx(S)^r \mid (\Psi(\overline{\sigma}) \cdot \mm(S)^r)$.  Consider the shuffle
$(a_1, \dots, a_n)$ of $(r-1, 2r-1, \dots, kr-1)$ and $(kr, kr, \dots, kr)$ determined by $a_j = kr$ if and only if 
$j \in S$.

We claim $\Psi(\sigma) \mid x_1^{a_1} \cdots x_n^{a_n}$, so that $\Psi(\sigma) \in \AAA_{n,k}$.  
To see this,
write $\Psi(\sigma) = x_1^{b_1} \cdots x_n^{b_n}$.
Since $\Psi(\sigma) \in \MMM_{n,k}$ we know that $0 \leq b_j \leq kr$ for all $1 \leq j \leq n$.
If $\Psi(\sigma) \nmid x_1^{a_1} \cdots x_n^{a_n}$, choose $1 \leq j \leq n$ with $a_j < b_j$; by the last sentence
we know $j \notin S$.  A direct check shows that $\xx(S \cup \{j\})^r \mid \Psi(\sigma)$, which contradicts 
 $\Psi(\sigma) \in \MMM_{n,k}$.  We conclude that $\Psi(\sigma) \in \AAA_{n,k}$.  This completes the 
 proof that $\Psi(\FFF_{n,k}) \subseteq \AAA_{n,k}$.
 
 To prove the second assertion of the theorem, one verifies  $\Psi(\OP_{n,k}) \subseteq \BBB_{n,k}$.  
 The argument follows a similar inductive pattern and is left to the reader.
\end{proof}

For example, consider the case $(n,k,r) = (5,3,2)$.  The shuffles of $(1,3,5)$ and $(6,6)$ are the ten sequences
\begin{center}
$\begin{array}{ccccc}
(1,3,5,6,6) & (1,3,6,5,6) & (1,6,3,5,6) & (6,1,3,5,6) & (1,3,6,6,5) \\
(1,6,3,6,5) & (6,1,3,6,5) & (1,6,6,3,5) & (6,1,6,3,5) & (6,6,1,3,5),
\end{array}$
\end{center}
so that the standard monomial basis $\MMM_{5,3}$ of $R_{5,3}$ with respect to the lexicographic 
term order consists of those monomials $x_1^{a_1} \cdots x_5^{a_5}$ whose exponent sequence
$(a_1, \dots, a_5)$ is componentwise $\leq$ at least one of these ten sequences.
On the other hand, the shuffles of $(1,3,5)$ and $(5,5)$ are the six sequences
\begin{center}
$\begin{array}{cccccc}
(1,3,5,5,5) & (1,5,3,5,5) & (5,1,3,5,5) & (1,5,5,3,5) & (5,1,5,3,5) & (5,5,1,3,5),
\end{array}$
\end{center}
so that the standard monomial basis $\NNN_{5,3}$ of $S_{5,3}$ consists of those monomials
$x_1^{a_1} \cdots x_5^{a_5}$ where $(a_1, \dots, a_5)$ is componentwise $\leq$ at least one of these 
six sequences.

The next result gives the reduced Gr\"obner bases of the ideals $I_{n,k}$ and $J_{n,k}$.  It is 
the extension of \cite[Thm. 4.14]{HRS} to $r \geq 2$.

\begin{theorem}
\label{groebner-basis}
Endow monomials in $\CC[\xx_n]$
with the lexicographic term order.

\begin{itemize}
\item
The variable powers
$x_1^{kr+1}, \dots, x_n^{kr+1}$, together with the polynomials 
\begin{equation*}
\overline{\kappa_{\overline{\gamma(S)}}(\xx_n^{r})}
\end{equation*}
for $S \subseteq [n]$ with $|S| = n-k+1$, form a Gr\"obner basis for the ideal $I_{n,k} \subseteq \CC[\xx_n]$.
If $n > k > 0$, this Gr\"obner basis is reduced.
\item  
The variable powers $x_1^{kr}, \dots, x_n^{kr}$, together with the polynomials
\begin{equation*}
\overline{\kappa_{\overline{\gamma(S)}}(\xx_n^{r})}
\end{equation*}
for $S \subseteq [n-1]$ with $|S| = n-k+1$, form a Gr\"obner basis for the ideal $J_{n,k} \subseteq \CC[\xx_n]$.
If $n > k > 0$, this Gr\"obner basis is reduced.
\end{itemize}
\end{theorem}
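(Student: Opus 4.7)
The plan is to verify both the Gröbner basis and reducedness claims by combining Lemma~\ref{demazure-initial-term}, Lemma~\ref{demazures-in-ideal}, and Theorem~\ref{m-is-basis}. First I would check that the proposed generators lie in the ideals: the variable powers are among the defining generators, while Lemma~\ref{demazures-in-ideal} places each $\overline{\kappa_{\overline{\gamma(S)}}(\xx_n^r)}$ inside both $I_{n,k}$ and $J_{n,k}$. Lemma~\ref{demazure-initial-term} then identifies $\initial_<(\overline{\kappa_{\overline{\gamma(S)}}(\xx_n^r)}) = \xx(S)^r$, so the leading terms of the proposed Gröbner bases are precisely the listed variable powers together with the appropriate skip monomials $\xx(S)^r$.

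To establish the Gröbner basis property I would apply the standard criterion that a set $G \subseteq I$ of nonzero polynomials is a Gröbner basis of $I$ if and only if the monomials outside $\langle \initial_<(g) : g \in G \rangle$ descend to a $\CC$-basis of $\CC[\xx_n]/I$. For $I_{n,k}$ the non-initial monomials from our candidate are, by the definition of $(n,k)$-nonskip, exactly $\MMM_{n,k}$, which Theorem~\ref{m-is-basis} identifies with the standard monomial basis of $R_{n,k}$. For $J_{n,k}$ the candidate only uses $S \subseteq [n-1]$, so I would verify that subsets $S \subseteq [n]$ with $n \in S$ contribute nothing new: any such $\xx(S)^r$ has $x_n$-exponent exactly $rk$, so divisibility by $\xx(S)^r$ already forces divisibility by $x_n^{kr}$. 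Hence the candidate's non-initial monomials still coincide with $\NNN_{n,k}$, and Theorem~\ref{m-is-basis} concludes the Gröbner basis claim.

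For reducedness under $n > k > 0$, monicity is immediate. Pairwise non-divisibility among the Demazure-type generators indexed by distinct $S \neq T$ is the final clause of Lemma~\ref{demazure-initial-term}. The remaining check---that $x_i^{kr+1}$ (respectively $x_i^{kr}$) fails to divide any term of $\overline{\kappa_{\overline{\gamma(S)}}(\xx_n^r)}$---is where I expect the main technicality to arise. Since $\overline{\kappa_{\overline{\gamma(S)}}(\xx_n^r)}$ is obtained from a polynomial in $\CC[\xx_n]$ via the substitution $\xx_n \mapsto \xx_n^r$, every exponent appearing in any monomial $m$ is a multiple of $r$; combining this with the bound $x_i^{r(\max(S)-n+k+1)} \nmid m$ from Lemma~\ref{demazure-initial-term} sharpens to the statement that the $x_i$-exponent in $m$ is at most $r(\max(S)-n+k)$. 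For $I_{n,k}$ this is at most $rk$ since $\max(S) \leq n$, ruling out divisibility by $x_i^{kr+1}$; for $J_{n,k}$ the constraint $\max(S) \leq n-1$ yields the strict bound $r(k-1) < rk$, ruling out divisibility by $x_i^{kr}$. This exponent-sharpening is the only delicate point, and once in hand the reducedness conclusion is immediate.
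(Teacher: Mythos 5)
Your proof is correct and follows essentially the same approach as the paper: Lemma~\ref{demazures-in-ideal} for membership, Theorem~\ref{m-is-basis} for the dimension count, and Lemma~\ref{demazure-initial-term} for reducedness. You are slightly more explicit than the paper in two places, and both are worth keeping: you verify that restricting to $S \subseteq [n-1]$ for $J_{n,k}$ still cuts out exactly $\NNN_{n,k}$ (via the $x_n$-exponent of $\xx(S)^r$ being $kr$ when $n \in S$), and you spell out the multiples-of-$r$ sharpening needed to turn the non-divisibility bound $x_i^{r(\max(S)-n+k+1)} \nmid m$ from Lemma~\ref{demazure-initial-term} into the exact exponent cap $\leq r(\max(S)-n+k)$ -- a step the paper attributes to the lemma but does not write out, and which is genuinely needed for $I_{n,k}$ when $\max(S) = n$.
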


\begin{proof}
By Lemma~\ref{demazures-in-ideal}, the relevant polynomials $\overline{\kappa_{\overline{\gamma(S)}}(\xx_n^{r})}$
lie
in the ideals $I_{n,k}$ and $J_{n,k}$; the given variable powers are generators of these ideals.
By Theorem~\ref{m-is-basis}, the number of monomials which do not divide any of the initial terms 
of the given polynomials equals the dimension of the corresponding quotient ring in either case.
It follows that the given sets of polynomials are Gr\"obner bases for $I_{n,k}$ and $J_{n,k}$.

Suppose $n > k > 0$.  By Lemma~\ref{demazure-initial-term}, for any distinct polynomials $f, g$ listed in
either bullet point, the leading monomial of $f$ has coefficient $1$ and does not divide any 
monomial  in $g$.  This implies the claim about reducedness.
\end{proof}

\section{Generalized descent monomial basis}
\label{Descent}

\subsection{A straightening algorithm}
For an $r$-colored permutation $g = \pi_1^{c_1} \dots \pi_n^{c_n} \in G_n$,
let $d(g) = (d_1(g), \dots, d_n(g))$ be the sequence of nonnegative integers given by
\begin{equation}
\label{d-sequence-definition}
d_i(g) := | \{ j \in \Des(\pi_1^{c_1} \dots \pi_n^{c_n}) \,:\, j \geq i \} |.
\end{equation}
We have $d_1(g) = \des(g)$ and $d_1(g) \geq \cdots \geq d_n(g)$.
Following Bango and Biagioli \cite{BB},
we define the {\em descent monomial} 
$b_g \in \CC[\xx_n]$ by the equation
\begin{equation}
\label{gs-monomial-equation}
b_g := \prod_{i = 1}^n x_{\pi_i}^{r d_i(g) + c_i}.
\end{equation}

When $r = 1$, the monomials $b_g$ were introduced by Garisa \cite{Garsia}
and further studied by Garsia and Stanton \cite{GS}.  Garsia \cite{Garsia}
proved that the collection of monomials $\{b_g \,:\, g \in \symm_n\}$ descends to a basis for the 
coinvariant algebra attached to $\symm_n$.
When $r = 2$, a slightly different family of monomials was introduced by 
Adin, Brenti, and Roichman \cite{ABR}; they proved that their monomials descend to a basis
for the coinvariant algebra attached to the hyperoctohedral group.
Bango and Biagioli \cite{BB} introduced the collection of monomials above; they proved 
that they descend to a basis for the coinvariant algebra attached to $G_n$
(and, more generally, that an appropriate subset of them descend to a basis of the 
coinvariant algebra for the
$G(r,p,n)$ family of complex reflection groups).

We will find it convenient to extend the definition of $b_g$ somewhat to `partial colored permutations' 
$g = \pi_1^{c_1} \dots \pi_m^{c_m}$, where $\pi_1, \dots, \pi_m$ are distinct integers in $[n]$
and $0 \leq c_1, \dots, c_m \leq r-1$ are colors.  The formulae
(\ref{d-sequence-definition}) and (\ref{gs-monomial-equation}) still make sense in this case and
define a monomial $b_g \in \CC[\xx_n]$.

As an example of descent monomials, consider the case $(n,r) = (8,3)$ and 
$g = \pi_1^{c_1} \dots \pi_8^{c_8} = 3^2 7^0 1^1 6^1 8^1 2^0 4^2  5^1 \in G_8$.
We calculate $\Des(g) = \{2,6\}$, so that $d(g) = (2,2,1,1,1,1,0,0)$.
The monomial $b_g \in \CC[\xx_8]$ is given by
\begin{equation*}
b_g = x_3^8 x_7^6 x_1^4 x_6^4 x_8^4 x_2^3 x_4^2 x_5^1.
\end{equation*}
Let $\overline{g} = 6^1 8^1 2^0 4^2  5^1$ be the sequence obtained by erasing the first three letters of $g$.
We leave it for the reader to check that 
\begin{equation*}
b_{\overline{g}} =  x_6^4 x_8^4 x_2^3 x_4^2 x_5^1,
\end{equation*}
so that $b_{\overline{g}}$ is obtained by truncating $b_g$.  We formalize this as an observation.

\begin{observation}
\label{truncation-observation}
Let $g = \pi_1^{c_1} \dots \pi_n^{c_n} \in G_n$ and let 
$\overline{g} = \pi_m^{c_m} \dots \pi_n^{c_n}$ for some $1 \leq m \leq n$.  If
$b_g = x_{\pi_1}^{a_1} \cdots x_{\pi_n}^{a_n}$, then $b_{\overline{g}} = x_{\pi_m}^{a_m} \cdots x_{\pi_n}^{a_n}$.
\end{observation}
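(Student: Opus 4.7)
The plan is to prove this observation by directly unwinding the definitions of $b_g$ and $b_{\overline{g}}$ from equations (\ref{d-sequence-definition}) and (\ref{gs-monomial-equation}). The key intermediate claim is that the ``tail'' $d$-sequence of $g$ and the $d$-sequence of $\overline{g}$ coincide after reindexing: namely, $d_i(\overline{g}) = d_{m+i-1}(g)$ for every $1 \leq i \leq n-m+1$.

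First I would spell out how descents of $\overline{g}$ relate to descents of $g$. Since $\overline{g} = \pi_m^{c_m}\pi_{m+1}^{c_{m+1}}\dots \pi_n^{c_n}$, position $i$ (with $1 \leq i \leq n-m$) is a descent of $\overline{g}$ with respect to the order $<$ on $\AAA_r$ if and only if $\pi_{m+i-1}^{c_{m+i-1}} > \pi_{m+i}^{c_{m+i}}$, which is if and only if position $m+i-1$ is a descent of $g$. In other words, $\Des(\overline{g}) = \{j-m+1 : j \in \Des(g), \; j \geq m\}$. Counting descents $\geq i$ in $\overline{g}$ then corresponds to counting descents $\geq m+i-1$ in $g$, giving $d_i(\overline{g}) = d_{m+i-1}(g)$.

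Next I would plug this identity into the definition of $b_{\overline{g}}$. By (\ref{gs-monomial-equation}) applied to the partial colored permutation $\overline{g}$,
\begin{equation*}
b_{\overline{g}} = \prod_{i=1}^{n-m+1} x_{\pi_{m+i-1}}^{r \cdot d_i(\overline{g}) + c_{m+i-1}}
= \prod_{i=1}^{n-m+1} x_{\pi_{m+i-1}}^{r \cdot d_{m+i-1}(g) + c_{m+i-1}}.
\end{equation*}
Reindexing via $j = m+i-1$ converts this to $\prod_{j=m}^{n} x_{\pi_j}^{r \cdot d_j(g) + c_j} = \prod_{j=m}^{n} x_{\pi_j}^{a_j} = x_{\pi_m}^{a_m}\cdots x_{\pi_n}^{a_n}$, which is exactly the claimed truncation of $b_g$.

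This is really a bookkeeping verification rather than a substantive argument; the only place any care is needed is in the index-shifting between $g$ and $\overline{g}$ when identifying their descent sets, so I would be explicit about the range $1 \leq i \leq n-m$ in that step to avoid an off-by-one error. No step presents a genuine obstacle.
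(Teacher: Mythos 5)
Your proof is correct and is precisely the direct index-shifting argument that the paper leaves implicit (the paper states this as an ``observation'' without proof because it follows immediately from the definitions in this way). The identity $d_i(\overline{g}) = d_{m+i-1}(g)$ is the right bookkeeping step, and your handling of the descent set range is accurate.
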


The most important property of the $b_g$ monomials will be a related
{\em Straightening Lemma} of Bango and Biagioli \cite{BB} (see also \cite{ABR}).
This  lemma  uses a certain partial order 
on monomials.
In order to define this partial order, we will attach colored permutations to monomials as follows.

\begin{defn}
\label{group-element-definition}
Let $m = x_1^{a_1} \cdots x_n^{a_n}$ be a monomial in $\CC[\xx_n]$.  Let
\begin{equation*}
g(m) = \pi_1^{c_1} \dots \pi_n^{c_n} \in G_n
\end{equation*}
be the $r$-colored permutation determined uniquely by the following 
conditions:
\begin{itemize}
\item  $a_{\pi_i} \geq a_{\pi_{i+1}}$ for all $1 \leq i < n$,
\item  if $a_{\pi_i} = a_{\pi_{i+1}}$ then $\pi_i < \pi_{i+1}$, and
\item  $a_i \equiv c_i$ (mod $r$).
\end{itemize}
\end{defn}

If $m = x_1^{a_1} \cdots x_n^{a_n}$ is a monomial in $\CC[\xx_n]$, let 
$\lambda(m) = (\lambda(m)_1 \geq \cdots  \geq \lambda(m)_n)$ be the  
nonincreasing
rearrangement of the 
exponent sequence $(a_1, \dots, a_n)$.
The following partial order on monomials was introduced in \cite[Sec. 3.3]{ABR}.

\begin{defn}
\label{partial-order-definition}
Let $m, m'  \in \CC[\xx_n]$ 
be monomials  and 
let $g(m) = \pi_1^{c_1} \dots \pi_n^{c_n}$ and $g(m') = \sigma_1^{e_1} \dots \sigma_n^{e_n}$ be the elements
of $G_n$ determined by Definition~\ref{group-element-definition}

We write $m \prec m'$ if  $\deg(m) = \deg(m')$
and one of the  following conditions holds:
\begin{itemize}
\item   $\lambda(m) <_{dom} \lambda(m')$, or 
\item $\lambda(m) = \lambda(m')$ and $\inv(\pi) > \inv(\sigma)$.
\end{itemize}
\end{defn}

Observe the numbers $\inv(\pi)$ and $\inv(\sigma)$ appearing in the second bullet
refer to the inversion numbers of the {\em uncolored} permutations $\pi, \sigma \in \symm_n$.





In order to state the Straightening Lemma, we will need to attach a length $n$ sequence 
$\mu(m) = (\mu(m)_1 \geq \cdots \geq \mu(m)_n)$ of nonnegative integers to any monomial
$m$.  The basic tool for doing this is as follows; its proof is similar to that of
\cite[Claim 5.1]{ABR}.

\begin{lemma}
\label{mu-lemma}
Let $m = x_1^{a_1} \cdots x_n^{a_n} \in \CC[\xx_n]$ be a monomial, let 
$g(m) = \pi_1^{c_1} \dots \pi_n^{c_n} \in G_n$
be the associated group element, and let 
$d(m) := d(g(m)) = (d_1 \geq \cdots \geq d_n)$.  The sequence
\begin{equation}
a_{\pi_1} - r d_1 - c_1, \dots, a_{\pi_n} - r d_n - c_n
\end{equation}
of exponents of $\frac{m}{b_{g(m)}}$ is a weakly decreasing sequence of nonnegative
multiples of $r$.
\end{lemma}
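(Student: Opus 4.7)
The plan is to verify three properties of the exponent sequence $\alpha_i := a_{\pi_i} - r d_i - c_i$ separately: (i) each $\alpha_i$ is divisible by $r$; (ii) the sequence $\alpha_1, \alpha_2, \ldots, \alpha_n$ is weakly decreasing; (iii) each $\alpha_i$ is nonnegative. Property (i) will be immediate from the third defining condition of Definition~\ref{group-element-definition}, which (read as $a_{\pi_i} \equiv c_i \pmod{r}$) forces $a_{\pi_i} - c_i$ to be a multiple of $r$, so $\alpha_i = (a_{\pi_i} - c_i) - r d_i$ is as well.

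For (ii) I would compare consecutive terms, using that $d_i - d_{i+1} \in \{0,1\}$ equals $1$ exactly when $i \in \Des(g(m))$. The needed inequality $\alpha_i \geq \alpha_{i+1}$ thus becomes $a_{\pi_i} - a_{\pi_{i+1}} \geq c_i - c_{i+1}$ in the non-descent case and $a_{\pi_i} - a_{\pi_{i+1}} \geq r + (c_i - c_{i+1})$ in the descent case. In the non-descent case $\pi_i^{c_i} < \pi_{i+1}^{c_{i+1}}$, which unpacks into either $c_i > c_{i+1}$ (where the mod $r$ congruences on $a_{\pi_i}, a_{\pi_{i+1}}$ together with $a_{\pi_i} \geq a_{\pi_{i+1}}$ force a gap of at least $c_i - c_{i+1}$) or $c_i = c_{i+1}$ with $\pi_i < \pi_{i+1}$ (where $a_{\pi_i} \geq a_{\pi_{i+1}}$ trivially suffices). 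In the descent case $\pi_i^{c_i} > \pi_{i+1}^{c_{i+1}}$, which unpacks into either $c_i < c_{i+1}$ (where the opposite-direction color residues combined with $a_{\pi_i} \geq a_{\pi_{i+1}}$ force $a_{\pi_i} - a_{\pi_{i+1}}$ to jump by at least $r$) or $c_i = c_{i+1}$ with $\pi_i > \pi_{i+1}$ (where the tie-breaking rule forbids equality in $a_{\pi_i} \geq a_{\pi_{i+1}}$, and the resulting strict inequality between two quantities with the same residue mod $r$ is at least $r$). Finally, (iii) follows from (ii) by reducing to the tail case $\alpha_n = a_{\pi_n} - c_n \geq 0$, which holds since $a_{\pi_n} \geq 0$, $c_n \in \{0, 1, \ldots, r-1\}$, and $a_{\pi_n} \equiv c_n \pmod{r}$.

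The main obstacle is the descent sub-case with $c_i < c_{i+1}$: here the asymmetric interaction between colors and values in the order $<$ means the desired jump of $r$ is not visible directly, and one must extract it from the mod $r$ congruences by observing that the smallest integer $\equiv c_i \pmod{r}$ that lies above $a_{\pi_{i+1}} = r s_{i+1} + c_{i+1}$ is at distance at least $r - (c_{i+1} - c_i)$. Once this point is handled, the remaining subcases are essentially immediate bookkeeping.
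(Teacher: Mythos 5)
Your proof is correct and complete. The paper itself does not give a self-contained argument for this lemma---it states only that "its proof is similar to that of [ABR, Claim 5.1]," referring to Adin--Brenti--Roichman's analogous claim for the hyperoctahedral ($r=2$) case. You have supplied exactly the case analysis that reference would provide, adapted to general $r$: the divisibility by $r$ is immediate from the congruence condition $a_{\pi_i} \equiv c_i \pmod r$ in Definition~\ref{group-element-definition}; the weak decrease is reduced, via $d_i - d_{i+1} = \mathbf{1}[i \in \Des(g(m))]$, to four subcases according to whether $i$ is a descent and how $c_i$ compares to $c_{i+1}$; and nonnegativity reduces to $\alpha_n = a_{\pi_n} - c_n \geq 0$ using $d_n = 0$. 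You also correctly identify the one subcase requiring more than bookkeeping (descent with $c_i < c_{i+1}$), where the lower bound $a_{\pi_i} - a_{\pi_{i+1}} \geq r - (c_{i+1} - c_i)$ must be extracted from the residue constraints rather than read off directly. All four subcase bounds check out, and the deduction of nonnegativity from weak decrease and the tail term is sound. One minor remark: the paper's statement of the third bullet in Definition~\ref{group-element-definition} reads literally as "$a_i \equiv c_i \pmod r$," but the worked example makes clear it means $a_{\pi_i} \equiv c_i \pmod r$, which is how you have correctly interpreted it.
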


Lemma~\ref{mu-lemma} justifies the following definition.

\begin{defn}
\label{mu-definition}
Let $m = x_1^{a_1} \cdots x_n^{a_n}$ be a monomial and 
let $(a_{\pi_1} - r d_1 - c_1 \geq \dots \geq a_{\pi_n} - r d_n - c_n)$
be the weakly decreasing sequence of nonnegative multiples of $r$ guaranteed by 
Lemma~\ref{mu-lemma}.
Let $\mu(m) = (\mu(m)_1, \dots, \mu(m)_n)$ be the partition {\em conjugate to} the partition 
\begin{equation*}
\left( \frac{a_{\pi_1} - r d_1 - c_1}{r} , \dots, \frac{a_{\pi_n} - r d_n - c_n}{r} \right).
\end{equation*}
\end{defn}

As an example, consider $(n,r) = (8,3)$ and $m = x_1^7 x_2^3 x_3^{14} x_4^2 x_5^1 x_6^7 x_7^{12} x_8^7$.
We have $\lambda(m) = (14,12,7,7,7,3,2,1)$.
We calculate $g(m) \in G_8$ to be
$g(m) = 3^2 7^0 1^1 6^1 8^1 2^0 4^2 5^1$.  From this it follows that 
$d(m) = (2,2,1,1,1,1,0,0)$.  The sequence $\mu(m)$ is determined by the equation
\begin{equation*}
3 \cdot \mu(m)' = \lambda(m) - 3 \cdot d(m) - (2,0,1,1,1,0,2,1),
\end{equation*}
from which it follows that $\mu(m)' = (2,2,1,1,1,0,0,0)$ and $\mu(m) = (5,2,0,0,0,0,0,0)$.

The Straightening Lemma of Bango and Biagioli \cite{BB}
for monomials in $\CC[\xx_n]$ is as follows.

\begin{lemma}
\label{straightening-lemma}
(Bango-Biagioli \cite{BB})
Let $m = x_1^{a_1} \cdots x_n^{a_n}$ be a monomial in $\CC[\xx_n]$.  We have 
\begin{equation}
m = e_{\mu(m)}(\xx_n^r) \cdot b_{g(m)} + \Sigma,
\end{equation}
where $\Sigma$ is a linear combination of monomials $m' \in \CC[\xx_n]$ which
satisfy $m' \prec m$.
\end{lemma}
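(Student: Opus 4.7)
The plan is to extract $m$ directly from the product $e_{\mu(m)}(\xx_n^r) \cdot b_{g(m)}$ and check that everything else in the product is strictly smaller than $m$ in the partial order $\prec$. Write $g(m) = \pi_1^{c_1}\cdots \pi_n^{c_n}$ and set $d_i := d_i(g(m))$ and $\lambda := \mu(m)'$. Combining Lemma~\ref{mu-lemma} with Definition~\ref{mu-definition} gives the identity $a_{\pi_i} = r d_i + c_i + r\lambda_i$ for each $i$, and therefore
\[
m \,=\, b_{g(m)} \cdot \prod_{i=1}^n x_{\pi_i}^{r\lambda_i}.
\]
It now suffices to identify $\prod_i x_{\pi_i}^{r\lambda_i}$ as one of the monomials appearing in the expansion of $e_{\mu(m)}(\xx_n^r)$ and to bound each of the other contributions.

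Expanding $e_{\mu(m)}(\xx_n^r) = \prod_j \sum_{|S_j|=\mu(m)_j} \prod_{i \in S_j} x_i^r$, the particular choice $S_j := \{\pi_1, \dots, \pi_{\mu(m)_j}\}$ produces exactly $\prod_i x_{\pi_i}^{r\lambda_i}$, since the number of $S_j$'s containing $\pi_i$ is $|\{ j : \mu(m)_j \geq i\}| = \mu(m)'_i = \lambda_i$. The standard triangular expansion $e_{\nu}(\xx_n) = m_{\nu'}(\xx_n) + \sum_{\rho <_{dom} \nu'} a_{\rho \nu} \, m_\rho(\xx_n)$ then shows that every other monomial $M = \prod_i x_i^{r \beta_i}$ occurring in $e_{\mu(m)}(\xx_n^r)$ has sorted exponent vector $\widetilde{\beta}$ satisfying $\widetilde{\beta} \leq_{dom} \lambda$, with equality only when $\beta$ is a permutation of $\lambda$.

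The heart of the argument is to show that each $m' := M \cdot b_{g(m)}$ with $M \ne \prod x_{\pi_i}^{r\lambda_i}$ satisfies $m' \prec m$. Since the sequence $(rd_i + c_i)_{i=1}^n$ is controlled by $g(m)$ (so that its bigger entries are attached to the bigger $\lambda_i$'s), a rearrangement-style inequality shows that if $\widetilde{\beta} <_{dom} \lambda$, then $\lambda(m') <_{dom} \lambda(m)$ and $m' \prec m$ follows from the first bullet of Definition~\ref{partial-order-definition}. In the remaining case $\widetilde{\beta} = \lambda$, the monomial $M$ differs from $\prod x_{\pi_i}^{r\lambda_i}$ only by a nontrivial permutation of the subsets $S_j$; here one shows that the uncolored permutation $\underline{g(m')}$ has strictly more inversions than $\underline{g(m)}$, and the second bullet of Definition~\ref{partial-order-definition} again yields $m' \prec m$.

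The main obstacle is this equal-partition case: one has to track how reassigning letters across the $S_j$'s forces at least one new inversion in the underlying permutation, while keeping the sorted exponent sequence fixed. This is the delicate combinatorial content in the Adin--Brenti--Roichman / Bango--Biagioli approach, and once it is in place the desired decomposition $m = e_{\mu(m)}(\xx_n^r) \cdot b_{g(m)} + \Sigma$ follows by rearranging the expansion obtained above.
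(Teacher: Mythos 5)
This lemma is not proved in the paper; the authors attribute it to Bango and Biagioli \cite{BB} and invoke it as a black box, so there is no in-text proof to compare against. Your skeleton is the right one and matches the Adin--Brenti--Roichman / Bango--Biagioli strategy: combine Lemma~\ref{mu-lemma} and Definition~\ref{mu-definition} to get $a_{\pi_i} = rd_i + c_i + r\lambda_i$ with $\lambda = \mu(m)'$, hence $m = b_{g(m)}\prod_i x_{\pi_i}^{r\lambda_i}$; observe that $\prod_i x_{\pi_i}^{r\lambda_i}$ occurs with coefficient one in $e_{\mu(m)}(\xx_n^r)$ (the coefficient of $m_{\mu'}$ in $e_\mu$ is $1$); and show every other $m' = M \cdot b_{g(m)}$ is $\prec m$ via the two bullets of Definition~\ref{partial-order-definition}. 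The dominance argument in the first case does work, once one also checks (you implicitly assume this) that $(rd_i + c_i)_{i=1}^n$ is weakly decreasing in $i$, which requires a short case analysis using the definition of $\Des$.

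However, there is a genuine gap. First, a logical mismatch: your case split $\widetilde{\beta} <_{dom} \lambda$ versus $\widetilde{\beta} = \lambda$ does not coincide with the dichotomy in Definition~\ref{partial-order-definition}. Even when $\beta$ is a permutation of $\lambda$, the sorted exponent vector $\lambda(m')$ can drop strictly below $\lambda(m)$ in dominance (one sees this whenever the permutation moves a large $\lambda_j$ to a position $i$ with small $rd_i + c_i$ and vice versa, leveling off the sorted vector), so within the equal-$\widetilde{\beta}$ case you must still split according to whether $\lambda(m')$ drops or not. Second, and more seriously, you never actually prove the inversion inequality $\inv(\underline{g(m')}) > \inv(\underline{g(m)})$ in the subcase $\lambda(m') = \lambda(m)$: you identify it as \emph{the} delicate combinatorial content and then defer to the literature. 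That deferral is the entire proof. To close this you would need to track how the tie-breaking convention in Definition~\ref{group-element-definition} (exponents decreasing, equal exponents sorted by increasing index, colors read off mod $r$) forces a nontrivial permutation of the $r\lambda_i$'s, when it preserves $\lambda(m)$, to create at least one new inversion in the underlying uncolored permutation. Without that argument, the proposal is an accurate road map, not a proof.
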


\subsection{The rings  $S_{n,k}$}
We are ready to introduce our descent-type monomials for the rings $S_{n,k}$.
This is an extension to $r \geq 1$ of the $(n,k)$-Garsia-Stanton monomials of \cite[Sec. 5]{HRS}.

\begin{defn}
\label{gs-monomial-definition}
Let  $n \geq k$. 
 The collection $\DDD_{n,k}$ of {\em $(n,k)$-descent monomials}
consists of all monomials in $\CC[\xx_n]$ of the form
\begin{equation}
b_g \cdot x_{\pi_1}^{r i_1} \cdots x_{\pi_{n-k}}^{r i_{n-k}},
\end{equation}
where $g  \in G_n$ satisfies
$\des(g) < k$ and the integer sequence $(i_1, \dots, i_{n-k})$ satisfies
\begin{equation*}
k - \des(g) > i_1 \geq \cdots \geq i_{n-k} \geq 0.
\end{equation*}
\end{defn}

As an example, consider $(n,k,r) = (7,5,2)$ and let 
$g = 2^1 5^0 6^1 1^0 3^1 4^0 7^0 \in G_7$.  It follows that 
$\Des(g) = \{2,4\}$ so that $\des(g) = 2$ and $k - \des(g) = 3$.  We have
\begin{equation*}
b_g = x_2^5 x_5^4 x_6^3 x_1^2 x_3^1,
\end{equation*}
so that Definition~\ref{gs-monomial-definition} gives rise to the following monomials in 
$\DDD_{7,5}$:
\begin{center}
$\begin{array}{ccc}
x_2^5 x_5^4 x_6^3 x_1^2 x_3^1, &
x_2^5 x_5^4 x_6^3 x_1^2 x_3^1 \cdot x_2^2, &
x_2^5 x_5^4 x_6^3 x_1^2 x_3^1 \cdot x_2^4,  \\ \\
x_2^5 x_5^4 x_6^3 x_1^2 x_3^1 \cdot x_2^2 x_5^2, &
x_2^5 x_5^4 x_6^3 x_1^2 x_3^1 \cdot x_2^4 x_5^2, &
x_2^5 x_5^4 x_6^3 x_1^2 x_3^1 \cdot x_2^4 x_5^4.
\end{array}$
\end{center}

By considering the possibilities for the sequence $(i_1 \geq \cdots \geq i_{n-k})$, we see that
\begin{equation}
|\DDD_{n,k}| \leq \sum_{g \in G_n} {n-\des(g)-1 \choose n-k} = 
\sum_{g \in G_n} {\asc(g) \choose n-k}
\end{equation}
(where we have an inequality because {\em a priori} two monomials produced by 
Definition~\ref{gs-monomial-definition} for different choices of $g$ could coincide).
If we consider an `ascent-starred' model for elements of $\OP_{n,k}$, e.g.
\begin{equation*}
2^1 _*5^1_*1^0 \, \, 6^3 \, \, 4^2_* 3^1 \in \OP_{6,3},
\end{equation*}
we see that 
\begin{equation}
\label{s-dimension-inequality}
|\DDD_{n,k}| \leq  |\OP_{n,k}| = \dim(S_{n,k}).
\end{equation}
Our next theorem implies  $|\DDD_{n,k}| = \dim(S_{n,k})$.

\begin{theorem}
\label{s-gs-basis-theorem}
The collection $\DDD_{n,k}$ of $(n,k)$-descent monomials descends to a basis of the quotient ring 
$S_{n,k}$.
\end{theorem}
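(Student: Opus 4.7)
The plan is to prove that $\DDD_{n,k}$ descends to a basis of $S_{n,k}$ by exploiting the already-established inequality $|\DDD_{n,k}| \leq \dim(S_{n,k})$ from (\ref{s-dimension-inequality}). It therefore suffices to show that $\DDD_{n,k}$ descends to a spanning set of $S_{n,k}$; equality of dimensions then forces linear independence, and we will get the equality $|\DDD_{n,k}| = |\OP_{n,k}|$ for free as a bonus.

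To show spanning, I would prove by Noetherian induction on the partial order $\prec$ that every monomial $m \in \CC[\xx_n]$ is congruent modulo $J_{n,k}$ to a $\CC$-linear combination of elements of $\DDD_{n,k}$. Given $m$, write $g(m) = \pi_1^{c_1} \cdots \pi_n^{c_n}$ and let $d_i = d_i(g(m))$, so that $d_1 = \des(g(m))$. By Lemma~\ref{mu-lemma}, $m = b_{g(m)} \cdot x_{\pi_1}^{r\mu'_1} \cdots x_{\pi_n}^{r\mu'_n}$ with $\mu' = (\mu(m)'_1 \geq \cdots \geq \mu(m)'_n)$ nonnegative. Thus, by inspection, $m$ is itself an $(n,k)$-descent monomial precisely when $\des(g(m)) < k$, $\mu(m)_1 \leq n-k$ (equivalently, $\mu'_{n-k+1} = 0$), and $\mu'_1 \leq k - \des(g(m)) - 1$.

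The inductive step then splits into three mutually exhaustive cases. If $m$ is already a descent monomial we are done. If $\mu(m)_1 \geq n-k+1$, then $e_{\mu(m)}(\xx_n^r)$ has the factor $e_{\mu(m)_1}(\xx_n^r)$, which is a generator of $J_{n,k}$; combining this with the Straightening Lemma (Lemma~\ref{straightening-lemma}) gives
\begin{equation*}
m \equiv e_{\mu(m)}(\xx_n^r) \cdot b_{g(m)} + \Sigma \equiv \Sigma \pmod{J_{n,k}},
\end{equation*}
and every monomial in $\Sigma$ is $\prec m$, so the inductive hypothesis applies. The remaining case is $\mu(m)_1 \leq n-k$ with $m$ not a descent monomial; then either $\des(g(m)) \geq k$ or $\mu'_1 \geq k - \des(g(m))$, so in both sub-situations $\des(g(m)) + \mu'_1 \geq k$. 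Consequently, the exponent of $x_{\pi_1}$ in $m$ is $r d_1 + c_1 + r\mu'_1 = r\des(g(m)) + c_1 + r\mu'_1 \geq rk$, so $x_{\pi_1}^{rk} \mid m$ and thus $m \in J_{n,k}$.

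The main obstacle I anticipate is verifying the inequality (\ref{s-dimension-inequality}) with sharpness in mind, i.e., checking that the map $g \mapsto b_g \cdot x_{\pi_1}^{ri_1} \cdots x_{\pi_{n-k}}^{ri_{n-k}}$ is actually injective so that the spanning argument forces an equality of cardinalities. This amounts to showing that the standard-form characterization $g(M) = g$ holds for every descent monomial $M$ built from $g$, which requires a short check that the exponent sequence $(r d_j + c_j + r i_j)_{j \leq n-k} \cup (r d_j + c_j)_{j > n-k}$ is weakly decreasing with the correct tie-breaking by $\pi_j$ — this follows from the fact that at non-descents of $g$ one has $\pi_j^{c_j} < \pi_{j+1}^{c_{j+1}}$ in the order $<$, which forces $c_j \geq c_{j+1}$ whenever $d_j = d_{j+1}$. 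Once this is in place, counting descent monomials via the ascent-starred model of $\OP_{n,k}$ yields the matching dimension and completes the proof.
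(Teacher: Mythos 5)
Your argument is correct and follows essentially the same route as the paper's: Noetherian induction over $\prec$ using the Straightening Lemma, with the already-established inequality $|\DDD_{n,k}| \leq \dim S_{n,k}$ converting spanning into a basis. Your case analysis is in fact slightly cleaner than the paper's, since you merge the sub-cases $\des(g(m)) \geq k$ and $\mu(m)'_1 \geq k-\des(g(m))$ into the single observation $\des(g(m)) + \mu(m)'_1 \geq k$, and you absorb the paper's explicit minimal-monomial base case into the Noetherian framework (where $\Sigma = 0$ handles it vacuously). The injectivity worry you raise at the end is unnecessary: once spanning is proved, the cardinality $|\DDD_{n,k}|$ of the \emph{set} of descent monomials is at least $\dim S_{n,k}$, and combined with the pre-existing upper bound this forces equality and linear independence, without ever needing to verify $g(M)=g$ for descent monomials $M$.
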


\begin{proof}
By Equation~\ref{s-dimension-inequality}, we need only show that $\DDD_{n,k}$ descends to a spanning
set of the quotient ring $S_{n,k}$.  To this end, let $m = x_1^{a_1} \cdots x_n^{a_n} \in \CC[\xx_n]$ be a monomial.
We will show that the coset $m + J_{n,k}$ lies in the span of $\DDD_{n,k}$ by induction on the partial order $\prec$.

Suppose $m$ is minimal with respect to the partial order $\prec$.  Let us consider the exponent sequence
$(a_1, \dots, a_n)$ of $m$.  By $\prec$-minimality, we have  
\begin{equation*}
(a_1, \dots, a_n) = (\underbrace{a, \dots, a}_p, \underbrace{a+1, \dots, a+1}_{n-p})
\end{equation*}
for some integers $a \geq 0$ and $0 < p \leq n$.  Our analysis breaks into cases depending on the values of $a$ and $p$.
\begin{itemize}
\item 
If $a \geq r$ then
$e_n(\xx_n^r) \mid m$, so that $m \equiv 0$ in the quotient $S_{n,k}$.  
\item
If $0 \leq a < r$ and $p = n$, then $m = b_g$ where
\begin{equation*}
g = 1^a 2^a \dots n^a  \in G_n.  
\end{equation*}
\item
If $0 \leq a < r-1$ and $p < n$, then $m = b_g$ where
\begin{equation*}
g = (p+1)^{a+1} (p+2)^{a+1} \dots n^{a+1} 1^a 2^a \dots p^a \in G_n.
\end{equation*}
\item
If $a = r-1$ and $0 < p < n$, then $m = b_g$ where
\begin{equation*}
g = (p+1)^0 (p+2)^0 \dots n^0 1^{r-1} 2^{r-1} \dots p^{r-1} \in G_n.
\end{equation*}
\end{itemize}
We conclude that $m + J_{n,k}$ lies in the span of $\DDD_{n,k}$.

Now let $m = x_1^{a_1} \cdots x_n^{a_n}$ be an arbitrary monomial in $\CC[\xx_n]$.  We inductively
assume that for any monomial $m'$ in $\CC[\xx_n]$ which satisfies $m' \prec m$, the coset
$m' + J_{n,k}$ lies in the span of $\DDD_{n,k}$.  We apply the Straightening Lemma~\ref{straightening-lemma}
to $m$, which yields
\begin{equation*}
m = e_{\mu(m)}(\xx_n^r) \cdot b_{g(m)} + \Sigma,
\end{equation*}
where $\Sigma$ is a linear combination of monomials $m' \prec m$; by induction, the ring element $\Sigma + J_{n,k}$
lies in the span of $\DDD_{n,k}$.

Write $d(m) = (d_1, \dots, d_n)$ and $g(m) = (\pi_1 \dots \pi_n, c_1 \dots c_n)$.  
Since $d_1 = \des(g(m))$, if $\des(g(m)) \geq k$, we would have 
$x_{\pi_1}^{kr} \mid b_{g(m)}$, so that $m \equiv \Sigma$ modulo $J_{n,k}$ and $m$ lies in the span of $\DDD_{n,k}$.
Similarly, if $\mu(m)_1 \geq n-k+1$, then $e_{\mu(m)_1}(\xx_n^r) \mid (e_{\mu(m)}(\xx_n^r )\cdot b_{g(m)})$,
so that again $m \equiv \Sigma$ modulo $J_{n,k}$ and $m$ lies in the span of $\DDD_{n,k}$.

By the last paragraph, we may assume that
\begin{center}
$\des(g(m)) < k$ and $\mu(m)_1 \leq n-k$.
\end{center}
We have the identity
\begin{equation}
m = b_{g(m)} \cdot x_{\pi_1}^{r \cdot \mu(m)'_1} \cdots x_{\pi_n}^{r \cdot \mu(m)'_n},
\end{equation}
where $\mu(m)'$ is the partition conjugate to $\mu(m)$.  Since $\mu(m)_1 \leq n-k$, we may rewrite this identity as
\begin{equation}
m = b_{g(m)} \cdot x_{\pi_1}^{r \cdot \mu(m)'_1} \cdots x_{\pi_{n-k}}^{r \cdot \mu(m)'_{n-k}},
\end{equation}
where the sequence $\mu(m)'_1, \dots, \mu(m)'_{n-k}$ is weakly decreasing.  
If $\mu(m)'_1 < k - \des(g)$, we have $m \in \DDD_{n,k}$.  
If $\mu(m)'_1 \geq k - \des(g)$, since $r \cdot \des(g)$ is $\leq$ the power of $x_{\pi_1}$ in $b_{g(m)}$,
we have $x_{\pi_1}^{kr} \mid m$, so that $m \equiv \Sigma$ modulo $J_{n,k}$.  In either case,
we have that $m + J_{n,k}$ lies in the span of $\DDD_{n,k}$.
\end{proof}

\subsection{The rings $R_{n,k}$.}
Our aim is to expand our set of monomials $\DDD_{n,k}$ to a larger set of monomials $\ED_{n,k}$
(the `extended' descent monomials) which will descend to a basis for the rings $R_{n,k}$.  

\begin{defn}
\label{extended-gs-definition}
Let the {\em extended $(n,k)$-descent monomials}  $\ED_{n,k}$  be the set of monomials of the form
\begin{equation}
\label{extended-gs-equation}
\left( \prod_{j = 1}^z  x_{\pi_j}^{kr} \right) \cdot b_{\pi_{z+1}^{c_{z+1}} \dots \pi_n^{c_n}} \cdot
\left( x_{\pi_{z+1}}^{r \cdot i_{z+1}} x_{\pi_{z+2}}^{r \cdot i_{z+2}} \cdots x_{\pi_{n-k}}^{r \cdot i_{n-k}} \right),
\end{equation}
where
\begin{itemize}
\item  we have $0 \leq z \leq n-k$,
\item 
$\pi_1^{c_1} \dots \pi_n^{c_n} \in G_n$ is a colored permutation whose length $n-z$ suffix
 $\pi_{z+1}^{c_{z+1}} \dots \pi_n^{c_n}$ satisifes
$\des(\pi_{z+1}^{c_{z+1}} \dots \pi_n^{c_n}) < k$, and
\item  we have
\begin{equation*}
k - \des(\pi_{z+1}^{c_{z+1}} \dots \pi_n^{c_n}) > i_{z+1} \geq i_{z+2} \geq  \cdots \geq i_{n-k} \geq 0.
\end{equation*}
\end{itemize}

We also set $\ED_{n,0} := \{1\}$.
\end{defn}

As an example of Definition~\ref{extended-gs-definition}, let $(n,k,r) = (7,3,2)$, let $z = 2$, and consider
the group element 
$5^1 1^1 2^0 6^0 7^0 4^1 3^0 \in G_7$.
We have $\des(2^0 6^0 7^0 4^1 3^0) = 1$, so that 
$k - \des(2^0 6^0 7^0 4^1 3^0) = 2$.  Moreover, we have
\begin{equation*}
b_{2^0 6^0 7^0 4^1 3^0} = x_2^2 x_6^2 x_7^2 x_4^1,
\end{equation*}
so that we get the following monomials in $\ED_{7,3}$:
\begin{center}
$\begin{array}{ccc}
(x_5^6 x_1^6) \cdot (x_2^2 x_6^2 x_7^2 x_4^1), & 
(x_5^6 x_1^6) \cdot (x_2^2 x_6^2 x_7^2 x_4^1) \cdot (x_2^2), &
(x_5^6 x_1^6) \cdot (x_2^2 x_6^2 x_7^2 x_4^1) \cdot (x_2^2 x_6^2).
\end{array}$
\end{center}

Observe that the monomial defined in (\ref{extended-gs-equation}) depends only on the set of letters 
$\{\pi_1, \dots, \pi_z\}$ contained in the length $z$ prefix $\pi_1^{c_1} \dots \pi_z^{c_z}$
of $\pi_1^{c_1} \dots \pi_n^{c_n}$.
We can therefore form a typical monomial in $\ED_{n,k}$ by choosing $0 \leq z \leq n-k$, then choosing a set 
$Z \subseteq [n]$ with $|Z| = z$, then forming a typical element of $\DDD_{n-z,k}$ on the variable set 
$\{x_j \,:\, j \in [n] - Z\}$, and finally multiplying by the product $\prod_{j \in Z} x_j^{kr}$.
By Theorem~\ref{s-gs-basis-theorem}, there are $|\OP_{n-z,k}|$ monomials in $\DDD_{n-z,k}$, and all
of the exponents in these monomials are $< kr$.  It follows that 
\begin{equation}
|\ED_{n,k}| = \sum_{z = 0}^{n-k} {n \choose z} |\DDD_{n-z,k}| = \sum_{z = 0}^{n-k} {n \choose z} |\OP_{n-z,k}|
= |\FFF_{n,k}| = \dim(R_{n,k}).
\end{equation}
We will show $\ED_{n,k}$ descends to a spanning set of $R_{n,k}$, and hence descends to a basis
of $R_{n,k}$.

\begin{theorem}
\label{r-gs-basis-theorem}
The set $\ED_{n,k}$
of extended $(n,k)$-descent monomials descends to a basis of $R_{n,k}$.
\end{theorem}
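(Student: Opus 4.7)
Because $|\ED_{n,k}| = \dim(R_{n,k})$ by the counting just preceding the theorem, it suffices to show that $\ED_{n,k}$ descends to a spanning set of $R_{n,k}$. The plan is to imitate the proof of Theorem~\ref{s-gs-basis-theorem} (the $S_{n,k}$ version): induct on the partial order $\prec$ on monomials, and at each step apply the Bango--Biagioli Straightening Lemma. The genuinely new feature compared to $S_{n,k}$ is that in $R_{n,k}$ a variable may legitimately attain the exponent $kr$, and such maximally occupied variables will play the role of the zero block in an extended descent monomial.

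The base case goes through essentially verbatim from Theorem~\ref{s-gs-basis-theorem}. For the inductive step applied to a monomial $m$, the Straightening Lemma gives $m = e_{\mu(m)}(\xx_n^r)\cdot b_{g(m)} + \Sigma$ where $\Sigma$ is a combination of strictly $\prec$-smaller monomials. I then split into three subcases: if some $x_i^{kr+1}$ divides $m$, then $m \equiv 0 \pmod{I_{n,k}}$; if $\mu(m)_1 \geq n-k+1$, then $e_{\mu(m)_1}(\xx_n^r) \in I_{n,k}$ forces $m \equiv \Sigma \pmod{I_{n,k}}$ and the inductive hypothesis applies; otherwise every exponent of $m$ is at most $kr$ and $\mu(m)_1 \leq n-k$, in which case I claim $m \in \ED_{n,k}$ directly.

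To verify the direct claim in the critical case, write $g(m) = \pi_1^{c_1}\cdots\pi_n^{c_n}$ and denote by $\alpha_j = r d_j(g(m)) + c_j + r\mu(m)'_j$ the exponent of $x_{\pi_j}$ in $m$. The sequence $(\alpha_j)$ is weakly decreasing by the definition of $g(m)$, so there is a unique $z$ with $\alpha_1 = \cdots = \alpha_z = kr > \alpha_{z+1}$. The identity $\alpha_j = kr$ combined with $0 \leq c_j \leq r-1$ forces $c_j = 0$ and $d_j(g(m)) + \mu(m)'_j = k$ for all $j \leq z$. Three numerical facts then need to be checked: (i)~$z \leq n-k$, which follows from $d_{n-k+1}(g) \leq k-1$ since only $k-1$ descent positions are available in $\{n-k+1,\ldots,n-1\}$; (ii)~the suffix $g' := \pi_{z+1}^{c_{z+1}}\cdots\pi_n^{c_n}$ satisfies $\des(g') < k$; and (iii)~$\mu(m)'_{z+1} < k - \des(g')$, both (ii) and (iii) falling out of the strict inequality $\alpha_{z+1} < kr$. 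Combined with Observation~\ref{truncation-observation}, which identifies $b_{g(m)}$ with $\prod_{j=1}^z x_{\pi_j}^{r d_j(g(m))}\cdot b_{g'}$, these force the factorization
\[
m \;=\; \prod_{j=1}^{z} x_{\pi_j}^{kr} \;\cdot\; b_{g'} \;\cdot\; \prod_{j=z+1}^{n-k} x_{\pi_j}^{r\mu(m)'_j},
\]
placing $m$ in $\ED_{n,k}$ exactly as prescribed by Definition~\ref{extended-gs-definition}.

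The main obstacle is the bookkeeping in this critical case: one must verify that the sorted exponent data of $m$ aligns cleanly with the prefix/suffix structure required by Definition~\ref{extended-gs-definition}, and that the constraints $c_j = 0$ for $j \leq z$, $\des(g') < k$, and the upper bound on $\mu(m)'_{z+1}$ all drop out automatically from the hypothesis $\alpha_j \leq kr$. Once this alignment is confirmed, the rest of the proof is a routine transcription of the $S_{n,k}$ argument.
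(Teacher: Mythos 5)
Your proof is correct and follows the same strategy as the paper: induct on the Bango--Biagioli partial order $\prec$, invoke the Straightening Lemma at each step, and in the critical case read off membership in $\ED_{n,k}$ directly from the exponent decomposition $\alpha_j = r\,d_j(g(m)) + c_j + r\,\mu(m)'_j$ together with Observation~\ref{truncation-observation}. The one organizational difference is cosmetic but worth flagging: the paper's proof, after reducing to $\mu(m)_1 \leq n-k$ and $\des(g(m)) \leq k$, splits into the two cases $\des(g(m)) < k$ and $\des(g(m)) = k$, whereas you fold both into a single argument by observing that the condition ``every exponent of $m$ is at most $kr$'' already forces $\des(g(m)) \leq k$ via the exponent of $x_{\pi_1}$, and that $z$ (the number of exponents equal to $kr$) carries all the information the paper extracts from the separate cases. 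Your treatment of fact~(i) is slightly terse --- the inequality $d_{n-k+1}(g) \leq k-1$ gives $z \leq n-k$ only after combining with $\mu(m)'_{n-k+1} = 0$, which follows from the standing hypothesis $\mu(m)_1 \leq n-k$ --- but the ingredients are all present, and the remaining numerical checks (ii) and (iii) are carried out correctly. The paper instead deduces $z \leq n-k$ in Case~1 by noting that every exponent in $b_{g(m)}$ is $<kr$ when $\des(g(m)) < k$, but the conclusions coincide.
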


\begin{proof}
Let $m = x_1^{a_1} \cdots x_n^{a_n}$ be a monomial in $\CC[\xx_n]$.  We argue that the coset
$m + I_{n,k} \in R_{n,k}$ lies in the  span of $\ED_{n,k}$.

Suppose first that $m$ is minimal with respect to $\prec$.  The exponent sequence $(a_1, \dots, a_n)$
has the form 
\begin{equation*}
(a_1, \dots, a_n) = (\underbrace{a, \dots, a}_p, \underbrace{a+1, \dots, a+1}_{n-p})
\end{equation*}
for some $a \geq 0$ and $0 < p \leq n$.
The same analysis as in the proof of Theorem~\ref{s-gs-basis-theorem} implies that $m \equiv 0$ (mod $I_{n,k})$
or $m \in \DDD_{n,k} \subseteq \ED_{n,k}$.

Now let $m = x_1^{a_1} \cdots x_n^{a_n} \in \CC[\xx_n]$ be an arbitrary monomial and form 
the sequence $d(m) = (d_1, \dots, d_n)$ and the colored permutation $g(m) = \pi_1^{c_1} \dots \pi_n^{c_n}$.
Apply the 
Straightening Lemma~\ref{straightening-lemma} to write
\begin{equation}
m = e_{\mu(m)} (\xx_n^r) \cdot b_{g(m)} + \Sigma,
\end{equation}
where $\Sigma$ is a linear combination of monomials $m' \in \CC[\xx_n]$ with $m' \prec m$.

We inductively assume that the ring element $\Sigma + I_{n,k}$ lies in the span of $\ED_{n,k}$.
If $\mu(m)_1 \geq n-k+1$, then $m \equiv \Sigma$ (mod $I_{n,k}$), so that $m + I_{n,k}$ lies in the 
span of $\ED_{n,k}$.  If $\des(g(m)) > k+1$, then $x_{\pi_1}^{(k+1)r} \mid b_{g(m)}$, so 
that again $m \equiv \Sigma$ (mod $I_{n,k}$) and $m + I_{n,k}$ lies in the span of $\ED_{n,k}$.

By the last paragraph, we may assume 
\begin{center}
$\mu(m)_1 \leq n-k$ and $\des(g(m)) \leq k$.
\end{center}
Our analysis breaks up into two cases depending on whether $\des(g(m)) < k$ or $\des(g(m)) = k$.

{\bf Case 1:}  {\em $\mu(m)_1 \leq n-k$ and $\des(g(m)) < k$.}

If any element in the exponent sequence $(a_1, \dots, a_n)$ of $m$ is $> kr$, then $m \equiv 0$ (mod $I_{n,k}$).
We may therefore  assume  $a_j \leq kr$ for all $j$.  

Since we have $\mu(m)_1 \leq n-k$, we have the identity
\begin{equation}
m = b_{g(m)} \cdot x_{\pi_1}^{r \cdot \mu(m)'_1} \cdots x_{\pi_{n-k}}^{r \cdot \mu(m)'_{n-k}}.
\end{equation}
If $\mu(m)'_1 < k - \des(g(m))$, we have
$m \in \DDD_{n,k} \subseteq \ED_{n,k}$.   If $\mu(m)_1' > k - \des(g(m))$, we have
$x_{\pi_1}^{(k+1) \cdot r} \mid m$, which contradicts $a_{\pi_1} \leq kr$.

By the last paragraph, we may assume  $\mu(m)'_1 = k - \des(g(m))$.  Since every term in 
the weakly decreasing sequence 
$(a_{\pi_1},  \dots, a_{\pi_n})$ is $\leq kr$, there exists an index $1 \leq z \leq n$ such that
$(a_{\pi_1}, \dots, a_{\pi_n}) = (kr, \dots, kr, a_{\pi_{z+1}}, \dots, a_{\pi_n})$,
where $a_{\pi_{z+1}} < kr$.  Since every exponent in $b_{g(m)}$ is $< kr$, we in fact have
$1 \leq z \leq n-k$.  

Let $\overline{g}$ be the partial colored permutation
$\overline{g} := \pi_{z+1}^{c_{z+1}} \dots \pi_n^{c_n}$.
Applying Observation~\ref{truncation-observation}, we have
\begin{align}
m &= b_{g(m)} \cdot x_{\pi_1}^{r \cdot \mu(m)'_1} \cdots x_{\pi_{n-k}}^{r \cdot \mu(m)'_{n-k}} \\
&=  \left( \prod_{j = 1}^z x_{\pi_j}^{kr}  \right) \cdot b_{\overline{g}}  
\cdot x_{\pi_{z+1}}^{r \cdot \mu(m)'_{z+1}} \cdots x_{\pi_{n-k}}^{r \cdot \mu(m)'_{n-k}},
\end{align}
for $1 \leq z \leq n-k$.   The monomial 
$b_{\overline{g}}  \cdot x_{\pi_{z+1}}^{r \cdot \mu(m)'_{z+1}} \cdots x_{\pi_{n-k}}^{r \cdot \mu(m)'_{n-k}}$
only involves the variables $x_{\pi_{z+1}}, \dots, x_{\pi_n}$, and every exponent in this product is
$< kr$.  If $\mu(m)'_{z+1} \geq k - \des(\overline{g})$, we would have the divisibility
$x_{\pi_{z+1}}^{kr} \mid 
b_{\overline{g}}  \cdot x_{\pi_{z+1}}^{r \cdot \mu(m)'_{z+1}} \cdots x_{\pi_{n-k}}^{r \cdot \mu(m)'_{n-k}}$,
which is a contradiction.
It follows that $\mu(m)'_{z+1} < k - \des(\overline{g})$, which implies that $m \in \ED_{n,k}$.

We conclude that the coset $m + I_{n,k}$ lies in the span of $\ED_{n,k}$, which completes this case.

{\bf Case 2:}  {\em $\mu(m)_1 \leq n-k$ and $\des(g(m)) = k$.}

As in the previous case, we may assume that every exponent appearing in the monomial $m$ is $\leq kr$.
We again write
\begin{equation}
m = b_{g(m)} \cdot x_{\pi_1}^{r \cdot \mu(m)'_1} \cdots x_{\pi_{n-k}}^{r \cdot \mu(m)'_{n-k}}
\end{equation}
and have $(a_{\pi_1} \geq \cdots \geq a_{\pi_n}) = (kr, \dots, kr, a_{\pi_{z+1}}, \dots, a_{\pi_n})$
for some $1 \leq z \leq n-k$.  Define the partial colored permutation
$\overline{g} := \pi_{z+1}^{c_{z+1}} \dots \pi_n^{c_n}$.

Since the exponent of $x_{\pi_{z+1}}$ in $m$
is $\geq r \cdot \des(\overline{g})$, we have $\des(\overline{g}) < k$.  If $\mu(m)'_{z+1} \geq k - \des(\overline{g})$,
the exponent of $x_{\pi_{z+1}}$ in $m$ would be $\geq kr$, so we must have 
$\mu(m)'_{z+1} < k - \des(\overline{g})$.
Using Observation~\ref{truncation-observation} to
write 
\begin{align}
m &= b_{g(m)} \cdot x_{\pi_1}^{r \cdot \mu(m)'_1} \cdots x_{\pi_{n-k}}^{r \cdot \mu(m)'_{n-k}} \\
&=  \left( \prod_{j = 1}^z x_{\pi_j}^{kr}  \right) \cdot b_{\overline{g}}  
\cdot x_{\pi_{z+1}}^{r \cdot \mu(m)'_{z+1}} \cdots x_{\pi_{n-k}}^{r \cdot \mu(m)'_{n-k}},
\end{align}
we see that $m \in \ED_{n,k}$.
\end{proof}

The following lemma involving expansions of monomials $m$ into the 
$\ED_{n,k}$ basis of $R_{n,k}$ will be useful in the next section.  For $0 \leq z \leq n-z$, let 
$\ED_{n,k}(z)$ be the subset of monomials in $\ED_{n,k}$ which contain exactly $z$ variables with power
$kr$.  We get a stratification
\begin{equation}
\ED_{n,k} = \ED_{n,k}(0) \uplus \ED_{n,k}(1) \uplus \cdots \uplus \ED_{n,k}(n-k).
\end{equation}
For convenience, we set $\ED_{n,k}(z) = \varnothing$ for $z > n-k$.

\begin{lemma}
\label{zero-stability-lemma}
Let $(a_1, \dots, a_n)$ satisfy $0 \leq a_i \leq kr$ for all $i$, let 
$m = x_1^{a_1} \cdots x_n^{a_n} \in \CC[\xx_n]$ be the corresponding monomial, and let 
$z := | \{1 \leq i \leq n \,:\, a_i = kr \} |$.  The expansion of $m + I_{n,k}$ in the basis $\ED_{n,k}$ of $R_{n,k}$
only involves terms in 
$\ED_{n,k}(0) \uplus \ED_{n,k}(1) \uplus \cdots \uplus \ED_{n,k}(z)$.
\end{lemma}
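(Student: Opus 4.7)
The plan is strong induction on the partial order $\prec$, restricted to the set of monomials satisfying the lemma's hypothesis $0 \leq a_i \leq kr$. For each such $m$ I would either identify $m$ directly as an element of $\ED_{n,k}(z)$ (where $z = |\{i : a_i = kr\}|$), or reduce $m$ modulo $I_{n,k}$ to a $\CC$-linear combination of $\prec$-smaller monomials to which the inductive hypothesis applies. The reduction rests on the Straightening Lemma~\ref{straightening-lemma}, which writes $m = e_{\mu(m)}(\xx_n^r) \cdot b_{g(m)} + \Sigma$ with $\Sigma$ supported on monomials $m' \prec m$; the first summand lies in $I_{n,k}$ whenever $\mu(m)_1 \geq n-k+1$ (because then $e_{\mu(m)_1}(\xx_n^r)$ is a generator) or $\des(g(m)) \geq k+1$ (because then $x_{\pi_1}^{kr+1} \mid b_{g(m)}$).

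First I would record a short dominance observation: if $\lambda \geq_{dom} \lambda'$ are partitions with all parts bounded above by $M$, then the multiplicity of $M$ in $\lambda'$ is at most that in $\lambda$. Setting $z' = |\{i : \lambda'_i = M\}|$, one has $z' M = \sum_{i \leq z'} \lambda'_i \leq \sum_{i \leq z'} \lambda_i \leq z' M$, which forces $\lambda_1 = \cdots = \lambda_{z'} = M$ and hence $z' \leq z$. Since $m' \prec m$ implies $\lambda(m) \geq_{dom} \lambda(m')$ (with equality permitted), this observation applied with $M = kr$ guarantees that every monomial appearing in $\Sigma$ still satisfies the hypothesis of the lemma with at most $z$ exponents equal to $kr$, so the inductive hypothesis is applicable term-by-term to $\Sigma$.

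For the main step I distinguish two cases. If $\mu(m)_1 \leq n-k$ and $\des(g(m)) \leq k$, then Cases 1 and 2 of the proof of Theorem~\ref{r-gs-basis-theorem} produce a factorization $m = (\prod_{j=1}^{z} x_{\pi_j}^{kr}) \cdot b_{\overline{g}} \cdot x_{\pi_{z+1}}^{r \mu(m)'_{z+1}} \cdots x_{\pi_{n-k}}^{r \mu(m)'_{n-k}}$ with $\overline{g} = \pi_{z+1}^{c_{z+1}} \cdots \pi_n^{c_n}$, in which the integer $z$ is constructed precisely as the number of leading $kr$ entries in the sorted exponent sequence $(a_{\pi_1}, \ldots, a_{\pi_n})$; this coincides with $|\{i : a_i = kr\}|$, giving $m \in \ED_{n,k}(z)$ directly. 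Otherwise at least one of the two sufficient conditions above for $e_{\mu(m)}(\xx_n^r) \cdot b_{g(m)} \in I_{n,k}$ holds, so $m \equiv \Sigma \pmod{I_{n,k}}$, and the preliminary observation together with induction express $[m]$ in the required span.

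The principal obstacle I anticipate is the exponent-tracking in the first case above: one must confirm that no $x_{\pi_j}$ with $j > z$ in the factorization reaches total exponent $kr$, which reduces to verifying $r d_j(g(m)) + c_j + r \mu(m)'_j < kr$ for every such $j$, using the inequalities $\des(\overline{g}) < k$ and $\mu(m)'_{z+1} < k - \des(\overline{g})$ established inside Theorem~\ref{r-gs-basis-theorem}'s proof. Once this matching between the combinatorial $z$ of the factorization and the numerical $z$ of the hypothesis is secured, the remainder of the argument is a uniform combination of that theorem's case analysis with the preliminary dominance lemma.
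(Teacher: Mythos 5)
Your proof is correct and takes essentially the same route as the paper's: apply the Straightening Lemma, split into the case where $m$ lands directly in $\ED_{n,k}(z)$ (via the case analysis in the proof of Theorem~\ref{r-gs-basis-theorem}) versus the case $m \equiv \Sigma \pmod{I_{n,k}}$, and iterate using the dominance bound to control the multiplicity of the exponent $kr$. The only difference is one of exposition: the paper asserts without elaboration that $\lambda(m') \leq_{dom} \lambda(m)$ forces $|\{i: a'_i = kr\}| \leq z$, while you supply the short argument for that fact; this is a welcome elaboration, not a change of method.
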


\begin{proof}
Applying the Straightening Lemma~\ref{straightening-lemma} to $m$, we get
\begin{equation}
m = e_{\mu(m)}(\xx_n^r) \cdot b_{g(m)} + \Sigma,
\end{equation}
where $\Sigma$ is a linear combination of monomials $m'$ in $\CC[\xx_n]$ which satisfy $m' \prec m$.
The proof of Theorem~\ref{r-gs-basis-theorem} shows that either
\begin{itemize}
\item the monomial $m$ is an element of $\ED_{n,k}$, and hence an element of $\ED_{n,k}(z)$, or
\item we have $m \equiv \Sigma$ (mod $I_{n,k}$).
\end{itemize}
If the first bullet holds, we are done.  We may therefore assume that $m \equiv \Sigma$ (mod $I_{n,k}$).

Let $m' = x_1^{a'_1} \cdots x_n^{a'_n}$ be a monomial
appearing in $\Sigma$.  
The dominance  relation $\lambda(m') \leq_{dom} \lambda(m)$ implies  
$| \{ 1 \leq i \leq n \,:\, a'_i = kr \} | \leq z$.  We may therefore apply the logic of the last paragraph to each such
monomial $m'$, and iterate. 
\end{proof}

\section{Frobenius series}
\label{Frobenius}

In this section we will determine the graded isomorphism types of the rings $R_{n,k}$ and $S_{n,k}$.
When $r = 1$, this was carried out for the rings $S_{n,k}$ in \cite[Sec. 6]{HRS}.  
It turns out that the methods developed in \cite[Sec. 6]{HRS} generalize fairly readily to the $S$ rings, but not
the $R$ rings.  Our approach will be to describe the $R$ rings in terms of the $S$ rings, and then
describe the isomorphism type of the $S$ rings.

\subsection{Relating  $R$ and $S$ }
In this section, we describe the graded isomorphism type of $R_{n,k}$ in terms of the rings
$S_{n,k}$.   The result here is as follows.

\begin{proposition}
\label{r-to-s-reduction}
We have an isomorphism of graded $G_n$-modules
\begin{equation}
R_{n,k} \cong \bigoplus_{z = 0}^{n-k} \Ind_{G_{(n-z,z)}}^{G_n}(S_{n-z,k}^r \otimes \CC_{krz}).
\end{equation}
Here $\CC_{krz}$ is a copy of the trivial $1$-dimensional representation of $G_z$ sitting in degree $krz$.

Equivalently, we have the identity
\begin{equation}
\grFrob(R_{n,k}; q) = \sum_{z = 0}^{n-k} q^{krz} \bm{s}_{(\varnothing, \dots, \varnothing, (z))}(\xx)
\cdot \grFrob(S_{n-z,k}^r; q).
\end{equation}
\end{proposition}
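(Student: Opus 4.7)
The plan is to establish the direct sum decomposition via a $G_n$-invariant filtration on $R_{n,k}$ whose associated graded pieces realize the induced modules. Define $A_z := \langle m_Z : Z \subseteq [n], |Z| \geq z \rangle \subseteq R_{n,k}$ where $m_Z := \prod_{j \in Z} x_j^{kr}$. Each $A_z$ is a $G_n$-stable graded ideal: the permutation part of $G_n$ acts transitively on $\{m_Z : |Z|=z\}$, while the diagonal part fixes each $m_Z$ because $\zeta^{c \cdot kr} = 1$. This gives a descending filtration $R_{n,k} = A_0 \supseteq A_1 \supseteq \cdots \supseteq A_{n-k+1}$, and I would first verify $A_{n-k+1} = 0$ by showing $m_Z \in I_{n,k}$ for $|Z| \geq n-k+1$. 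This follows from the relation $\sum_{i=0}^n (-1)^i e_i(\xx_n^r) x_l^{(n-i)r} = 0$ (obtained by evaluating $\prod_j(t-x_j^r)$ at $t = x_l^r$) combined with $x_l^{(k+1)r} = 0$ in $R_{n,k}$, which collapses the identity to $e_{n-k}(\xx_n^r) \cdot x_l^{kr} = 0$, and then iterating this with multiplications by additional $x_i^{kr}$ factors.

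By Maschke's theorem applied in each graded component, the filtration splits into a direct sum of graded $G_n$-modules, $R_{n,k} \cong \bigoplus_{z=0}^{n-k} A_z/A_{z+1}$. It remains to identify each quotient as $\Ind_{G_{(n-z,z)}}^{G_n}(S_{n-z,k} \otimes \CC_{krz})$. For each $z$-subset $Z \subseteq [n]$, realize $S_{n-z,k}^{[n]-Z} := \CC[x_i : i \notin Z]/J_{n-z,k}^{[n]-Z}$ in the complementary variables, and define
\begin{equation*}
\Phi_Z: S_{n-z,k}^{[n]-Z} \to A_z/A_{z+1}, \quad f + J \mapsto m_Z f + A_{z+1}.
\end{equation*}
Since $m_Z$ is fixed by the stabilizer $G_Z$, the map $\Phi_Z$ is $G_{(n-z,z)} = G_{[n]-Z} \times G_Z$-equivariant (with $G_Z$ contributing the trivial-in-degree-$krz$ factor $\CC_{krz}$). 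Summing over $Z$ gives, via Frobenius reciprocity, a $G_n$-equivariant map from the induced module to $A_z/A_{z+1}$, which is surjective because $A_z$ is generated by the $m_Z$'s, and bijective by the matching of graded dimensions from Corollary~\ref{hilbert-series-corollary}.

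The main obstacle is verifying well-defined
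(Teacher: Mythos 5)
Your filtration is genuinely different from the paper's, and it bypasses the extended descent monomial machinery of Section~\ref{Descent}. The paper filters $R_{n,k}$ by the \emph{ascending} chain of subspaces $R_{n,k}(z)$ spanned (modulo $I_{n,k}$) by monomials with all exponents $\leq kr$ and at most $z$ exponents equal to $kr$, and identifies the subquotients using the stratification $\ED_{n,k} = \ED_{n,k}(0) \uplus \cdots \uplus \ED_{n,k}(n-k)$ of the extended descent monomial basis together with Lemma~\ref{zero-stability-lemma}, which is itself a consequence of the Bango--Biagioli straightening lemma. Your \emph{descending} filtration by the ideals $A_z = \langle m_Z : |Z|\geq z\rangle$ is in a sense complementary; it trades the construction of an explicit basis compatible with the filtration for an ideal-membership claim, which is a reasonable and arguably cleaner route once that claim is proved.

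That claim is where your proposal has a real gap. As written, ``iterating this with multiplications by additional $x_i^{kr}$ factors'' only yields $e_{n-k}(\xx_n^r)\,m_Z \in I_{n,k}$, which is strictly weaker than $m_Z \in I_{n,k}$. The statement that is both true and exactly what makes $\Phi_Z$ well defined (and gives $A_{n-k+1}=0$ as the extreme case) is: for $|Z|=s$ with $0\leq s\leq n-k+1$ and every $d\geq n-k-s+1$, one has $m_Z\,e_d(\xx_{[n]-Z}^r)\in I_{n,k}$, with the convention $e_0=1$. Proving this requires the iteration to pass to elementary symmetric polynomials in the \emph{complementary} variable set at each step, not merely to multiply by more $x_i^{kr}$. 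Concretely, induct on $s$; for the step from $Z$ to $Z'=Z\cup\{l'\}$ substitute $t=x_{l'}^r$ into $\prod_{j\in[n]-Z}(t-x_j^r)=0$, multiply through by $m_Z$, discard the terms with $i\geq n-k-s+1$ by the inductive hypothesis and the terms with $i\leq n-k-s-1$ because they carry $x_{l'}^{(k+1)r}\in I_{n,k}$, leaving $m_{Z'}\,e_{n-k-s}(\xx_{[n]-Z}^r)\in I_{n,k}$; then transfer to $\xx_{[n]-Z'}$ via $e_d(\xx_{[n]-Z}^r)=e_d(\xx_{[n]-Z'}^r)+x_{l'}^r e_{d-1}(\xx_{[n]-Z'}^r)$, the correction term again vanishing since it contains $x_{l'}^{(k+1)r}$; the range $d>n-k-s$ for $Z'$ follows the same way directly from the inductive hypothesis. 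Once this lemma is in place, your remaining steps --- $G_n$-stability of $A_z$, surjectivity of $\bigoplus_Z\Phi_Z$ because the $m_Z$ generate $A_z$, bijectivity via the graded dimension count of Corollary~\ref{hilbert-series-corollary}, and the Maschke splitting --- are sound.
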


\begin{proof} 
For $0 \leq z \leq n-k$, let $R_{n,k}(z)$ be the subspace of $R_{n,k}$ given by
\begin{equation}
R_{n,k}(z) := \mathrm{span}_{\CC} \{ x_1^{a_1} \cdots x_n^{a_n} + I_{n,k} \,:\, 
\text{$0 \leq a_i \leq kr$ and at most $z$ of  $a_1, \dots, a_n$ equal $kr$} \}.
\end{equation}
It is clear that $R_{n,k}(z)$ is graded and stable under the action of $G_n$.  We also have a filtration
\begin{equation}
R_{n,k}(0) \subseteq R_{n,k}(1) \subseteq \cdots \subseteq R_{n,k}(n-k) = R_{n,k}.
\end{equation}
It follows that there is an isomorphism of graded $G_n$-modules 
\begin{equation}
R_{n,k} \cong Q_{n,k}^r(0) \oplus Q_{n,k}^r(1) \oplus \cdots \oplus Q_{n,k}^r(n-k),
\end{equation}
where $Q_{n,k}^r(z) := R_{n,k}(z)/R_{n,k}(z-1)$.

Consider the stratification $\ED_{n,k} = \ED_{n,k}(0) \uplus \ED_{n,k}(1) \uplus \cdots \uplus \ED_{n,k}(n-k)$
of the basis $\ED_{n,k}$ of $R_{n,k}$.  
The containment $\ED_{n,k}(z') \subseteq R_{n,k}(z)$ for $z' \leq z$ implies 
\begin{equation}
\dim(R_{n,k}(z)) \geq | \ED_{n,k}(0)| + |\ED_{n,k}(1)| + \cdots + |\ED_{n,k}(z)|.
\end{equation}
On the other hand, Lemma~\ref{zero-stability-lemma} implies that $R_{n,k}(z)$ is spanned by 
(the image of the monomials in)
$\biguplus_{z' = 0}^z \ED_{n,k}(z')$.
It follows that 
\begin{equation}
\dim(R_{n,k}(z)) = | \ED_{n,k}(0)| + |\ED_{n,k}(1)| + \cdots + |\ED_{n,k}(z)|.
\end{equation}
and $\biguplus_{z' = 0}^z \ED_{n,k}(z')$ descends to a basis of $R_{n,k}(z)$.
Consequently, the set $\ED_{n,k}(z)$ descends to a basis for $Q_{n,k}^r(z)$.

Fix $0 \leq z \leq n-k$.
It follows from the definition of 
$\ED_{n,k}(z)$ that
\begin{equation}
\dim(Q_{n,k}^r(z)) = |\ED_{n,k}(z)| = {n \choose z} \cdot |\OP_{n-z,k}| = {n \choose z} \cdot \dim(S_{n,k}),
\end{equation}
which coincides with the dimension of 
$\Ind_{G_{(n-z,z)}}^{G_n}(S_{n-z,k}^r \otimes \CC_{krz})$.  We claim that we have
an isomorphism of graded $G_n$-modules
\begin{equation}
\label{main-module-isomorphism}
Q_{n,k}^r(z) \cong \Ind_{G_{(n-z,z)}}^{G_n}(S_{n-z,k}^r \otimes \CC_{krz}).
\end{equation}

In order to prove the isomorphism (\ref{main-module-isomorphism}), 
for any $T \subseteq [n]$, let $G_{[n] - T}$ be the group of $r$-colored permutations on the index set $[n] - T$ and
let $S_{n-z,k}(T)$ be the module  $S_{n-z,k}$ 
in the variable set $\{x_j \,:\, j \in T\}$.
Any group element $g \in G_{[n] - T}$ acts trivially on the product
$\prod_{j \notin T} x_j^{kr}$.
We may therefore interpret the induction on the 
right-hand side of (\ref{main-module-isomorphism}) as 
\begin{equation}
\Ind_{G_{(z,n-z)}}^{G_n}(S_{n-z,k} \otimes \CC_{krz}) \cong
\bigoplus_{|T| = n-z} S_{n-z,k}(T) \otimes \mathrm{span} \left\{ \prod_{j \notin T} x_j^{kr} \right\},
\end{equation}
which reduces our task to proving 
\begin{equation}
\label{modified-module-isomorphism}
Q_{n,k}^r(z) \cong \bigoplus_{|T| = n-z} S_{n-z,k}(T) \otimes \mathrm{span} \left\{ \prod_{j \notin T} x_j^{kr} \right\}.
\end{equation}

The set  of monomials $\EGS_{n,k}(z)$ in $\CC[\xx_n]$ descends to a vector space basis of the
graded modules appearing on either side of
 (\ref{modified-module-isomorphism}); the corresponding identification of cosets
gives rise to an isomorphism 
\begin{equation}
\varphi: Q_{n,k}^r(z) \rightarrow 
\bigoplus_{|T| = n-z} S_{n-z,k}^r(T) \otimes \mathrm{span} \left\{ \prod_{j \notin T} x_j^{kr} \right\}.
\end{equation}
of graded vector spaces.
It is clear that $\varphi$ commutes with the action of the diagonal subgroup 
$\ZZ_r \times \cdots \times \ZZ_r \subseteq G_n$; we need only show that $\varphi$ commutes with the action 
of $\symm_n$.

The proof that the map $\varphi$ commutes with the action of $\symm_n$ uses straightening. 
Let $m = x_1^{a_1} \cdots x_n^{a_n} \in \ED_{n,k}(z)$ be a typical
basis element and let $\pi.m = x_{\pi_1}^{a_1} \cdots x_{\pi_n}^{a_n}$ be the image of $m$ under
a typical permutation $\pi \in \symm_n$.

If $\pi.m \in \ED_{n,k}(z)$ the definition of $\varphi$ yields $\varphi(\pi.m) = \pi.\varphi(m)$.

If
$\pi.m \notin \ED_{n,k}(z)$,
by Lemma~\ref{straightening-lemma}  we can write
$\pi.m = e_{\mu(\pi.m)}(\xx_n^r) \cdot b_{g(\pi.m)} + \Sigma$, where $\Sigma$ is a linear 
combination of monomials in $\CC[\xx_n]$ which are $\prec \pi.m$.
As in the proof of Lemma~\ref{zero-stability-lemma}, since $m \in \ED_{n,k}(z)$ but
$\pi.m \notin \ED_{n,k}(z)$, we know that
$\pi.m \equiv \Sigma$ in the modules on either side of Equation~\ref{modified-module-isomorphism}.
Iterating this procedure, we see that $\pi.m$ has the same expansion into the bases induced from
$\ED_{n,k}(z)$ on either side of Equation~\ref{modified-module-isomorphism}.  
This proves that the map $\varphi$ is $\symm_n$-equivariant, so that
$\varphi$ is an isomorphism of graded $G_n$-modules.
\end{proof}

\subsection{The rings $S_{n,k,s}$}

By Proposition~\ref{r-to-s-reduction}, the graded isomorphism type of $R_{n,k}$ is
determined by the graded isomorphism type of $S_{n,k}$.  The remainder 
of this section will focus on the rings $S_{n,k}$.  
As in \cite[Sec. 6]{HRS}, to determine the graded isomorphism type of $S_{n,k}$
we will introduce a more general class of quotients.

\begin{defn}
Let $n, k, s$ be positive integers with $n \geq k \geq s$.  
Define $J_{n,k,s} \subseteq \CC[\xx_n]$ to be the ideal
\begin{equation*}
J_{n,k,s} := \langle x_1^{kr}, \dots , x_n^{kr}, e_n(\xx_n^r), e_{n-1}(\xx_n^r), \dots, e_{n-s+1}(\xx_n^r) \rangle.
\end{equation*}
Let $S_{n,k,s} := \CC[\xx_n]/J_{n,k,s}$ be the corresponding quotient ring.
\end{defn}

When $s = k$ we have $J_{n,k,k} = J_{n,k}$, so that $S_{n,k,k} = S_{n,k}$.
Our aim for the remainder of this section is to build a combinatorial model for the quotient
$S_{n,k,s}$ using the point orbit technique of Section~\ref{Hilbert}.
To this end, for $n \geq k \geq s$ let $\OP_{n,k,s}$ denote the collection of $r$-colored $k$-block
ordered set partitions $\sigma = (B_1 \mid \cdots \mid B_k)$ of $[n + (k-s)]$ such that,
for $1 \leq i \leq k-s$, we have $n+i \in B_{s+i}$ and $n+i$ has color $0$.
For example, we have
\begin{equation*}
( 2^0 3^2  \mid 1^2 6^0  \mid {\bf 7^0} \mid 5^1 7^2  {\bf 8^0} \mid 4^1   {\bf 9^0} ) \in \OP^3_{6,5,2}.
\end{equation*}

Given $\sigma \in \OP_{n,k,s}$, we will refer to the letters $n+1, n+2, \dots, n+(k-s)$ as {\em big};
the remaining letters will be called {\em small}.  
The group $G_n$ acts on $\OP_{n,k,s}$ by acting on the small letters.
We model this action with a point set as follows.

\begin{defn}
Fix positive real numbers
$0 < \alpha_1 < \cdots < \alpha_k$.
 Let $Z_{n,k,s} \subseteq \CC^{n+(k-s)}$ be the collection of 
points $(z_1, \dots, z_n, z_{n+1}, \dots, z_{n+k-s})$ such that
\begin{itemize}
\item  we have $z_i \in \{ \zeta^c  \alpha_j \,:\, 0 \leq c \leq r-1, \, \, 1 \leq j \leq k\}$ for all $1 \leq i \leq n + (k-s)$,
\item  we have $\{\alpha_1, \dots, \alpha_k\} = \{|z_1|, \dots, |z_n| \}$, and
\item  we have $z_{n+i} = \alpha_{s+i}$ for all $1 \leq i \leq k-s$.
\end{itemize}
\end{defn}

It is evident that the point set $Z_{n,k,s}$ is stable under the action of $G_n$ on the first $n$ 
coordinates of $\CC^{n + (k-s)}$ and that $Z_{n,k,s}$ is isomorphic to the action of 
$G_n$ on $\OP_{n,k,s}$.

Let $\II(Z_{n,k,s}) \subseteq \CC[\xx_{n+(k-s)}]$ be the ideal of polynomials which vanish on $Y_{n,k,s}$ and let
$\TT(Y_{n,k,s}) \subseteq \CC[\xx_{n+(k-s)}]$ be the corresponding top component ideal.
Since $x_{n+i} - \alpha_{n+i} \in \II(Y_{n,k,s})$ for all $1 \leq i \leq k-s$, we have $x_{n+i} \in \TT(Y_{n,k,s})$.
Let $\varepsilon: \CC[\xx_{n+(k-s)}] \twoheadrightarrow \CC[\xx_n]$ be the map which evaluates $x_{n+i} = 0$ for all
$1 \leq i \leq k-s$ and let $T_{n,k,s} := \varepsilon(\TT(Y_{n,k,s}))$ be the image of $\TT(Y_{n,k,s})$ under 
$\varepsilon$.
Then $T_{n,k,s}$ is an ideal in $\CC[\xx_n]$ and we have an identification of 
$G_n$-modules
\begin{equation*}
\CC[\OP_{n,k,s}] \cong \CC[\xx_{n+(k-s)}]/\II(Y_{n,k,s}) \cong 
\CC[\xx_{n+(k-s)}]/\TT(Y_{n,k,s}) \cong \CC[\xx_n]/T_{n,k,s}.
\end{equation*}

It will develop that $J_{n,k,s} = T_{n,k,s}$.  We can generalize
Lemma~\ref{i-contained-in-t} to prove one containment right away.

\begin{lemma}
\label{j-contained-in-t-generalized}
We have $J_{n,k,s} \subseteq T_{n,k,s}$.
\end{lemma}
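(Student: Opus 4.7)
The plan is to imitate the proof of Lemma~\ref{i-contained-in-t}, exhibiting each generator of $J_{n,k,s}$ as the image under $\varepsilon$ of the top-degree component of some polynomial in $\II(Z_{n,k,s})$. I will handle the variable powers and the elementary symmetric generators separately.

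For the variable powers: for each $1 \leq i \leq n$, I would take the polynomial $\prod_{j=1}^k (x_i^r - \alpha_j^r) \in \CC[\xx_{n+(k-s)}]$. By the definition of $Z_{n,k,s}$, every coordinate $z_i$ with $i \leq n$ has the form $\zeta^c \alpha_j$, so $z_i^r = \alpha_j^r$ for some $1 \leq j \leq k$, and this polynomial vanishes on $Z_{n,k,s}$. Its top-degree component is $x_i^{kr}$, placing $x_i^{kr}$ in $\TT(Z_{n,k,s})$. Since $\varepsilon$ fixes $x_i$ when $i \leq n$, we get $x_i^{kr} \in T_{n,k,s}$.

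For the elementary symmetric generators $e_d(\xx_n^r)$ with $n-s+1 \leq d \leq n$: set $N := n+(k-s)$. At any point $(z_1,\dots,z_N) \in Z_{n,k,s}$, the multiset $\{z_1^r,\dots,z_N^r\}$ contains $\{\alpha_1^r,\dots,\alpha_k^r\}$ (the first $n$ coordinates already exhaust these values). Since $d \geq n-s+1 = N-k+1$, Lemma~\ref{alternating-sum-lemma} applied with the distinct scalars $\alpha_1^r,\dots,\alpha_k^r$ and with $\beta_i = z_i$ yields
\begin{equation*}
\sum_{j=0}^{d} (-1)^j\, e_{d-j}(z_1^r,\dots,z_N^r)\, h_j(\alpha_1^r,\dots,\alpha_k^r) = 0.
\end{equation*}
Hence the polynomial $f_d := \sum_{j=0}^{d} (-1)^j\, e_{d-j}(\xx_N^r)\, h_j(\alpha_1^r,\dots,\alpha_k^r)$ lies in $\II(Z_{n,k,s})$, and its top-degree component $e_d(\xx_N^r)$ belongs to $\TT(Z_{n,k,s})$. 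Applying $\varepsilon$, which kills $x_{n+1},\dots,x_{n+(k-s)}$, we obtain $\varepsilon(e_d(\xx_N^r)) = e_d(\xx_n^r)$, so $e_d(\xx_n^r) \in T_{n,k,s}$ for every $d$ in the required range.

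Combining the two displays shows that $T_{n,k,s}$ contains every generator of $J_{n,k,s}$, giving the claimed containment. I do not expect any real obstacles here: the argument is essentially the one used for Lemma~\ref{i-contained-in-t}, with the minor twist that we now work in $n+(k-s)$ ambient variables so that the point set $Z_{n,k,s}$ has a nonempty set of ``pinned'' coordinates; this pinning disappears harmlessly when we apply $\varepsilon$ because the symmetric function $e_d(\xx_N^r)$ specializes to $e_d(\xx_n^r)$ under $x_{n+i}\mapsto 0$.
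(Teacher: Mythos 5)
Your proof is correct and follows essentially the same approach as the paper: show that the top-degree component of $\prod_{j=1}^k(x_i^r - \alpha_j^r)$ gives $x_i^{kr}$, and use the alternating-sum Lemma~\ref{alternating-sum-lemma} (as in the proof of Lemma~\ref{i-contained-in-t}) in $n+(k-s)$ variables to get $e_d(\xx_{n+(k-s)}^r) \in \TT(Z_{n,k,s})$, then apply $\varepsilon$. The only difference is that you spell out the alternating-sum argument where the paper simply cites the earlier proof.
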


\begin{proof}
We show that every generator of $J_{n,k,s}$ is contained in $T_{n,k,s}$.

For $1 \leq i \leq n$ we have
$\prod_{j = 1}^r \prod_{c = 0}^{r-1} (x_i - \zeta^c  \alpha_i) \in \II(Y_{n,k,s})$, so that
$x_i^{kr} \in T_{n,k,s}$.

The proof of Lemma~\ref{i-contained-in-t} shows that $e_j(\xx_{n+(k-s)}^r) \in \TT(Y_{n,k,s})$
for all $j \geq n-s+1$.  Applying the evaluation map $\varepsilon$ gives
$\varepsilon: e_j(\xx_{n+(k-s)}^r) \mapsto e_j(\xx_n^r) \in T_{n,k,s}$.
\end{proof}

Proving the equality $J_{n,k,s} = T_{n,k,s}$ will involve a dimension count.  To facilitate this,
let us identify some terms in the initial ideal of $I_{n,k,s}$.
The following is a generalization of Lemma~\ref{skip-leading-terms}; its proof is left to the reader.

\begin{lemma}
\label{skip-leading-terms}
Let $<$ be the lexicographic term order on monomials in $\CC[\xx_n]$ and let 
$\initial_<(J_{n,k,s})$ be the initial ideal of $J_{n,k,s}$.  We have 
\begin{itemize}
\item  $x_i^{kr} \in \initial_<(J_{n,k,s})$ for $1 \leq i \leq n$, and
\item  $\xx(S)^r \in \initial_<(J_{n,k,s})$ for all $S \subseteq [n]$ with $|S| = n-s+1$.
\end{itemize}
\end{lemma}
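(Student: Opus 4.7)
The plan is to verify each bullet directly from the generators of $J_{n,k,s}$ together with the Demazure-character machinery already established.

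For the first bullet, the polynomial $x_i^{kr}$ is literally a generator of $J_{n,k,s}$, so it is its own leading term and lies in $\initial_<(J_{n,k,s})$ with nothing to prove.

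For the second bullet, let $S \subseteq [n]$ with $|S| = n-s+1$. The idea is to reuse Lemma~\ref{demazures-in-ideal} and Lemma~\ref{demazure-initial-term}, but with the role of $k$ played by $s$ (this is legal since $n \geq s$). Explicitly, applying Lemma~\ref{demazures-in-ideal} with parameter $s$ gives
\[
\overline{\kappa_{\overline{\gamma(S)}}(\xx_n^r)} \in \langle e_n(\xx_n^r), e_{n-1}(\xx_n^r), \dots, e_{n-s+1}(\xx_n^r) \rangle,
\]
and this ideal is contained in $J_{n,k,s}$ by definition. Hence $\overline{\kappa_{\overline{\gamma(S)}}(\xx_n^r)} \in J_{n,k,s}$. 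Then Lemma~\ref{demazure-initial-term}, again applied with $s$ in place of $k$, identifies
\[
\initial_<\!\bigl(\overline{\kappa_{\overline{\gamma(S)}}(\xx_n^r)}\bigr) = \xx(S)^r.
\]
Combining the two, $\xx(S)^r$ is the leading term of a nonzero element of $J_{n,k,s}$, so $\xx(S)^r \in \initial_<(J_{n,k,s})$, as claimed.

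There is no real obstacle here: the work has been done in Section~\ref{Polynomial}, and the point is simply that nothing in Lemmata~\ref{demazure-initial-term} and~\ref{demazures-in-ideal} requires the parameter to coincide with the exponent bound on the variables, so the proofs carry over with $k \rightsquigarrow s$ without modification. The only thing to double-check is that the hypothesis $n \geq s$ (which is built into the standing assumption $n \geq k \geq s$) guarantees the Demazure polynomials indexed by $(n-s+1)$-subsets of $[n]$ are well-defined, which is immediate.
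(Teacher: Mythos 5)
Your proof is correct and takes exactly the approach the paper intends: the paper explicitly leaves this proof to the reader, describing the lemma as ``a generalization'' of the earlier Lemma on $\initial_<(I_{n,k})$ and $\initial_<(J_{n,k})$, whose proof was precisely ``the first claim is because $x_i^{kr}$ is a generator; the second is a consequence of Lemma~\ref{demazure-initial-term} and Lemma~\ref{demazures-in-ideal}.'' Your substitution $k \rightsquigarrow s$ in those two lemmata is legitimate (they only require $n \geq s$, which holds), and you correctly observe that the resulting elementary-symmetric ideal $\langle e_n(\xx_n^r),\dots,e_{n-s+1}(\xx_n^r)\rangle$ sits inside $J_{n,k,s}$, so the argument carries over without change.
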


Lemma~\ref{skip-leading-terms} motivates the following generalization of strongly
$(n,k)$-nonskip monomials.

\begin{defn}
Let $\NNN_{n,k,s}$ be the collection of monomials $m \in \CC[\xx_n]$ such that
\begin{itemize}
\item $x_i^{kr} \nmid m$ for all $1 \leq i \leq m$, and
\item $\xx(S)^r \nmid m$ for all $S \subseteq [n]$ with $|S| = n-s+1$.
\end{itemize}
\end{defn}

By Lemma~\ref{skip-leading-terms}, the set $\NNN_{n,k,s}$ contains the standard monomial basis
of $S_{n,k,s}$; we will prove that these two sets of monomials coincide.
Let us first observe a relationship between the monomials in $\NNN_{n,k,s}$ and those
in $\NNN_{n+(k-s),k}$.

\begin{lemma}
\label{nonskip-monomial-factor}
If $x_1^{a_1} \cdots x_n^{a_n} x_{n+1}^{a_{n+1}} \cdots x_{n+(k-s)}^{a_{n+(k-s)}} \in \NNN_{n+(k-s),k}$,
then $x_1^{a_1} \cdots x_n^{a_n} \in \NNN_{n,k,s}$.  

Conversely, if $x_1^{a_1} \cdots x_n^{a_n} \in \NNN_{n,k,s}$ and
$0 \leq a_{n+1} < a_{n+2} < \cdots < a_{n+(k-s)} < kr$ satisfy 
\begin{equation*}
a_{n+1} \equiv a_{n+2} \equiv \cdots \equiv a_{n+(k-s)} \equiv i \text{ (mod $r$)}
\end{equation*}
for some $0 \leq i \leq r-1$   , then
$x_1^{a_1} \cdots x_n^{a_n} x_{n+1}^{a_{n+1}} \cdots x_{n+(k-s)}^{a_{n+(k-s)}} \in \NNN_{n+(k-s),k}$.
\end{lemma}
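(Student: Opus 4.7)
The plan is to prove both implications by unpacking the two defining conditions for $\NNN_{n,k,s}$ and $\NNN_{n+(k-s),k}$ and checking them directly, with the genuine work concentrated in the ``skip'' condition of the reverse direction.

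For the forward direction, write $m = m' \cdot x_{n+1}^{a_{n+1}} \cdots x_{n+(k-s)}^{a_{n+(k-s)}}$. The variable-power condition $x_i^{kr} \nmid m'$ for $1 \le i \le n$ is immediate from $x_i^{kr} \nmid m$, since $m$ and $m'$ carry the same exponent on $x_1, \ldots, x_n$. For the skip condition, let $S \subseteq [n]$ with $|S| = n-s+1$; note that $n-s+1 = (n+(k-s)) - k + 1$, so $S$ qualifies as a test set for $\NNN_{n+(k-s),k}$. The monomial $\xx(S)^r$ only involves the variables $\{x_i : i \in S\} \subseteq \{x_1, \ldots, x_n\}$, so divisibility by $m$ is equivalent to divisibility by $m'$, and the desired non-divisibility transfers.

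For the reverse direction, the variable-power condition on $x_i^{kr}$ for $i \le n$ comes from $m' \in \NNN_{n,k,s}$ and for $i = n+j$ comes from $a_{n+j} < kr$ by hypothesis. The content lies in checking $\xx(T)^r \nmid m$ for every $T \subseteq [n+(k-s)]$ with $|T| = n-s+1$. Split into cases. If $T \subseteq [n]$, use the same ``only small variables appear in $\xx(T)^r$'' observation as in the forward direction. If $T$ meets $\{n+1, \ldots, n+(k-s)\}$, enumerate $T \cap \{n+1, \ldots, n+(k-s)\} = \{n+j_1 < \cdots < n+j_p\}$. The position of $n+j_q$ inside $T$ equals $|T \cap [n]| + q = (n-s+1-p) + q$, so the exponent of $x_{n+j_q}$ in $\xx(T)^r$ is
\begin{equation*}
r \cdot \bigl( (n+j_q) - (n-s+1-p+q) + 1 \bigr) \;=\; r\,(s + j_q + p - q).
\end{equation*}
On the other hand, from $a_{n+1} < a_{n+2} < \cdots < a_{n+(k-s)} < kr$ with all values congruent to $i \pmod r$, consecutive $a$'s differ by at least $r$, giving
\begin{equation*}
a_{n+j_q} \;\le\; a_{n+(k-s)} - \bigl( (k-s) - j_q \bigr)\,r \;\le\; (k-1)r + i - \bigl((k-s) - j_q\bigr)r \;=\; r(s+j_q-1) + i \;\le\; r(s+j_q)-1.
\end{equation*}
Since $p \ge q$, this yields $a_{n+j_q} < r(s+j_q) \le r(s+j_q+p-q)$, so the exponent of $x_{n+j_q}$ in $\xx(T)^r$ strictly exceeds the one in $m$, and $\xx(T)^r \nmid m$.

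The main obstacle is this last estimate: one needs to combine the strict monotonicity of the $a_{n+j}$ with the common residue mod $r$ to obtain the sharp bound $a_{n+j} \le r(s+j-1)+i$, and then to notice that the position-in-$T$ calculation always produces a matching exponent $r(s+j_q+p-q)$ that dominates it. All of the hypotheses in the converse (the strictness, the common residue, and the bound by $kr$) are consumed precisely in this estimate, which is a good sanity check that the statement is sharp.
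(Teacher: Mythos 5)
Your proof is correct and follows essentially the same route as the paper: both directions reduce to the observation that the skip condition on sets $T$ contained in $[n]$ is inherited, and for the reverse direction both show that the exponent forced on a big variable $x_{n+j}$ by any qualifying skip monomial $\xx(T)^r$ is at least $r(s+j)$ while the strict monotonicity, common residue, and bound $a_{n+(k-s)} < kr$ force $a_{n+j} < r(s+j)$. You spell out the position-in-$T$ computation and the inequality $p \ge q$ explicitly, where the paper simply asserts the lower bound $r(s+i)$ on the skip exponent, but the underlying argument is identical.
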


\begin{proof}
The first statement is clear from the definitions of $\NNN_{n+(k-s),k}$ and $\NNN_{n,k,s}$.  For the second statement,
let $m' := x_1^{a_1} \cdots x_n^{a_n} \in \NNN_{n,k,s}$ and let $0 \leq a_{n+1} < a_{n+2} < \cdots < a_{n+(k-s)} < kr$
be as in the statement of the lemma.
We argue that $m := x_1^{a_1} \cdots x_n^{a_n} x_{n+1}^{a_{n+1}} \cdots x_{n+(k-s)}^{a_{n+(k-s)}} \in \NNN_{n+(k-s),k}$.

Since $m' \in \NNN_{n,k,s}$, we know that $x_i^{kr} \nmid m$ for $1 \leq i \leq n + (k-s)$.  Let $S \subseteq [n + (k-s)]$ 
satisfy $|S| = n+(k-s)$.  We need to show $\xx(S)^r \nmid m$.  
If $S \subseteq [n]$, then $\xx(S)^r \nmid m$ because
$\xx(S)^r \nmid m'$.  On the other hand, if $n + i \in S$ for some $1 \leq i \leq k-r$, 
the power $p_{n+i}$ of $x_{n+i}$
in $\xx(S)^r$ is $\geq r \cdot (s+i)$.    However, our assumptions on $(a_{n+1}, a_{n+2}, \dots, a_{n+(k-s)})$ force
$a_{n+i} < r \cdot (k - (s-i))  \leq r \cdot (s+i)$, which implies $\xx(S)^r \nmid m$.
\end{proof}

We use the map $\Psi$ from Section~\ref{Hilbert} to count $\NNN_{n,k,s}$.

\begin{lemma}
\label{size-of-n}
We have $|\NNN_{n,k,s}| = |\OP_{n,k,s}|$.
\end{lemma}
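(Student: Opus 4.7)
The plan is to construct an explicit bijection $\Psi_s : \OP_{n,k,s} \to \NNN_{n,k,s}$ by restricting the bijection $\Psi$ of Proposition~\ref{psi-is-bijection}. I will view $\OP_{n,k,s}$ as the subset of $\OP_{n+(k-s),k}$ consisting of those ordered set partitions in which the ``big letter'' $n+j$ occupies block $B_{s+j}$ with color $0$ for each $1 \leq j \leq k-s$.

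The core technical claim is that for every $\sigma \in \OP_{n,k,s}$, the monomial $\Psi(\sigma)$ factors as $\Psi(\sigma) = \Psi_s(\sigma) \cdot \prod_{j=1}^{k-s} x_{n+j}^{r(s+j)-1}$ with $\Psi_s(\sigma) \in \CC[\xx_n]$. The plan is to prove this by inducting on the insertion step for each big letter $n+j$. At the moment $n+j$ is inserted, blocks $B_1,\dots,B_{s+j-1}$ are all present in the current face: the first $s$ because they contain small letters (nonempty by definition of $\OP_{n,k,s}$), and the next $j-1$ because $n+1,\dots,n+j-1$ have already been inserted. Hence $B_{s+j}$, whether already present (star case) or newly created (bar case), sits at position $s+j$ of the post-insertion face, and substituting $c=0$ together with the resulting $j' = k_{\mathrm{post}}-(s+j)$ into the exponent formula $r(k_{\mathrm{post}}-j'-1) + (r-c-1)$ yields $r(s+j)-1$ in both cases. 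A further argument shows the set $S$ from Lemma~\ref{skip-monomial-multiply} in the bar case lies entirely in $[n]$: if $n+j' \in S$ occupies position $p$ within $S$, the divisibility $\xx(S)^r \mid \mm(S)^r \cdot m$ forces $r(n+j'-p+1) \leq r(s+j'+1) - 1$, equivalently $p \geq n-s+1$; but $|S| = n-s-U_j < n-s+1$, where $U_j \geq 0$ counts blocks with small letters beyond position $s+j$. This contradiction shows $\mm(S)^r$ never alters big-variable exponents, so the canonical values $r(s+j)-1$ persist through all subsequent insertions.

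By the first half of Lemma~\ref{nonskip-monomial-factor}, the resulting $\Psi_s(\sigma)$ lies in $\NNN_{n,k,s}$, so $\Psi_s$ is well-defined. Injectivity is immediate from the injectivity of $\Psi$ combined with the canonical form of big-variable exponents. For surjectivity, given $m' \in \NNN_{n,k,s}$, the second half of Lemma~\ref{nonskip-monomial-factor} shows that $\widetilde{m} := m' \cdot \prod_{j=1}^{k-s} x_{n+j}^{r(s+j)-1}$ lies in $\NNN_{n+(k-s),k}$. Applying $\Phi = \Psi^{-1}$ (from Proposition~\ref{psi-is-bijection}) gives an element of $\OP_{n+(k-s),k}$, and one verifies $\Phi(\widetilde{m}) \in \OP_{n,k,s}$ by running the inverse recursion at each big variable: the exponent $a_{n+j} = r(s+j)-1$ admits a unique decomposition $a_{n+j} = r(k-j'-1) + (r-c-1)$ with $c \in \{0,\dots,r-1\}$, giving $c=0$ and $j' = k-s-j$, which in turn places $n+j$ in block $B_{s+j}$ with color $0$ as required.

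The main obstacle is the delicate inductive bookkeeping for the core claim, particularly the verification that the set $S$ in bar insertions never contains big-letter indices; this is essential to prevent subsequent insertions from shifting the big-variable exponents away from their canonical values $r(s+j)-1$, which in turn underlies both the well-definedness of $\Psi_s$ and the recoverability of the inverse.
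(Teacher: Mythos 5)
Your proof takes the same route as the paper: view $\OP_{n,k,s}$ as a subset of $\OP_{n+(k-s),k}$, use the bijection $\Psi$ of Proposition~\ref{psi-is-bijection}, observe that big-variable exponents are forced to equal $r(s+j)-1$, and invoke Lemma~\ref{nonskip-monomial-factor}. The paper compresses the central claim ``$\Psi(\OP_{n,k,s}) = \NNN'_{n,k,s}$'' into a sentence ``we leave it for the reader to check,'' while you unpack it; in particular, your argument that in a bar insertion of a big letter $n+j$ the set $S$ from Lemma~\ref{skip-monomial-multiply} lies in $[n]$ (so $\mm(S)^r$ never perturbs the already-fixed big-variable exponents) is a genuine and correct addition: if $n+j' \in S$ at position $p$, divisibility forces $r(n+j'-p+1) \le r(s+j'+1)-1$, hence $p \ge n-s+1$, which exceeds $|S| = n-s-U_j$. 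Your insertion-exponent computation is also right: with $c=0$ and the block at position $s+j$ from the left in the post-insertion face having $k_{\mathrm{post}}-(s+j)$ blocks to its right, the $\Psi$-exponent formula gives $r(s+j-1)+(r-1)=r(s+j)-1$ independently of $k_{\mathrm{post}}$. The surjectivity direction (reading off $c=0$ and the block position from the decomposition $a_{n+j}=j'r+(r-c-1)$ in the inverse recursion) is sketched rather than fully traced, but this is exactly the part the paper also leaves implicit, and your sketch is consistent with how $\Phi$ is defined. In short: same approach, materially more detail on the step the paper delegates to the reader.
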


\begin{proof}
Consider the bijection $\Psi: \OP_{n+(k-s),k} \rightarrow \NNN_{n+(k-s),k}$ from Section~\ref{Hilbert}.
We have $\OP_{n,k,s} \subseteq \OP_{n+(k-s),k}$.  We leave it for the reader to check that
\begin{equation*}
\Psi(\OP_{n,k,s}) = \NNN'_{n,k,s},
\end{equation*}
where $\NNN'_{n,k,s}$ consists of those monomials  
$x_1^{a_1} \cdots x_{n}^{a_n} x_{n+1}^{a_{n+1}} \cdots x_{n+(k-s)}^{a_{n+(k-s)}} \in \NNN_{n+(k-s),k}$ 
which satisfy
\begin{equation*}
(a_{n+1}, a_{n+2}, \dots, a_{n+(k-s)}) = (rs + (r-1), r(s+1) + (r-1), \dots, r(k-1) + (r-1)).
\end{equation*}
(The $+(r-1)$ terms come from the fact that the letters $n+1, \dots, n+(k-s)$ all have color $0$ and
$\Psi$ involves a {\em complementary} color contribution.)
Lemma~\ref{nonskip-monomial-factor} applies to show $|\NNN'_{n,k,s}| = |\NNN_{n,k,s}|$.
\end{proof}

We are ready to determine the ungraded isomorphism type of the $G_n$-module
$S_{n,k,s}$.

\begin{lemma}
\label{s-dimension-lemma-generalized}
We have $S_{n,k,s} \cong \CC[\OP_{n,k,s}]$.  In particular, we have
$\dim(S_{n,k,s}) = |\OP_{n,k,s}|$.
\end{lemma}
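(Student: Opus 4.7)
The plan is to use the standard sandwich argument from the point-orbit technique, exactly as in the proof of Theorem~\ref{m-is-basis}, but now carried out for the generalized quotients $S_{n,k,s}$. All three ingredients needed are already in hand.

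First, I will establish the upper bound $\dim(S_{n,k,s}) \leq |\OP_{n,k,s}|$. By Lemma~\ref{skip-leading-terms}, every monomial in $\CC[\xx_n]$ not belonging to $\NNN_{n,k,s}$ lies in the initial ideal $\initial_<(J_{n,k,s})$ with respect to the lexicographic term order. Consequently the standard monomial basis of $S_{n,k,s}$ is contained in $\NNN_{n,k,s}$, giving
\begin{equation*}
\dim(S_{n,k,s}) \leq |\NNN_{n,k,s}| = |\OP_{n,k,s}|,
\end{equation*}
where the equality is Lemma~\ref{size-of-n}.

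Second, I will establish the matching lower bound $\dim(S_{n,k,s}) \geq |\OP_{n,k,s}|$. By Lemma~\ref{j-contained-in-t-generalized}, we have the containment $J_{n,k,s} \subseteq T_{n,k,s}$. This yields a surjection of $G_n$-modules
\begin{equation*}
S_{n,k,s} = \CC[\xx_n]/J_{n,k,s} \twoheadrightarrow \CC[\xx_n]/T_{n,k,s} \cong \CC[\OP_{n,k,s}],
\end{equation*}
where the last isomorphism was recorded before Lemma~\ref{j-contained-in-t-generalized} via the chain $\CC[\OP_{n,k,s}] \cong \CC[\xx_{n+(k-s)}]/\II(Y_{n,k,s}) \cong \CC[\xx_{n+(k-s)}]/\TT(Y_{n,k,s}) \cong \CC[\xx_n]/T_{n,k,s}$. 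The surjection forces $\dim(S_{n,k,s}) \geq |\OP_{n,k,s}|$.

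Combining the two bounds, $\dim(S_{n,k,s}) = |\OP_{n,k,s}|$, and it follows that the surjection in the previous paragraph is an isomorphism of $G_n$-modules, so $S_{n,k,s} \cong \CC[\OP_{n,k,s}]$. There is no real obstacle here; the work has already been done in Lemmata \ref{j-contained-in-t-generalized}, \ref{skip-leading-terms}, and \ref{size-of-n}, and this proof simply assembles them. As a byproduct we also obtain $J_{n,k,s} = T_{n,k,s}$ and the fact that $\NNN_{n,k,s}$ is precisely the standard monomial basis of $S_{n,k,s}$ for the lexicographic order.
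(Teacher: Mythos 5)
Your proof is correct and follows essentially the same sandwich argument as the paper: the upper bound via Lemma~\ref{skip-leading-terms} and Lemma~\ref{size-of-n}, the lower bound via the $G_n$-module surjection from Lemma~\ref{j-contained-in-t-generalized}, and the conclusion by dimension counting. The byproducts you note ($J_{n,k,s}=T_{n,k,s}$ and $\NNN_{n,k,s}$ being the standard monomial basis) are also correct.
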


\begin{proof}
By Lemma~\ref{j-contained-in-t-generalized} we have $\dim(S_{n,k,s}) \geq |\OP_{n,k,s}|$.
Lemma~\ref{skip-leading-terms} and
Lemma~\ref{size-of-n} imply that the standard monomial basis of $S_{n,k,s}$ with respect to the 
lexicographic term order has size $\leq |\NNN_{n,k,s}| = |\OP_{n,k,s}|$, so that 
$\dim(S_{n,k,s}) = |\OP_{n,k,s}|$.  Lemma~\ref{j-contained-in-t-generalized} gives a 
$G_n$-module surjection $S_{n,k,s} \twoheadrightarrow \CC[\OP_{n,k,s}]$;
dimension counting shows that this surjection is an isomorphism.
\end{proof}

\subsection{Idempotents and $e_j(\xx^{(i^*)})^{\perp}$}
For $1 \leq j \leq n$ and  $1 \leq i \leq r$, 
we want to develop a module-theoretic analog of acting by the operator 
$e_j(\xx^{(i^*)})^{\perp}$ on Frobenius images.
If $V$ is a $G_n$-module, acting by $e_j(\xx^{(i^*)})^{\perp}$ on 
$\Frob(V)$ will correspond to taking the image of $V$ under a certain group algebra
idempotent $\epsilon_{i,j} \in \CC[G_n]$.

Let $1 \leq j \leq n$ and consider the corresponding parabolic subgroup 
$G_{(n-j,j)} = G_{n-j} \times G_j$ of $G_n$.  
The factor $G_j$ acts on the {\em last} $j$ letters $n-j+1, \dots, n-1, n$ of $\{1, 2, \dots, n\}$.

For $1 \leq j \leq n$ and $1 \leq i \leq r$,
let $\epsilon_{i,j}$ be the idempotent in the group algebra of $G_n$ given by
\begin{equation}
\epsilon_{i,j} := \frac{1}{r^j \cdot j!}  
\sum_{g \in \ZZ_r \wr \symm_j} \sign(g) \cdot \overline{\chi(g)^i} \cdot g  \in \CC[G_n].
\end{equation}
(Recall that $\chi(g)$ is the product of the nonzero entries in the $j \times j$ monomial matrix $g$.)
The idempotent $\epsilon_{i,j}$ commutes with the action of $G_{n-j}$.  In particular,
if $V$ is a $G_n$-module, then $\epsilon_{i,j} V$ is a 
$G_{n-j}$-module.
The relationship between $\Frob(V)$ and $\Frob(\epsilon_{i,j}V)$ is as follows.

\begin{lemma}
\label{e-perp-on-v}
Let $V$ be a $G_n$-module, let $1 \leq j \leq n$, and let $1 \leq i \leq r$.
We have
\begin{equation}
\Frob(\epsilon_{i,j} V) = e_j(\xx^{(i^*)})^{\perp} \Frob(V).
\end{equation}
In particular, if $V$ is graded, we have
\begin{equation}
\grFrob(\epsilon_{i,j} V; q) = e_j(\xx^{(i^*)})^{\perp} \grFrob(V; q).
\end{equation}
\end{lemma}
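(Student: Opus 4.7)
The plan is to reduce to $V$ irreducible by linearity of both sides, and then match the multiplicity of each irreducible $\bm{S^{\nu}}$ of $G_{n-j}$ on the left and right. The first observation is that $\epsilon_{i,j}$ is precisely the central idempotent of $\CC[G_j]$ projecting onto the $(\sign \cdot \chi^i)$-isotypic component: the projector onto any one-dimensional character $\psi$ of $G_j$ is $\frac{1}{|G_j|}\sum_{g \in G_j} \overline{\psi(g)}\, g$, and setting $\psi = \sign \cdot \chi^i$ recovers $\epsilon_{i,j}$. By Specht's classification (as in the example worked out earlier), $\sign \cdot \chi^i$ corresponds to the $r$-partition $\bm{\mu_0}$ with $(1^j)$ in position $i$ and $\varnothing$ elsewhere, so $\bm{s_{\mu_0}}(\xx) = e_j(\xx^{(i)})$.

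With that in hand, I would identify $\epsilon_{i,j} V$ with the $G_{n-j}$-module $\Hom_{G_j}(\bm{S^{\mu_0}}, \Res^{G_n}_{G_{(n-j,j)}} V)$, so that for each $\bm{\nu} \vdash_r (n-j)$ the multiplicity of $\bm{S^{\nu}}$ in $\epsilon_{i,j} V$ equals the multiplicity of $\bm{S^{\nu}} \otimes \bm{S^{\mu_0}}$ in $\Res V$ as a $G_{(n-j,j)}$-module. Frobenius reciprocity converts this to the multiplicity of $\Ind_{G_{(n-j,j)}}^{G_n}(\bm{S^{\nu}} \otimes \bm{S^{\mu_0}})$ in $V$, and the compatibility of Frobenius character with induction recorded in the Background identifies the Frobenius image of that induced module as $\bm{s_\nu}(\xx) \cdot e_j(\xx^{(i)})$. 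Consequently, the multiplicity of $\bm{S^{\nu}}$ in $\epsilon_{i,j} V$ equals $\sum_{\blambda} m_{\blambda}(V) \cdot [\bm{s_\lambda}(\xx)]\bigl(\bm{s_\nu}(\xx) \cdot e_j(\xx^{(i)})\bigr)$.

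On the other side, the coefficient of $\bm{s_\nu}(\xx)$ in $e_j(\xx^{(i^*)})^{\perp}\Frob(V)$ is extracted by pairing against $\bm{s_{\nu^*}}(\xx)$; by adjointness this equals $\langle \Frob(V),\, e_j(\xx^{(i^*)}) \bm{s_{\nu^*}}(\xx)\rangle$. To reconcile this with the preceding paragraph, I will invoke the algebra involution $\mathcal{D}\colon \bm{s_\mu}(\xx) \mapsto \bm{s_{\mu^*}}(\xx)$ on $\Lambda^r(\xx)$, which simply permutes the variable blocks $\xx^{(1)}, \dots, \xx^{(r)}$ via $p \mapsto p^*$ and hence sends $e_j(\xx^{(i)})$ to $e_j(\xx^{(i^*)})$ and $\bm{s_\nu}(\xx)$ to $\bm{s_{\nu^*}}(\xx)$. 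Applying $\mathcal{D}$ to the Schur expansion of $\bm{s_\nu}(\xx) \cdot e_j(\xx^{(i)})$ identifies each coefficient $[\bm{s_\lambda}]\bigl(\bm{s_\nu}(\xx) \cdot e_j(\xx^{(i)})\bigr)$ with $[\bm{s_{\lambda^*}}]\bigl(\bm{s_{\nu^*}}(\xx) \cdot e_j(\xx^{(i^*)})\bigr)$, which is precisely what the defining pairing $\langle \bm{s_\lambda}, \bm{s_{\mu^*}}\rangle = \delta_{\blambda,\bm{\mu}}$ reads off the inner product above.

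The main subtlety is tracking the duality $\bm{\mu} \mapsto \bm{\mu^*}$: this is the mechanism encoding complex conjugation on $G_n$ (nontrivial for $r \geq 3$) into the Hall inner product, and it is exactly what forces the adjoint operator to involve $i^*$ rather than $i$. The graded refinement is then immediate by applying the ungraded statement to each homogeneous piece $V_d$ separately.
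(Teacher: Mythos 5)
Your argument is correct and follows essentially the same route as the paper's proof: reduce to $V$ irreducible, recognize $\epsilon_{i,j}$ as the projector onto the isotypic component of the linear character $\sign\cdot\chi^i$ of $G_j$ (whose $r$-partition has $(1^j)$ in position $i$, so its Frobenius image is $e_j(\xx^{(i)})$), apply Frobenius reciprocity together with the compatibility of $\Frob$ with induction product, and then account for the $\bm{\mu}\mapsto\bm{\mu^*}$ twist built into the Hall inner product to convert $e_j(\xx^{(i)})$ into the adjoint operator $e_j(\xx^{(i^*)})^\perp$. The only cosmetic difference is that you phrase $\epsilon_{i,j}V$ as $\Hom_{G_j}(\bm{S^{\mu_0}},\Res V)$ and make the block-permuting involution $\mathcal{D}$ explicit, whereas the paper decomposes $\Res^{G_n}_{G_{(n-j,j)}}\bm{S^{\lambda}}$ into Littlewood-Richardson pieces and then applies $\epsilon_{i,j}$; these are the same computation.
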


\begin{proof}
The proof is a standard application of Frobenius reciprocity
and symmetric function theory (and can be found in \cite{GP} in the case $r = 1$).

It suffices to prove this lemma when $V$ is irreducible, so let $V = \bm{S^{\lambda}}$ for some $r$-partition
$\blambda \vdash_r n$.  Consider the parabolic
subgroup $G_{(n-j,j)} \subseteq G_n$.  Irreducible representations 
of $G_{(n-j,j)}$ have the form $\bm{S^{\mu}} \otimes \bm{S^{\nu}}$
for $\bm{\mu} \vdash_r n-j$  and $\bm{\nu} \vdash_r j$.  By Frobenius reciprocity, 
we have
\begin{align*}
\text{(multiplicity of $\bm{S^{\mu}} \otimes \bm{S^{\nu}}$ in 
$\mathrm{Res}^{G_n}_{G_{(n-j,j)}} \bm{S^{\lambda}}$)} &=
\text{(multiplicity of $\bm{S^{\lambda}}$ in 
$\mathrm{Ind}^{G_n}_{G_{(n-j,j)}} \bm{S^{\mu}} \otimes \bm{S^{\nu}}$)} \\
&= 
\text{(coefficient of $\bm{s_{\lambda}(x)}$ in $\bm{s_{\mu}(x)} \cdot \bm{s_{\nu}(x)}$)}.
\end{align*}
The coefficient of $\bm{s_{\lambda}(x)}$ in the Schur expansion of $\bm{s_{\mu}(x)} \cdot \bm{s_{\nu}(x)}$ is
\begin{equation*}
\bm{c^{\lambda}_{\mu,\nu}} := c_{\mu^{(1)}, \nu^{(1)}}^{\lambda^{(1)}} \cdots c_{\mu^{(r)}, \nu^{(r)}}^{\lambda^{(r)}},
\end{equation*}
where the numbers $c_{\mu^{(1)}, \nu^{(1)}}^{\lambda^{(1)}}, \dots, c_{\mu^{(r)}, \nu^{(r)}}^{\lambda^{(r)}}$ are 
Littlewood-Richardson coefficients.

By the last paragraph, we have the isomorphism of $G_{(n-j,j)}$-modules
\begin{equation}
\mathrm{Res}^{G_n}_{G_{(n-j,j)}} \bm{S^{\lambda}} \cong
\bigoplus_{\substack{ \bm{\mu} \vdash_r n-j \\ \bm{\nu} \vdash_r j}} 
\bm{c_{\mu,\nu}^{\lambda}} (\bm{S^{\mu}} \otimes \bm{S^{\nu}}),
\end{equation}
which implies the isomorphism of $G_{n-j}$-modules
\begin{equation}
\epsilon_{i,j} \bm{S^{\lambda}} \cong 
\bigoplus_{\substack{ \bm{\mu} \vdash_r n-j \\ \bm{\nu} \vdash_r j}} 
 \bm{c_{\mu,\nu}^{\lambda}} (\bm{S^{\mu}} \otimes \epsilon_{i,j} \bm{S^{\nu}}).
\end{equation}
However, since the idempotent $\epsilon_{i,j}$ projects onto the 
$\bm{\nu_0} := (\varnothing, \dots, (1^j), \dots, \varnothing)$-isotypic component of any 
$G_j$-module (where the nonempty
partition is in position $i$), we have
\begin{equation}
\epsilon_{i,j} \bm{S^{\nu}} = \begin{cases}
\bm{S^{\nu_0}} & \bm{\nu} = \bm{\nu_0} \\
0 & \bm{\nu} \neq \bm{\nu_0}.
\end{cases}
\end{equation}
Since  $\bm{S^{\nu_0}}$ is 1-dimensional, we deduce
\begin{equation}
\epsilon_{i,j} \bm{S^{\lambda}} \cong 
\bigoplus_{\bm{\mu} \vdash_r n-j} 
\bm{c_{\mu, \nu_0}^{\lambda}}  \bm{S^{\mu}},
\end{equation}
or 
\begin{equation}
\Frob(\epsilon_{i,j} \bm{S^{\lambda}}) = \sum_{\bm{\mu} \vdash_r n-j} 
\bm{c_{\mu, \nu_0}^{\lambda}} \bm{s_{\mu}}(\xx).
\end{equation}
To complete the proof,  observe that $\Frob(S^{\bm{\nu_0}}) = e_j(\xx^{(i)})$ and
apply the definition of adjoint operators (together with the dualizing operation $i \mapsto i^*$ 
in the relevant inner product $\langle \cdot, \cdot \rangle$).
\end{proof}

We will need to consider the action of the idempotent $\epsilon_{i,j}$ on polynomials in $\CC[\xx_n]$.
Our basic tool is 
the following lemma describing the action of $\epsilon_{i,j}$ on monomials in the variables 
$x_{n-j+1}, \dots, x_n$.

\begin{lemma}
\label{last-variable-lemma}
Let $(a_{n-j+1}, \dots, a_n)$ be a length $j$ sequence of nonnegative integers and consider the corresponding 
monomial $x_{n-j+1}^{a_{n-j+1}} \cdots x_n^{a_n}$.  Unless the numbers $a_{n-j+1}, \dots, a_n$ are distinct 
and all congruent to $-i$ modulo $r$, we have
\begin{equation}
\epsilon_{i,j} \cdot (x_{n-j+1}^{a_{n-j+1}} \cdots x_n^{a_n}) = 0.
\end{equation}
Furthermore, if $(a'_{n-j+1}, \dots, a'_n)$ is a rearrangement of $(a_{n-j+1}, \dots, a_n)$, we have
\begin{equation}
\epsilon_{i,j} \cdot (x_{n-j+1}^{a_{n-j+1}} \cdots x_n^{a_n}) = 
\pm \epsilon_{i,j} \cdot (x_{n-j+1}^{a'_{n-j+1}} \cdots x_n^{a'_n}).
\end{equation}
\end{lemma}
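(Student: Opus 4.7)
The plan is to exploit the semidirect product structure $G_j = \ZZ_r^j \rtimes \symm_j$ in order to factor $\epsilon_{i,j}$ into two pieces, and then analyze each piece separately on the monomial $m := x_{n-j+1}^{a_{n-j+1}} \cdots x_n^{a_n}$.

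First I would observe that every $g \in G_j$ has a unique factorization $g = \pi t$ with $\pi$ a permutation matrix in $\symm_j$ and $t$ a diagonal element of $\ZZ_r^j$. Under this factorization $\sign(g) = \sign(\pi)$ and $\chi(g) = \chi(t)$, so the group-algebra element factors as $\epsilon_{i,j} = A \cdot P$, where
\begin{equation*}
A := \frac{1}{j!}\sum_{\pi \in \symm_j}\sign(\pi)\,\pi, \qquad P := \frac{1}{r^j}\sum_{t \in \ZZ_r^j}\overline{\chi(t)^i}\,t.
\end{equation*}
Therefore $\epsilon_{i,j} \cdot m = A \cdot (P \cdot m)$, and it suffices to analyze $P$ and $A$ in turn.

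Next I would compute $P \cdot m$ explicitly. Parameterizing $t = \mathrm{diag}(\zeta^{c_{n-j+1}}, \dots, \zeta^{c_n})$ and using the standard convention $(g \cdot f)(v) = f(g^{-1}v)$, so that $t \cdot x_\ell = \zeta^{-c_\ell}x_\ell$, the action on $m$ factors as
\begin{equation*}
P \cdot m = \prod_{\ell = n-j+1}^{n} \left( \frac{1}{r}\sum_{c = 0}^{r-1} \zeta^{-c(a_\ell + i)} \right) \cdot m.
\end{equation*}
By the geometric series identity, each factor equals $1$ if $a_\ell \equiv -i \pmod{r}$ and vanishes otherwise. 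Hence $P \cdot m = m$ when every exponent satisfies the congruence, and $P \cdot m = 0$ otherwise, which already gives one half of the vanishing statement.

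Finally I would apply the antisymmetrizer $A$, which acts only on the variables $x_{n-j+1}, \dots, x_n$ by permutation. Standard arguments show that $A$ kills any monomial with two equal exponents (a transposition swapping the corresponding variables fixes the monomial but flips the sign), and satisfies $A \cdot (x_{n-j+1}^{a'_{n-j+1}} \cdots x_n^{a'_n}) = \sign(\sigma) \cdot A \cdot m$ whenever $(a'_{n-j+1}, \dots, a'_n)$ arises from $(a_{n-j+1}, \dots, a_n)$ by a permutation $\sigma \in \symm_j$. Combining these two facts with the computation of $P$ yields both assertions of the lemma: the vanishing criterion and the rearrangement rule. The only delicate point will be pinning down the sign $-i$ (rather than $+i$) in the congruence, which is forced by the chosen convention for the $G_n$-action on $\CC[\xx_n]$; I expect this bookkeeping to be the only real obstacle, and it is resolved purely by unwinding definitions.
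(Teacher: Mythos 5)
Your proof is correct and fills in exactly the computation the paper dismisses as ``routine.'' The factorization $\epsilon_{i,j} = A\cdot P$ via the semidirect product structure, the geometric-series evaluation of $P\cdot m$, and the antisymmetrizer argument for $A$ are all sound and consistent with the paper's stated action conventions ($g.x_\ell = g_\ell^{-1}x_\ell$ for diagonal $g$, $\sign$ from the underlying permutation, $\chi$ the product of nonzero entries), so this is the same approach, just written out in full.
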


\begin{proof}
Recall that $G_n$ acts on $\CC[\xx_n]$ by linear substitutions.  
In particular, if $1 \leq \ell \leq n$ and $\pi \in \symm_n \subseteq G_n$, we have
$\pi.x_{\ell} = x_{\pi_{\ell}}$.  Moreover, if $g = \mathrm{diag}(g_1, \dots, g_n) \in G_n$ 
is a diagonal matrix, we have
$g.x_{\ell} = g_{\ell}^{-1} x_{\ell}$.  Using these rules, the lemma is  a routine computation.
\end{proof}

The group $G_j$ acts on the quotient ring 
$V_{n,k,j} := \CC[x_{n-j+1}, \dots, x_n] / \langle x_{n-j+1}^{kr}, \dots, x_n^{kr} \rangle$.  For any $1 \leq i \leq r$, let 
$\epsilon_{i,j} V_{n,k,j}$ be the image of $V_{n,k,j}$ under $\epsilon_{i,j}$.  Then 
$\epsilon_{i,j} V_{n,k,j}$ is a graded vector space on which the idempotent
$\epsilon_{i,j}$ acts as the identity operator.
As a consequence of Lemma~\ref{last-variable-lemma}, the set of polynomials
\begin{equation}
\{ \epsilon_{i,j} \cdot (x_{n-j+1}^{a_{n-j+1}} \cdots x_n^{a_n}) \,:\, 
0 \leq a_{n-j+1} < \cdots < a_n < kr,   \text{ $a_{\ell} \equiv -i$ (mod $r$) for all $\ell$} \}
\end{equation}
descends to a basis for $\epsilon_{i,j} V_{n,k,j}$. 
Counting the degrees of the monomials appearing in the above set,
we have the Hilbert series
\begin{equation}
\Hilb(\epsilon_{i,j} V_{n,k,j}; q) = q^{j \cdot (r-i) + r \cdot {j \choose 2}}  {k \brack j}_{q^r}.
\end{equation}
The following generalization of \cite[Lem. 6.8]{HRS} uses the spaces 
$\epsilon_{i,j} V_{n,k,j}$ to relate the modules $\epsilon_{i,j} S_{n,k}$ and
$S_{n-j,k,k-j}$.

\begin{lemma}
\label{tensor-isomorphism}
As graded $G_j$-modules we have 
$\epsilon_{i,j} S_{n,k} \cong S_{n-j,k,k-j} \otimes \epsilon_{i,j} V_{n,k,j}$.
\end{lemma}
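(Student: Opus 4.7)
The plan is to construct an explicit multiplication map
\[\varphi\colon S_{n-j,k,k-j} \otimes \epsilon_{i,j}V_{n,k,j} \longrightarrow \epsilon_{i,j}S_{n,k}\]
defined by $\varphi\bigl((f + J_{n-j,k,k-j}) \otimes (\epsilon_{i,j}(g) + \langle x_\ell^{kr} : \ell > n-j\rangle)\bigr) = \epsilon_{i,j}(fg) + J_{n,k}$, where $f \in \CC[x_1,\dots,x_{n-j}]$ and $g \in \CC[x_{n-j+1},\dots,x_n]$. Because $f$ involves only the first $n-j$ variables it commutes with $\epsilon_{i,j}$, so $\epsilon_{i,j}(fg) = f \cdot \epsilon_{i,j}(g)$, which shows $\varphi$ is graded and $G_{n-j}$-equivariant. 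The goal is to prove $\varphi$ is an isomorphism via well-definedness, surjectivity, and a dimension count.

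For well-definedness, each generator of the ideals defining the two tensor factors must map into $J_{n,k}$. The variable powers $x_\ell^{kr}$ for every $\ell$ clearly belong to $J_{n,k}$. The interesting case is the generators $e_m(\xx_{n-j}^r)$ of $J_{n-j,k,k-j}$ for $n-k+1 \leq m \leq n-j$; for these I would use the multiplicative expansion
\[e_m(\xx_n^r) \;=\; \sum_{a+b=m} e_a(\xx_{n-j}^r) \cdot e_b(x_{n-j+1}^r, \dots, x_n^r),\]
together with the facts that $e_m(\xx_n^r) \in J_{n,k}$ for $m \geq n-k+1$ and that each $e_b(x_{n-j+1}^r,\dots,x_n^r)$ is $G_j$-invariant (and so commutes with $\epsilon_{i,j}$). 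Rearranging gives, modulo $J_{n,k}$, the congruence $e_m(\xx_{n-j}^r)\cdot \epsilon_{i,j}(g) \equiv -\sum_{b=1}^{\min(m,j)} e_{m-b}(\xx_{n-j}^r) \cdot \epsilon_{i,j}\bigl(e_b(x_{n-j+1}^r,\dots,x_n^r)\, g\bigr)$, and an induction on $m$ closes the loop. This induction is the main technical obstacle, since some subscripts $m-b$ drop below $n-k+1$ and must be handled by separate arguments exploiting the bound $e_b = 0$ for $b > j$ and the previously established ideal relations.

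Surjectivity is then nearly immediate: any representative $\epsilon_{i,j}(h) \in \epsilon_{i,j}S_{n,k}$ expands, using the tensor factorization $\CC[\xx_n] = \CC[x_1,\dots,x_{n-j}] \otimes \CC[x_{n-j+1},\dots,x_n]$, as a sum of terms of the form $m_1 \cdot \epsilon_{i,j}(m_2)$, each lying in the image of $\varphi$. Finally I would match dimensions: Lemma~\ref{last-variable-lemma} gives $\dim \epsilon_{i,j}V_{n,k,j} = \binom{k}{j}$, since the surviving exponent sequences are strictly increasing tuples $0 \leq a_1 < \cdots < a_j < kr$ with each $a_\ell \equiv -i \pmod r$; Lemma~\ref{s-dimension-lemma-generalized} gives $\dim S_{n-j,k,k-j} = \lvert \OP_{n-j,k,k-j} \rvert$. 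To show $\dim \epsilon_{i,j}S_{n,k}$ equals the product, I would exhibit a linearly independent subset of that size inside $\epsilon_{i,j}S_{n,k}$ by combining the standard monomial basis $\NNN_{n-j,k,k-j}$ of $S_{n-j,k,k-j}$ (the $s=k-j$ analog of Theorem~\ref{m-is-basis}) with the $\binom{k}{j}$ pre-basis elements of $\epsilon_{i,j}V_{n,k,j}$, mapping them to $\epsilon_{i,j}S_{n,k}$ via $\varphi$, and checking linear independence by reading off leading monomials under lexicographic order inside the standard monomial basis $\NNN_{n,k}$ of $S_{n,k}$.
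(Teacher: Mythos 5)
Your overall strategy — define a multiplication map $\varphi$, prove well-definedness on generators, establish surjectivity, and finish by a dimension count — is exactly the skeleton the paper uses (its map is called $\widetilde{\mu}$, descending to $\mu$).  The surjectivity and dimension-count portions of your sketch (reading off leading monomials against $\NNN_{n,k}$, using Lemma~\ref{last-variable-lemma} to count $\dim\epsilon_{i,j}V_{n,k,j}=\binom{k}{j}$ and Lemma~\ref{s-dimension-lemma-generalized} for $\dim S_{n-j,k,k-j}$) are in the spirit of what the paper does, via the set $\CCC_{n,k,j}$ of $\epsilon_{i,j}$-images of certain strongly nonskip monomials.

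The gap you flag in the well-definedness step is not merely a ``technical obstacle'' that an induction can close; the claim you need is false.  You need $e_m(\xx_{n-j}^r)\cdot\epsilon_{i,j}(g)\in J_{n,k}$ for every $g$ representing an element of $\epsilon_{i,j}V_{n,k,j}$ and every $m$ in the range $n-k+1\le m\le n-j$.  Take $(n,k,r,j,i)=(3,2,2,1,1)$, $m=2$, and $g=x_3$ (which spans a summand of $\epsilon_{1,1}V_{3,2,1}$, since $1\equiv -i\pmod r$).  Then $e_2(x_1^2,x_2^2)\cdot x_3=x_1^2x_2^2x_3$, and dividing by the Gr\"obner basis of $J_{3,2}$ (leading terms $x_1^4,x_2^4,x_3^4,x_1^2x_2^2$) gives the nonzero normal form $-x_1^2x_3^3-x_2^2x_3^3$.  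So $x_1^2x_2^2x_3\notin J_{3,2}$, and consequently $\varphi$ as you define it does not kill $J_{n-j,k,k-j}\otimes\epsilon_{i,j}V_{n,k,j}$; the induction on $m$ cannot be made to close because the statement being proved is false for the base value $m=n-k+1$.  Be aware that the paper's own one-line justification of this step — the displayed identity $\epsilon_{i,j}\bigl(e_d(\yy_{n-j}^r,\zz_j^r)\bigr)=e_d(\yy_{n-j}^r)$ — runs into the same problem: since each $e_b(\zz_j^r)$ (including $e_0=1$) is $G_j$-invariant and $\epsilon_{i,j}$ projects onto a nontrivial one-dimensional $G_j$-isotype, all the terms $\epsilon_{i,j}(e_b(\zz_j^r))$ vanish, so the left side is $0$ and not $e_d(\yy_{n-j}^r)$.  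A correct proof therefore cannot proceed by simply showing the naive multiplication map descends; it needs a different construction of the isomorphism (for example, one built directly from the basis bijection between $\CCC_{n,k,j}$ and $\NNN_{n-j,k,k-j}\times\{$admissible $\zz$-exponent tuples$\}$ of Lemma~\ref{nonskip-monomial-factor}, together with a separate argument for $G_{n-j}$-equivariance), and your sketch does not supply one.
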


\begin{proof}
Write $\yy_{n-j} = (y_1, \dots, y_{n-j}) = (x_1, \dots, x_{n-j})$ and 
$\zz_j = (z_1, \dots, z_j) = (x_{n-j+1}, \dots, x_n)$, so that 
$\CC[\xx_n] = \CC[\yy_{n-j}, \zz_j]$.
The operator $\epsilon_{i,j} \in \CC[G_j]$ acts on the $\zz$ variables and commutes with the $\yy$ variables.

There is a natural multiplication map
\begin{equation}
\widetilde{\mu}: \CC[\yy_{n-j}] \otimes \epsilon_{i,j} V_{n,k,j} \rightarrow \epsilon_{i,j} \CC[\xx_n] / \epsilon_{i,j} J_{n,k} 
\cong \epsilon_{i,j} S_{n,k}
\end{equation}
coming from the assignment $f(\yy_{n-j}) \otimes g(\zz_j) \mapsto f(\yy_{n-j}) g(\zz_j)$.  
The map $\widetilde{\mu}$ commutes with the action of $G_{n-j}$ on the $\yy$ variables.
We show that $\widetilde{\mu}$ descends to the desired isomorphism.

We calculate
\begin{equation}
\epsilon_{i,j}(e_d(\yy_{n-j}^r, \zz_j^r)) = \sum_{a + b = d} e_a(\yy_{n-j}^r) \epsilon_{i,j}(e_b(\zz_j^r)) = 
e_d(\yy_{n-j}^r)
\end{equation}
for any $d > 0$.  It follows that $e_d(\yy_{n-j}^r) \in \epsilon_{i,j} J_{n,k}$ for all $d > n-k$.
For any $f(\zz_j) \in \epsilon_{i,j} V_{n,k,j}$ we have
\begin{equation}
\widetilde{\mu}(y_{\ell}^{kr} \otimes f(\zz_j)) = y_{\ell}^{kr} f(\zz_j) 
= y_{\ell}^{kr} \epsilon_{i,j} (f(\zz_j)) = \epsilon_{i,j} (y_{\ell}^{kr} f(\zz_j)) \in \epsilon_{i,j} J_{n,k},
\end{equation}
where we used the fact that $\epsilon_{i,j}$ acts as the identity operator on 
$\epsilon_{i,j} V_{n,k,j}$.

By the last paragraph, we have $J_{n-j,k,k-j} \otimes \epsilon_{i,j} V_{n,k,j} \subseteq \mathrm{Ker}(\widetilde{\mu})$.
The map $\widetilde{\mu}$ therefore induces a map 
\begin{equation}
\mu: S_{n-j,k,k-j} \otimes \epsilon_{i,j} V_{n,k,j} \rightarrow \epsilon_{i,j} \CC[\xx_n]/\epsilon_{i,j} J_{n,k} 
\cong \epsilon_{i,j} S_{n,k}.
\end{equation}

To determine the dimension of the target of $\mu$, consider the action of $\epsilon_{i,j}$ 
on $\CC[\OP_{n,k}]$.  Given $\sigma \in \OP_{n,k}$, we have $\epsilon_{i,j}.\sigma = 0$
if and only if two of the big letters $n-j+1, \dots, n-1, n$ lie in the same block of $\sigma$.  
Moreover, if $\sigma'$ is obtained from $\sigma$ by rearranging the letters 
$n-j+1, \dots, n-1, n$ and/or changing their colors, then $\epsilon_{i,j}.\sigma'$ is a scalar multiple
of $\epsilon_{i,j}.\sigma$. 
By Theorem~\ref{ungraded-isomorphism-type},
the dimension of the target of $\mu$ is
\begin{equation}
\label{mu-dimension}
\dim(\epsilon_{i,j} S_{n,k}) = \dim(\epsilon_{i,j} \CC[\OP_{n,k}]) = {k \choose j} \cdot |\OP_{n-j,k,k-j}|,
\end{equation}
where the binomial coefficient ${k \choose j}$ comes from deciding which of the $k$ blocks of $\sigma$
receive the $j$ big letters.
On the other hand, Lemma~\ref{s-dimension-lemma-generalized} and 
the discussion after Lemma~\ref{last-variable-lemma} imply that the domain of $\mu$ 
also has dimension given by (\ref{mu-dimension}).  
To prove that $\mu$ gives the desired isomorphism, it is therefore enough to show that $\mu$ 
is surjective.

To see that $\mu$ is surjective, let $\CCC_{n,k,j}$ be the set 
of polynomials of the form $\epsilon_{i,j} m(\xx_n)$, where
$m(\xx_n) = m(\yy_{n-j}) \cdot m(\zz_j) \in \NNN_{n,k}$ has the property that 
$m(\zz_j) = z_1^{a_1} \cdots z_j^{a_j}$ with $a_1 < \cdots < a_j$ and 
$a_{\ell} \equiv -i$ (mod $r$) for all $\ell$.  We claim that $\CCC_{n,k,j}$ descends to a basis of 
$\epsilon_{i,j} S_{n,k}$.

Since $\NNN_{n,k}^r$ is a basis of $S_{n,k}$, the set $\{ \epsilon_{i,j} m(\xx_n) \,:\, m(\xx_n) \in \NNN_{n,k} \}$
spans $\epsilon_{i,j} S_{n,k}$.
Let $m(\xx_n) = m(\yy_{n-j}) \cdot m(\zz_j) \in \NNN_{n,k}$.  
By Lemma~\ref{last-variable-lemma}, we have $\epsilon_{i,j} m(\xx_n) = 0$ unless 
$m(\zz_j) = z_1^{a_1} \cdots z_j^{a_j}$ with $(a_1, \dots, a_j)$ distinct and 
$a_{\ell} \equiv -i$ (mod $r$) for all $\ell$.
Also, if $m(\zz_j)' = z_1^{a_1'} \cdots z_j^{a_j'}$ for any permutation $(a_1', \dots, a_j')$
of $(a_1, \dots, a_j)$, then $\epsilon_{i,j} m(\xx_n) = \pm \epsilon_{i,j} m(\yy_{n-j}) \cdot m(\zz_j)'$.
It follows that $\CCC_{n,k,j}$ descends to a spanning set of $\epsilon_{i,j} S_{n,k}$.

Lemmas~\ref{nonskip-monomial-factor}, \ref{size-of-n}, and 
\ref{s-dimension-lemma-generalized} imply
\begin{equation}
|\CCC_{n,k,j}| = {k \choose j} \cdot |\OP_{n-j,k,k-j}| = \dim(\epsilon_{i,j} S_{n,k}).
\end{equation}
It follows that $\CCC_{n,k,j}$ descends to a basis of $\epsilon_{i,j} S_{n,k}$.

Consider a typical element $\epsilon_{i,j} m(\xx_n) = m(\yy_{n-j}) \cdot \epsilon_{i,j} m(\zz_j) \in \CCC_{n,k,j}$.
We have
\begin{equation}
\mu(m(\yy_{n-j}) \otimes \epsilon_{i,j} m(\zz_j)) = m(\yy_{n-j}) \cdot \epsilon_{i,j} m(\zz_j) = \epsilon_{i,j} m(\xx_n),
\end{equation}
so that $\epsilon_{i,j} m(\xx_n)$ lies in the image of $\mu$.  It follows that $\mu$ is surjective.
\end{proof}

By Lemma~\ref{tensor-isomorphism}, we have 
\begin{align}
e_j(\xx^{(i^*)})^{\perp} \grFrob(S_{n,k}; q) &= 
\Hilb(\epsilon_{i,j} V_{n,k,j}^r; q) \cdot \grFrob(S_{n-j,k,k-r}^r; q) \\ 
&=  q^{j \cdot (r-i) + r \cdot {j \choose 2}} {k \brack j}_{q^r} \cdot \grFrob(S_{n-j,k,k-r}^r; q).
\end{align}
It we want $\grFrob(S_{n,k}; q)$ to satisfy the same recursion that $\bm{D_{n,k}}(\xx;q)$ satisfies
from Lemma~\ref{d-under-e-perp}, our goal is therefore
\begin{lemma}
\label{target-lemma}
\begin{equation}
\label{target-equation}
\grFrob(S_{n-j,k,k-j};q) = \sum_{m = \max(1,k-j)}^{\min(k,n-j)}
q^{r \cdot (k-m) \cdot (n-j-m)} {j \brack k-m}_{q^r} \grFrob(S_{n-j,m}; q).
\end{equation}
\end{lemma}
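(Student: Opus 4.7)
The plan is to induct on $j$, writing $N = n - j$ throughout. The base case $j = 0$ is immediate since $S_{N,k,k} = S_{N,k}$ by definition and the right-hand side of the lemma collapses to the single term $m = k$.

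For the inductive step, I would exploit the short exact sequence of graded $G_N$-modules
\begin{equation*}
0 \to K_j \to S_{N,k,k-j} \to S_{N,k,k-j+1} \to 0,
\end{equation*}
where $K_j$ is the principal ideal in $S_{N,k,k-j}$ generated by $e_{N-k+j}(\xx^r)$---the extra generator distinguishing $J_{N,k,k-j+1}$ from $J_{N,k,k-j}$. The core claim is that multiplication by $e_{N-k+j}(\xx^r)$ induces a graded $G_N$-equivariant isomorphism
\begin{equation*}
S_{N,k-1,k-j}[-r(N-k+j)] \xrightarrow{\sim} K_j.
\end{equation*}
Granted this, the SES yields $\grFrob(S_{N,k,k-j};q) = \grFrob(S_{N,k,k-j+1};q) + q^{r(N-k+j)} \grFrob(S_{N,k-1,k-j};q)$; applying the inductive hypothesis to both summands on the right (both involving smaller $j$) and combining via the $q$-Pascal identity $\binom{j}{k-m}_{q^r} = \binom{j-1}{k-m}_{q^r} + q^{r(j-k+m)} \binom{j-1}{k-m-1}_{q^r}$ yields the desired formula.

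To establish the isomorphism, I would first show that multiplication by $e_{N-k+j}(\xx^r)$ factors through $S_{N,k-1,k-j}$. Since the elementary generators of $J_{N,k-1,k-j}$ and $J_{N,k,k-j}$ coincide, only the extra generators $x_l^{(k-1)r}$ require checking, reducing this step to the algebraic identity $x_l^{(k-1)r} \cdot e_{N-k+j}(\xx^r) \in J_{N,k,k-j}$ for each $l \in [N]$. I would prove this by first writing
\begin{equation*}
x_l^{(k-1)r} e_{N-k+j}(\xx^r) = x_l^{kr} e_{N-k+j-1}(\xx_{[N]\setminus\{l\}}^r) + x_l^{(k-1)r} e_{N-k+j}(\xx_{[N]\setminus\{l\}}^r),
\end{equation*}
whose first summand lies in $J_{N,k,k-j}$ via the factor $x_l^{kr}$, and then iterating the elementary recursion $x_l^r \cdot e_d(\xx_{[N]\setminus\{l\}}^r) = e_{d+1}(\xx^r) - e_{d+1}(\xx_{[N]\setminus\{l\}}^r)$ exactly $k-1$ times on the second summand. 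This expresses the second summand as $\sum_{a=0}^{k-2} (-1)^a x_l^{(k-2-a)r} e_{N-k+j+a+1}(\xx^r) + (-1)^{k-1} e_{N+j-1}(\xx_{[N]\setminus\{l\}}^r)$; the sum lies in $J_{N,k,k-j}$ because every index $N-k+j+a+1$ satisfies $N-k+j+1 \leq N-k+j+a+1 \leq N$, while the trailing term vanishes since $N+j-1 > N-1$ forces $e_{N+j-1}$ of the $(N-1)$-element variable set $\xx_{[N]\setminus\{l\}}^r$ to be zero.

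Finally, I would upgrade the resulting graded surjection $S_{N,k-1,k-j}[-r(N-k+j)] \twoheadrightarrow K_j$ to an isomorphism via a total-dimension count: combining Lemma~\ref{s-dimension-lemma-generalized} with the combinatorial fibration $\OP_{N,k,k-j} \to \bigsqcup_m \OP_{N,m}$ (obtained by stripping big letters and discarding any resulting singleton-big blocks) yields $|\OP_{N,k,s}| = \sum_m \binom{k-s}{k-m} |\OP_{N,m}|$, from which classical Pascal gives $\dim S_{N,k,k-j} - \dim S_{N,k,k-j+1} = \dim S_{N,k-1,k-j}$; a graded surjection between graded modules of equal total dimension is automatically an isomorphism. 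The main obstacle is the algebraic identity $x_l^{(k-1)r} e_{N-k+j}(\xx^r) \in J_{N,k,k-j}$ sketched above, since all remaining steps amount to combinatorial bookkeeping and the $q$-Pascal identity.
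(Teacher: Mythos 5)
Your proof is correct; each step --- the inclusion $x_\ell^{(k-1)r}e_{N-k+j}(\xx_N^r)\in J_{N,k,k-j}$, the induced surjection from $S_{N,k-1,k-j}$, the dimension count via the fibration of $\OP_{N,k,s}$ onto $\bigsqcup_m\OP_{N,m}$, and the $q$-Pascal consolidation in the inductive step --- checks out, the only slip being that some indices $N-k+j+a+1$ in your telescoping sum exceed $N$, in which case $e_d(\xx_N^r)$ simply vanishes rather than being one of the listed generators (the conclusion is unaffected). The paper itself disposes of this lemma by citing [HRS, Lem.~6.9--6.10] after the substitutions $x_i\mapsto x_i^r$ and $q\mapsto q^r$; your short-exact-sequence argument $0\to S_{N,k-1,s}\to S_{N,k,s}\to S_{N,k,s+1}\to 0$ together with the dimension count reconstructs exactly what those two lemmas accomplish, so the approach is essentially the same as the paper's.
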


\begin{proof}
This is proven using the same reasoning as in the proofs of \cite[Lem. 6.9, Lem. 6.10]{HRS};
one just makes the change of variables $(x_1, \dots, x_n) \mapsto (x_1^r, \dots, x_n^r)$
and $q \mapsto q^r$.
\end{proof}

We are ready to describe the graded isomorphism types of $S_{n,k}$ and $R_{n,k}$.

\begin{theorem}
\label{graded-isomorphism-type}
Let $n, k,$ and $r$ be positive integers with $n \geq k$ and $r \geq 2$.  
We have
\begin{equation}
\grFrob(S_{n,k}; q) = \bm{D_{n,k}}(\xx; q)
\end{equation}
and
\begin{equation}
\grFrob(R_{n,k}; q) = \sum_{z = 0}^{n-k} q^{krz} \cdot \bm{s}_{\varnothing, \dots, \varnothing, (z)}(\xx) \cdot
 \bm{D_{n-z,k}}(\xx; q).
\end{equation}
\end{theorem}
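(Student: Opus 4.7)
The plan is to prove the first identity $\grFrob(S_{n,k};q) = \bm{D_{n,k}}(\xx;q)$ by induction on $n$, and then deduce the $R_{n,k}$ identity by substituting the first into Proposition~\ref{r-to-s-reduction}.

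For the base case $n = k = 1$, I would observe that $S_{1,1} = \CC[x_1]/\langle x_1^r\rangle$, whose degree $c$ component is spanned by $x_1^c$ and carries the linear character $\zeta \mapsto \zeta^{-c}$ of $G_1 = \ZZ_r$. A direct calculation of $\grFrob(S_{1,1};q)$, together with the explicit expansion of $\bm{D_{1,1}}(\xx;q)$ obtained from $\bm{D_{n,k}}(\xx;q) = \omega\bm{I_{n,k}}(\xx;q)$, shows both equal $\sum_{i=1}^{r} q^{r-i}\bm{s}_{\varnothing,\dots,(1),\dots,\varnothing}(\xx)$, where $(1)$ occupies the $i$th slot.

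For the inductive step $n \geq 2$ I would assume the identity for all smaller values of $n$. Both $\grFrob(S_{n,k};q)$ and $\bm{D_{n,k}}(\xx;q)$ are homogeneous of $\xx$-degree $n$, so their pairings with $\bm{e_{\bm{\varnothing}}}(\xx)$ both vanish; by Lemma~\ref{e-perp-lemma}, it suffices to check $e_j(\xx^{(i^*)})^{\perp}\grFrob(S_{n,k};q) = e_j(\xx^{(i^*)})^{\perp}\bm{D_{n,k}}(\xx;q)$ for all $j \geq 1$ and $1 \leq i \leq r$. For $j > n$ both sides vanish by degree. For $1 \leq j \leq n$, Lemmas~\ref{e-perp-on-v} and \ref{tensor-isomorphism} give
\[
e_j(\xx^{(i^*)})^{\perp}\grFrob(S_{n,k};q) = q^{j(r-i)+r\binom{j}{2}}{k \brack j}_{q^r}\grFrob(S_{n-j,k,k-j};q),
\]
and Lemma~\ref{target-lemma} expands $\grFrob(S_{n-j,k,k-j};q)$ as $\sum_{m} q^{r(k-m)(n-j-m)}{j \brack k-m}_{q^r}\grFrob(S_{n-j,m};q)$. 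The resulting expression is identical, term-by-term, to the recursion for $e_j(\xx^{(i^*)})^{\perp}\bm{D_{n,k}}(\xx;q)$ in Lemma~\ref{d-under-e-perp}, and the inductive hypothesis converts each $\grFrob(S_{n-j,m};q)$ into $\bm{D_{n-j,m}}(\xx;q)$.

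The main technical obstacle is the careful bookkeeping at the boundary of the summation range, most notably when $j = n$, so that the recursion on the $\grFrob$ side involves the auxiliary object $S_{0,k,k-n}$ while the $\bm{D}$ side involves $\bm{D_{0,0}}(\xx;q)$; these degenerate cases must be interpreted consistently (with $S_{0,0,0} \cong \CC$ in degree $0$ and $\bm{D_{0,0}}(\xx;q) = 1$) to make the two sides agree. Once the first identity is established, substituting into Proposition~\ref{r-to-s-reduction} gives
\[
\grFrob(R_{n,k};q) = \sum_{z=0}^{n-k} q^{krz}\,\bm{s}_{\varnothing,\dots,\varnothing,(z)}(\xx)\,\bm{D_{n-z,k}}(\xx;q),
\]
which completes the proof.
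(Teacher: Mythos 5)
Your proof takes the same route as the paper's: show constant terms agree, reduce to checking $e_j(\xx^{(i^*)})^{\perp}$ of both sides agree via Lemma~\ref{e-perp-lemma}, compute the left side by combining Lemmas~\ref{e-perp-on-v}, \ref{tensor-isomorphism}, and \ref{target-lemma}, match it against the recursion for $\bm{D_{n,k}}(\xx;q)$ from Lemma~\ref{d-under-e-perp} using induction on $n$, and then obtain the $R_{n,k}$ identity from Proposition~\ref{r-to-s-reduction}. The paper's own argument is quite terse, and you usefully make the base case and the large-$j$ boundary behavior explicit, but the substance and the key lemmata invoked are identical.
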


When $k = n$, the graded Frobenius image of $R_{n,n} = S_{n,n}$ was calculated by 
Stembridge \cite{Stembridge}.

\begin{proof}
By Lemma~\ref{target-lemma} (and the discussion preceding it), Lemma~\ref{d-under-e-perp},
and induction, we see that 
\begin{equation}
e_j(\xx^{(i^*)})^{\perp}   \grFrob(S_{n,k}; q) =
e_j(\xx^{(i^*)})^{\perp}   \bm{D_{n,k}}(\xx; q)
\end{equation}
for all $j \geq 1$ and $1 \leq i \leq r$.  Lemma~\ref{e-perp-lemma} therefore gives the first statement.
The second statement is a consequence of Proposition~\ref{r-to-s-reduction}.
\end{proof}

\begin{example}
Theorem~\ref{graded-isomorphism-type} may be verified directly in the case $n = k = 1$.  We have 
$S_{1,1} = R_{1,1} = \CC[x_1]/\langle x_1^r \rangle$.  The group $G_1 \cong G = \langle \zeta \rangle$ acts on
$S_{1,1}$ by $\zeta.x_1^i = \zeta^{-i} x_1^i$ for $0 \leq i < r$.  Recalling our convention for the characters of the 
cyclic group $G$, we have
\begin{equation}
\grFrob(S_{1,1}; q) = \bm{s}_{\varnothing, \dots, \varnothing, (1)} \cdot q^0 + \cdots 
+ \bm{s}_{\varnothing, (1), \dots, \varnothing} \cdot q^{r-2} +
 \bm{s}_{(1), \varnothing, \dots, \varnothing} \cdot q^{r-1}.
\end{equation}

On the other hand, the elements of $\SYT^r(1)$ are the tableaux
\begin{equation*}
(\varnothing, \varnothing, \dots, \, \,
\begin{Young}
1
\end{Young} \,), \, \, \dots, \, \,
(\varnothing, \begin{Young} 1 \end{Young} \, , \dots \, \varnothing), 
(\begin{Young} 1 \end{Young} \, , \varnothing, \dots, \varnothing).
\end{equation*}
The major indices of these tableaux are (from left to right) $r-1, \dots, 1, 0$.  By Proposition~\ref{d-schur-expansion}
we have
\begin{equation}
\bm{D_{1,1}}(\xx;q) = \rev_q \left[ \bm{s}_{\varnothing, \dots, \varnothing, (1)} \cdot q^{r-1} + \cdots 
+ \bm{s}_{\varnothing, (1), \dots, \varnothing} \cdot q^{1} +
 \bm{s}_{(1), \varnothing, \dots, \varnothing} \cdot q^{0} \right],
\end{equation}
which agrees with Theorem~\ref{graded-isomorphism-type}.
\end{example}

\begin{example}
Let us consider Theorem~\ref{graded-isomorphism-type} in the case $(n,k,r) = (3,2,2)$.
By Proposition~\ref{d-schur-expansion}, the only elements of $\SYT^2(3)$ which contribute to 
$\bm{D_{3,2}}(\xx;q)$ are those with $\geq 1$ descent.

\begin{small}
\begin{equation*}
\begin{Young}
1 \\ 2 \\ 3 \\ \end{Young} \, , \, \varnothing  \hspace{0.3in}  \begin{Young} 1 & 2 \\ 3 \end{Young} \, , \, \varnothing  
\hspace{0.3in}
\begin{Young} 1 & 3 \\ 2 \end{Young} \, , \, \varnothing  \hspace{0.3in}  
\begin{Young}  1 \\ 2 \end{Young} \, , \, \begin{Young} 3 \end{Young}  \hspace{0.3in}
\begin{Young} 1 & 2 \end{Young} \, , \, \begin{Young} 3 \end{Young}  \hspace{0.3in}
\begin{Young} 1 \\ 3 \end{Young} \, , \, \begin{Young} 2 \end{Young}  \hspace{0.3in}
\begin{Young} 1 & 3 \end{Young} \, , \, \begin{Young} 2 \end{Young} \hspace{0.3in}
\begin{Young} 2 \\ 3  \end{Young} \,  , \, \begin{Young} 1 \end{Young}
\end{equation*}
\begin{equation*}
\begin{Young}  1 \end{Young} \, , \, \begin{Young} 2 \\ 3 \end{Young}  \hspace{0.3in}
\begin{Young} 1 \end{Young} \, , \, \begin{Young} 2 & 3 \end{Young} \hspace{0.3in}
\begin{Young} 2 \end{Young} \, , \, \begin{Young} 1 \\ 3 \end{Young} \hspace{0.3in}
\begin{Young} 2 \end{Young} \, , \, \begin{Young} 1 & 3 \end{Young} \hspace{0.3in}
\begin{Young} 3 \end{Young} \, , \, \begin{Young} 1 \\ 2 \end{Young} \hspace{0.3in}
\varnothing \, , \, \begin{Young}  1 & 2 \\ 3 \end{Young}  \hspace{0.3in}
\varnothing \, , \, \begin{Young} 1 & 3 \\ 2 \end{Young} \hspace{0.3in} 
\varnothing \, , \, \begin{Young} 1 \\ 2 \\ 3 \end{Young}
\end{equation*}
\end{small}
The major indices of these tableaux are (in matrix format)
$\begin{pmatrix}
6 & 4 & 2 & 7 & 5 & 3 & 3 & 5 \\
8 & 4 & 6 & 6 & 4 & 7 & 5 & 9
\end{pmatrix}$ while the  descent numbers are
$\begin{pmatrix}
2 & 1 & 1 & 2 & 1 & 1 & 1 & 1 \\ 
2 & 1 & 1 & 1 & 1 & 1 & 1 & 2
\end{pmatrix}$.  The statistic $\maj(\bT) + r {n-k \choose 2} - r(n-k) \des(\bT)$ appearing in the exponent 
in Proposition~\ref{d-schur-expansion} is therefore
$
\begin{pmatrix}
2 & 2 & 0 & 3 & 3 & 1 & 1 & 3 \\
4 & 2 & 4 & 4 & 2 & 5 & 3 & 5
\end{pmatrix}.
$
If we apply $\omega$ and multiply by ${\des(\bT) \brack n-k}_{q^r} = [\des(\bT)]_{q^2}$, we see that 
$\bm{D_{3,2}}(\xx;q)$ is the $q$-reversal of
\begin{multline}
\bm{s}_{(3), \varnothing} \cdot (q^2 + q^4) + \bm{s}_{(2,1), \varnothing} \cdot q^2 + 
\bm{s}_{(2,1), \varnothing} \cdot q^0 + \bm{s}_{(2), (1)} \cdot (q^3 + q^5) \\
+  \bm{s}_{(1,1), (1)} \cdot q^3 + \bm{s}_{(2), (1)} \cdot q^1 +
\bm{s}_{(1,1), (1)} \cdot q^1 + \bm{s}_{(2), (1)} \cdot q^3 \\
+ \bm{s}_{(1), (2)} \cdot (q^4 + q^6) + \bm{s}_{(1), (1,1)} \cdot q^2 +
\bm{s}_{(1), (2)} \cdot q^4 + \bm{s}_{(1), (1,1)} \cdot q^4 \\
+ \bm{s}_{(1), (2)} \cdot q^2 + \bm{s}_{\varnothing, (2,1)} \cdot q^5 +
\bm{s}_{\varnothing, (2,1)} \cdot q^3 + \bm{s}_{\varnothing, (3)} \cdot (q^5 + q^7).
\end{multline}
Collecting  powers of $q$ and applying $\rev_q$, the graded Frobenius image $\grFrob(S_{3,2}; q)$ is
\begin{multline}
\label{small-expression}
 \bm{s}_{\varnothing, (3)} \cdot q^0 + \bm{s}_{(1), (2)} \cdot q^1
+ (\bm{s}_{(2), (1)} + \bm{s}_{\varnothing, (2,1)} + \bm{s}_{\varnothing, (3)}) \cdot q^2 \\
+ (\bm{s}_{(3), \varnothing} + 2 \bm{s}_{(1), (2)} + \bm{s}_{(1), (1,1)}) 
\cdot q^3  +
(2 \bm{s}_{(2), (1)} + \bm{s}_{(1,1), (1)} + \bm{s}_{\varnothing, (2,1)}) \cdot q^4 \\
+ (\bm{s}_{(3), \varnothing} + \bm{s}_{(2,1), \varnothing} + \bm{s}_{(1), (1,1)} + \bm{s}_{(1), (2)})
\cdot q^5  +
(\bm{s}_{(2), (1)} + \bm{s}_{(1,1), (1)}) \cdot q^6 
+ \bm{s}_{(2,1), \varnothing} \cdot q^7.
\end{multline}

Let us calculate $\grFrob(R_{3,2}; q)$.
A shorter calculation (left to the reader) shows that $\bm{D_{2,2}}(\xx; q)$ is given by
\begin{equation}
\label{new-expression}
 \bm{s}_{\varnothing, (2)} \cdot q^0 + \bm{s}_{(1), (1)} \cdot q^1 
+ (\bm{s}_{(2), \varnothing} + \bm{s}_{\varnothing, (1,1)}) \cdot q^2 +
\bm{s}_{(1), (1)} \cdot q^3 + \bm{s}_{(1,1), \varnothing} \cdot q^4.
\end{equation}
By Theorem~\ref{graded-isomorphism-type}, the Frobenius image $\grFrob(R_{3,2}; q)$ is given
by adding the product of (\ref{new-expression}) and $\bm{s}_{(\varnothing, (1))}(\xx) \cdot q^4$ to 
(\ref{small-expression}).  Applying the Pieri rule we see that the Schur expansion of 
$\grFrob(R_{3,2}; q)$ is
\begin{multline}
\text{\rm{(expression in (}\ref{small-expression}))} \,  + 
(\bm{s}_{\varnothing, (3)} + \bm{s}_{\varnothing, (2,1)}) \cdot q^4 +
(\bm{s}_{(1), (2)} + \bm{s}_{(1), (1,1)}) \cdot q^5  \\
+ (\bm{s}_{(2), (1)} + \bm{s}_{\varnothing, (2,1)} + \bm{s}_{\varnothing, (1,1,1)}) \cdot q^6 +
(\bm{s}_{(1), (2)} + \bm{s}_{(1), (1,1)}) \cdot q^7 
+ \bm{s}_{(1,1), (1)} \cdot q^8.
\end{multline} 

\end{example}

\section{Conclusion}
\label{Conclusion}

In this paper we introduced a quotient $R_{n,k}$ of the polynomial ring $\CC[\xx_n]$ whose structure 
is governed by the combinatorics of the set of $k$-dimensional faces $\FFF_{n,k}$ in the Coxeter complex 
attached to $G_n$, where $G_n = \ZZ_r \wr \symm_n$ is a wreath product.

\begin{problem}
\label{reflection-group-generalization}
Let $W \subset GL_n(\CC)$ be a complex reflection group and let $0 \leq k \leq n$.  Find a graded $W$-module
$R_{W,k}$ which generalizes $R_{n,k}$.
\end{problem}

The quotient $R_{W,k}$ in Problem~\ref{reflection-group-generalization} should have combinatorics governed
by the $k$-dimensional faces $\FFF_{W,k}$ of some Coxeter complex-like object attached to $W$.
A natural collection of groups $W$ to look at is the $G(r,p,n)$ family of reflection groups.  Recall that, for positive
integers $r, p, n$ with $p \mid r$, the group $G(r,p,n)$ is defined by
\begin{equation}
G(r,p,n) := \{ g \in G_n \,:\, \text{the product of the nonzero entries in $g$ is a $(r/p)^{th}$ root of unity} \}.
\end{equation}
It is well known that the $G(r,p,n)$-invariant polynomials $\CC[\xx_n]^{G(r,p,n)}$ have algebraically independent
generators $e_1(\xx_n^r), e_2(\xx_n^r), \dots, e_{n-1}(\xx_n^r),$ and $(x_1 \cdots x_n)^{r/p}$.
However, even in the case of $G(2,2,n)$ which is isomorphic to the real reflection group of type $D_n$,
 the authors have been unable to construct a quotient of $\CC[\xx_n]$ which carries an action of 
 $G(2,2,n)$ whose dimension is given by the number of $k$-dimensional faces in the $D_n$-Coxeter complex.
 
 If $W$ is any {\em real} reflection group and $\mathbb{F}$ is any field, there is an $\mathbb{F}$-algebra
 $H_W(0)$ of dimension $|W|$ called the {\em 0-Hecke algebra} attached to $W$.
 When $W$ is the symmetric group $\symm_n$, 
there is an action of $H_W(0)$ on the polynomial ring $\mathbb{F}[\xx_n]$ given by the 
isobaric Demazure operators (see \cite{HuangRhoades}).  When $W = \symm_n$, Huang and Rhoades 
proved that the ideal
\begin{equation}
\langle h_k(x_1), h_k(x_1, x_2), \dots, h_k(x_1, x_2, \dots, x_n), e_n(\xx_n), e_{n-1}(\xx_n), \dots, e_{n-k+1}(\xx_n) \rangle
\subseteq \mathbb{F}[\xx_n]
\end{equation}
is stable under this action, and that the corresponding quotient of $\mathbb{F}[\xx_n]$ gives a graded version of 
a natural action of $H_{\symm_n}(0)$ on $k$-block ordered set partitions of $[n]$.
This suggests the following problem.

\begin{problem}
\label{zero-hecke-problem}
Let $W$ be a real reflection group of rank $n$, let $H_W(0)$ be the 0-Hecke algebra attached to $W$, and let 
$0 \leq k \leq n$.  Describe a natural action of $W$ on the set of $k$-dimensional faces in the Coxeter complex of $W$.
Give a graded this action as a $W$-stable quotient of $\mathbb{F}[\xx_n]$.
\end{problem}

Another possible direction for future research is motivated by the Delta Conjecture and the {\em Parking Conjecture}
of Armstrong, Reiner, and Rhoades \cite{ARR}.  Let $W$ be an irreducible real reflection group with reflection
representation $V$ and Coxeter number $h$, and consider a homogeneous system of parameters 
$\theta_1, \dots, \theta_n \in \CC[V]_{h+1}$ of degree $h+1$ carrying the dual $V^*$ of the reflection 
representation.  Armstrong et. al. introduce an inhomogeneous deformation $(\Theta - \xx)$ of the ideal
$(\Theta) = (\theta_1, \dots, \theta_n) \subseteq \CC[V]$ generated by the $\theta_i$ and conjecture a 
relationship between the quotient $\CC[V]/(\Theta - \xx)$ and $(W \times \ZZ_h)$-set $\mathsf{Park}^{NC}_W$
of `$W$-noncrossing parking functions' defined via Coxeter-Catalan theory.

When $W = \symm_n$ is the symmetric group,
the `classical' h.s.o.p. quotient $\CC[V]/(\Theta)$ is known to have graded Frobenius image given by 
(the image under $\omega$ of, after a $q$-shift) the Delta conjecture in the case $k = n$ at the specialization $t = 1/q$.
In \cite[Prob. 7.8]{HRS} the problem was posed of finding a `$k \leq n$' extension of the Parking Conjecture
for any real reflection group $W$.  The authors are hopeful that the quotients studied in this paper
will be helpful in this endeavor.

\section{Acknowledgements}
\label{Acknowledgements}

B. Rhoades was partially supported by NSF Grant DMS-1500838.
This work was performed as an REU at UCSD which was supported by NSF Grant DMS-1500838.

\end{document}